\newtheorem{thm}{Theorem}[section]
\newtheorem{Def}{Definition}
\numberwithin{Def}{section}
\newtheorem{lem}[thm]{Lemma}
\newtheorem{prop}[thm]{Proposition}
\theoremstyle{definition}
\newtheorem{rem}[thm]{Remark}
\numberwithin{equation}{section}
\numberwithin{figure}{section}
\def\Hom{{\text{\rm{Hom}}}}
\def\End{{\text{\rm{End}}}}
\def\tr{{\text{\rm{tr}}}}
\def\rchi{{\hbox{\raise1.5pt\hbox{$\chi$}}}}
\def\rgam{{\hbox{\raise1.5pt\hbox{$\gamma$}}}}
\def\isom{\cong}
\def\tensor{\otimes}
\def\dsum{\oplus}
\def\Ker{{\text{\rm{Ker}}}}
\def\Coker{{\text{\rm{Coker}}}}
\def\index{{\text{\rm{index}}}}
\def\reg{{\text{\rm{reg}}}}
\def\res{{\text{\rm{res}}}}
\def\lam{\lambda}
\def\rank{{\text{\rm{rank}}\,}}
\def\Sym{{\text{\rm{Sym}}}}
\def\Spec{{\text{\rm{Spec}}}}
\def\Jac{{\text{\rm{Jac}}}}
\def\Pic{{\text{\rm{Pic}}}}
\def\Prym{{\text{\rm{Prym}}}}
\def\Nm{{\text{\rm{Nm}}}}
\def\Res{{\text{\rm{Res}}}}
\title[Hitchin systems, spectral curves, and KP equations]{Hitchin integrable systems, deformations of spectral curves, and KP-type equations}
\author[Andrew Hodge]{Andrew R. Hodge$^1$}  
\address{
(Address after September 2008)
National Security Agency\\
Fort George G.~Meade, MD 20755-6000}
\thanks{$^1$Research supported by NSF grants
DMS-0135345 (VIGRE) and DMS-0406077
while at UC Davis.}
\author[Motohico Mulase]{Motohico Mulase$^2$}  
\address{
Department of Mathematics\\
University of California\\
Davis, CA 95616--8633}
\email{mulase@math.ucdavis.edu}
\thanks{$^2$Research supported by NSF grant DMS-0406077 and UC Davis.}
\subjclass[2000]{14H60, 14H70, 35Q53, 37J35}
\keywords{Hitchin Integrable System, Spectral Curve,
Higgs Bundle, Sato Grassmannian, Symplectic 
KP Equation}
\begin{document}

\begin{abstract}
An effective family of spectral curves appearing in
Hitchin fibrations is determined. Using this family
the moduli spaces of stable Higgs bundles on an
algebraic curve are embedded into
the Sato Grassmannian. We show that
the Hitchin integrable system,
the natural algebraically completely integrable 
Hamiltonian system defined
on the Higgs moduli space, coincides with the 
KP equations. It is shown that the Serre duality
on these moduli spaces corresponds to the formal adjoint
of pseudo-differential operators acting on the 
Grassmannian. From this fact we then identify 
the Hitchin integrable system on the moduli 
space of $Sp_{2m}$-Higgs bundles in terms of a reduction 
of the KP
equations. We also show that the dual Abelian fibration
(the SYZ mirror dual) to the $Sp_{2m}$-Higgs
moduli space
is constructed by  taking the symplectic quotient 
of a Lie \emph{algebra} action on the moduli space of $GL$-Higgs
bundles.
\end{abstract}

\maketitle

\allowdisplaybreaks

%\tableofcontents

\section{Introduction}
\label{sect:intro}

The purpose of this paper is to determine the 
relation between the KP-type equations defined
on the Sato Grassmannians and the Hitchin
integrable systems defined on the moduli spaces
of stable Higgs bundles.
The results established are the following:
\begin{enumerate}
\item 
We  determine the \emph{effective}
family of spectral curves appearing in the
Hitchin fibration of the moduli spaces of 
stable Higgs bundles. 
\item
We embed the effective family of Jacobian varieties
of the spectral curves into the Sato Grassmannian and
show that the KP flows are tangent to each fiber
of the Hitchin fibration. 
\item
The moduli space of Higgs bundles of rank $n$ and
degree $n(g-1)$ on an algebraic curve of genus $g\ge 2$
is embedded into the \emph{relative}
Grassmannian of \cite{AMP, DM, Q}. Using this embedding
we show that the Hitchin integrable system is exactly the
restriction of the KP equations on the Grassmannian to the 
image of this embedding.
\item
It is shown that the Krichever construction transforms 
the Serre duality of the geometric data consisting of
algebraic curves and vector bundles on them to
the formal adjoint of pseudo-differential operators 
acting on the Grassmannian.
By identifying the fixed-point-set of the Serre duality and
the formal adjoint operation we determine the KP-type
equations that are equivalent to the Hitchin integrable 
system defined on the moduli space of $Sp_{2m}$-Higgs 
bundles.
\item
There are two ways to \emph{reduce}
an algebraically completely integrable Hamiltonian system:
one by restriction and the other by taking a quotient of a Lie 
algebra action that is similar to the symplectic quotient. 
When applied to the moduli spaces of Higgs bundles,
these constructions yield
SYZ-mirror pairs. We interpret
the $SL$-$PGL$ and $Sp_{2m}$-$SO_{2m+1}$ dualities
in this way.
\end{enumerate}

Let $\mathcal{G}_\mathbb{C}$ be the category of 
complex Lie groups, and $\mathcal{CY}$ the category
of Calabi-Yau spaces. 
For  a compact oriented surface $\Sigma$ 
of genus $g\ge 2$, 
the functor
\begin{equation*}
\Hom(\hat{\pi}_1(\Sigma), \;\cdot\;)/\!\!/\;\cdot\;:
\mathcal{G}_\mathbb{C}\longrightarrow
\mathcal{CY}
\end{equation*}
assigns to each complex Lie group $G$ its 
character variety
$$\Hom\big(\hat{\pi}_1(\Sigma), G\big)/\!\!/G, 
$$
where $\hat{\pi}_1(\Sigma)$ is the central 
extension of the fundamental group of $\Sigma$. The quotient 
by conjugation is the geometric invariant theory
quotient of Mumford \cite{MFK}. An amazing discovery
of Hausel and Thaddeus \cite{HT}, and its generalizations 
by \cite{DP, KW} and others, is that 
the character variety functor transforms the Langlands duality
in $\mathcal{G}_\mathbb{C}$ to the
mirror symmetry of Calabi-Yau spaces in the sense of 
Strominger-Yau-Zaslow \cite{SYZ}:
\begin{equation*}
\begin{CD}
\mathcal{G}_\mathbb{C} @>{
\Hom(\hat{\pi}_1(\Sigma), \;\cdot\;)/\!\!/\;\cdot\;}>>
\mathcal{CY}\\
@V{\text{Langlands Dual}}VV @VV{\text{Mirror Dual}}V\\
\mathcal{G}_\mathbb{C} @>>{
\Hom(\hat{\pi}_1(\Sigma), \;\cdot\;)/\!\!/\;\cdot\;}>
\mathcal{CY}
\end{CD}
\end{equation*}
The character
variety 
$\Hom\big(\hat{\pi}_1(\Sigma), G\big)/\!\!/G$
has many distinct complex structures
\cite{HT,KW}. To understand the SYZ mirror symmetry among 
the character varieties,
it is most convenient 
to realize them as \emph{Hitchin integrable systems}.
In his seminal papers \cite{H1,H2}, Hitchin identifies the character
variety 
with  the 
moduli space of stable $G$-\emph{Higgs bundles}, 
which has the structure
of an 
\emph{algebraically completely integrable Hamiltonian system}.

An algebraically completely integrable Hamiltonian system 
\cite{DM, V} is 
a holomorphic symplectic manifold $(X,\omega)$ 
of dimension $2N$ together with
a holomorphic map $H:X\rightarrow 
\mathfrak{g}^*$ such that
\begin{enumerate}
\item a general fiber $H^{-1}(s)$, $s\in \mathfrak{g}^*$, is 
an Abelian variety of dimension $N$, 
\item $\mathfrak{g}^*$ is the dual Lie algebra of a general
fiber $H^{-1}(s)$ considered as a Lie group, and
\item the coordinate components of the map 
$H$ are Poisson commutative
with respect to the symplectic structure $\omega$.
\end{enumerate}

The notion corresponding
to an algebraically completely integrable Hamiltonian system
 in \emph{real} symplectic geometry
 is the cotangent bundle of 
a torus. The procedure of symplectic quotient
is to remove the effect of this cotangent bundle
from a given symplectic manifold. 
In the holomorphic context, it is often useful to 
take the quotient by a \emph{family} of groups
that have the same Lie algebra.
Suppose we have a Lie algebra direct sum decomposition
$\mathfrak{g}=\mathfrak{g}_1\dsum \mathfrak{g}_2$.
If  $\mathfrak{g}_1$-action on $X$ is integrable to a group
$G_{1,s}$-action in each fiber $H^{-1}(s)$
for $s\in \mathfrak{g}_2 ^*$, then we can define 
a \emph{quotient}
$X/\!\!/\mathfrak{g}_1$ as the family of
quotients $H^{-1}(s)/G_{1,s}$ over $\mathfrak{g}_2 ^*$.
We can also construct a \emph{reduction} of
$(X,\omega,H)$ by restricting the fibration to $\mathfrak{g}_2 ^*$
and considering the family of $\mathfrak{g}_2$-orbits in $H^{-1}(s)$, 
if the $\mathfrak{g}_2$-action is integrated to a group action over
$\mathfrak{g}_2 ^*$.
When applied to the moduli space of Higgs bundles, these two
constructions yield Abelian fibrations that are dual to one another,
producing an SYZ mirror pair.
We examine these constructions for the $SL$-$PGL$ and
$Sp_{2m}$-$SO_{2m+1}$ dualities.

From the results established in \cite{DM, LM2, M1984, M1990},
we know that linear integrable evolution equations on the
Jacobians or Prym varieties are realized as the restriction of
KP-type equations defined on the Sato Grassmannians
through a generalization of Krichever construction.
Since the Hitchin integrable systems are defined on a
\emph{family} of Jacobian varieties  or Prym varieties, we need to 
embed the whole family into the Sato Grassmannian 
to compare the Hitchin systems and the KP equations.
To deal with families, we use two different approaches
in this article.
One approach is to utilize the theory of Sato Grassmannians
defined over an arbitrary scheme developed in 
 \cite{AMP, DM, PM, Q}. In this way we can directly compare
the integrable Hamiltonian systems on the Higgs moduli spaces
and the KP equations. 
The other approach is to examine the deformations of spectral
curves that appear in the Hitchin Hamiltonian systems. 
Once we identify the effective family of spectral curves,
we can embed the whole family into a single Sato
Grassmannian over $\mathbb{C}$, using the method
developed in \cite{LM2}.

The second approach has an unexpected application: we can
identify the effect of Serre duality operation on
the algebro-geometric data in terms of the language of Grassmannians.
Note that Sato Grassmannians are constructed from 
pseudo-differential operators \cite{SS, SW}. We will show,
using Abel's theorem,
that the Serre duality
is simply the formal adjoint operation on the pseudo-differential
operators. 
Since the $Sp$-Hitchin system is the
fixed-point-set  of the Serre duality on the $GL$-Hitchin system,
we can determine the integrable equations corresponding to 
the $Sp$ case
as a reduction of the KP equations on the
fixed-point-set  of the formal adjoint action on pseudo-differential
operators. Since our formal adjoint is slightly different from
what has been studied in the literature \cite{JM, S2, T},
the equations coming up for the symplectic 
groups are \emph{not} BKP or CKP equations. Let
$$
P = \sum a_i(x) \left(\frac{\partial}{\partial x}\right)^i
$$
be a formal pseudo-differential operator, where 
$a_i(x)$ is a matrix valued functions. We define the \emph{formal
adjoint} by
$$
P^* = \sum \left(\frac{\partial}{\partial x}\right)^i \cdot a_i(-x)^t .
$$
The reduction of the KP equations that corresponds to the 
$Sp$-Hitchin system is the $2m$-component KP equations
that preserve the algebraic condition
\begin{equation}
\label{eq:SpLaxcondition}
\mathbf{L}^* = 
\begin{bmatrix}
&I_m\\
I_m&\\
\end{bmatrix}
\cdot
\mathbf{L}
\cdot 
\begin{bmatrix}
&I_m\\
I_m&\\
\end{bmatrix}
\end{equation}
for a $2m\times 2m$ matrix Lax operator $\mathbf{L}$ with 
the leading
term $I_{2m}\cdot \partial/\partial x$.
Several authors  have proposed integrable systems with
$Sp_{2m}$-symmetry (cf. \cite{TT}). It would be 
interesting to study  our reduction 
(\ref{eq:SpLaxcondition}) from the point of view
of integrable systems and to investigate the relation with  the
other $Sp$ integrable 
systems.

The fundamental literature of the algebro-geometric
study of the Hitchin 
integrable systems and related topics
is the book \cite{DM} by Donagi and Markman. 
Our present paper employs a slightly different perspective,
that leads to the discovery of the KP-type equations
corresponding to the $Sp$ Hitchin system.

The relation between
the Hitchin integrable systems and the KP equations was
also studied in \cite{LM1}. The treatment there was
limited to the study of the Hitchin system on a single fiber.
The present article extends the result therein. 

We also note that many topics of this paper have been 
studied by the Salamanca school of algebraic geometers
from yet another
 point of view \cite{AMP, GGMPPM, HSMPPM, PM}.

The paper is organized as follows.
The first section is devoted to reviewing 
the Hitchin integrable systems of \cite{BNR, H2}.
We then determine an effective family of spectral curves
in Section~\ref{sect:deformation}. 
Section~\ref{sect:quotient} is devoted to giving two constructions
of reduced integrable systems from a Hitchin system:
one is a straightforward specialization, and the other is
a kind of symplectic reduction by a Lie subalgebra.
These two constructions
give rise to an Abelian fibration and its dual Abelian fibration.
We show that the $GL$-Hitchin integrable system is equivalent
to the KP equations in Section~\ref{sect:KP}. 
The identification of the Serre duality in terms of 
Grassmannians and pseudo-differential operators is carried out
in Section~\ref{sect:Serre}. Finally we determine the 
KP-type equations for the $Sp$ Hitchin system.

\section{Hitchin integrable systems}
\label{sect:Hitchin}

In this section we review the algebraically completely
integrable Hamiltonian systems defined on the moduli
spaces of Higgs bundles, following \cite{BNR, DM, H1, H2}.

Throughout the paper we denote by $C$
 a non-singular algebraic curve  of genus
$g\ge 2$. The moduli space of semistable algebraic 
vector bundles on $C$ of rank $n$ and degree $d$ is denoted
by $\mathcal{U}_C(n,d)$. 
When $n$ and $d$ are relatively 
prime,  a semistable bundle is automatically stable, and the moduli
space is a non-singular projective algebraic variety of 
dimension $n^2(g-1) +1$. We denote by
\begin{equation}
\label{eq:UCn}
\mathcal{U}_C(n) = \coprod_{d\in\mathbb{Z}} \mathcal{U}_C(n,d)
\end{equation}
the space of all stable vector bundles.
A \emph{Higgs bundle} is a pair $(E,\phi)$ consisting of an
algebraic vector bundle $E$ on $C$ and a global section
\begin{equation}
\label{eq:phi}
\phi\in H^0(C,\End(E)\tensor K_C)
\end{equation}
of the endomorphism sheaf of $E$ twisted by the canonical sheaf
$K_C$ of $C$. A Higgs bundle  is  \emph{stable} if 
$
\frac{\deg F}{\rank F}<\frac{\deg E}{\rank E}
$
for every $\phi$-invariant proper holomorphic vector subbundle $F$.
An endomorphism of a Higgs bundle $(E,\phi)$ is 
 an endomorphism $\psi$ of $E$ that commutes with $\phi$:
\begin{equation*}
\begin{CD}
E @>\psi>> E\\
@V{\phi}VV @VV{\phi}V\\
E\tensor K_C@>>{\psi\tensor 1}> E\tensor K_C 
\end{CD}
\end{equation*}
It is known that 
$H^0(C,\End(E,\phi)) = \mathbb{C}$
for a stable Higgs bundle, and one can define the moduli space
of stable objects.
We denote by $\mathcal{H}_C(n,d)$ the moduli space of 
stable Higgs bundles of rank $n$ and degree $d$ on
$C$, and
\begin{equation}
\label{eq:HCn}
\mathcal{H}_C(n) = \coprod_{d\in \mathbb{Z}}\mathcal{H}_C(n,d).
\end{equation}
Note that the \emph{Serre duality} induces an involution on 
$\mathcal{H}_C(n)$ defined by
\begin{equation}
\label{eq:serreduality}
\mathcal{H}_C(n,d)\owns (E,\phi) \longmapsto
(E^*\tensor K_C,-\phi^*)\in \mathcal{H}_C(n,-d+2n(g-1)) .
\end{equation}
 The dual of the
Higgs field
$\phi: E\rightarrow E\tensor K_C$ is a homomorphism
$\phi^*:E^*\tensor K_C ^{-1}\rightarrow E^*$. We use the same notation
for  the homomorphism 
$E^*\tensor K_C\rightarrow E^*\tensor K_C ^{\tensor 2}$
induced by $\phi^*$.

If $E$ is stable, then $(E,\phi)$ is stable for any
$\phi$ of (\ref{eq:phi}). And if $\phi=0$, then the stability of $(E,\phi)$
simply means $E$ is stable. Therefore, 
the Higgs moduli space contains the total space of the
holomorphic
cotangent bundle
\begin{equation}
\label{eq:cotangentinH}
T^* \mathcal{U}_C(n,d)\subset \mathcal{H}_C(n,d),
\end{equation}
since
$$
H^0(C,\End(E)\tensor K_C)
\isom H^1(C,\End(E))^* \isom T^* _E\, \mathcal{U}_C(n,d).
$$
   Note that
 $p^* \Lambda^1 (\mathcal{U}_C(n,d))
\subset \Lambda^1(T^*\mathcal{U}_C(n,d))$
has a tautological section 
\begin{equation}
\label{eq:eta}
\eta\in H^0(T^*\mathcal{U}_C(n,d), 
p^* \Lambda^1 (\mathcal{U}_C(n,d))),
\end{equation}
where $
p:T^*\mathcal{U}_C(n,d)\rightarrow 
\mathcal{U}_C(n,d)
$ is the projection, and 
 $\Lambda^r$ denotes  the sheaf of
holomorphic $r$-forms. 
The differential $\omega = - d\eta$ of the 
tautological section defines the canonical holomorphic symplectic
form  on $T^*\mathcal{U}_C(n,d)$. 
The restriction of $\omega$ on $\mathcal{U}_C(n,d)$,
which is the $0$-section of the cotangent bundle, is identically
$0$. Therefore the $0$-section is a Lagrangian submanifold
of this holomorphic symplectic space.

Let us denote by
\begin{equation}
\label{eq:baseGL}
V^* _{GL} = V^* _{GL_n(\mathbb{C})} = 
\bigoplus_{i=1} ^ n 
H^0(C,K_C ^{\tensor i}).
\end{equation}
As a vector space $V^* _{GL}$ has the same 
dimension of
$
H^0(C,\End(E)\tensor K_C) 
$.
The Higgs field $\phi: E \rightarrow E\tensor K_C$
induces a homomorphism of the $i$-th anti-symmetric 
tensor product spaces
$$
\wedge^i (\phi): \wedge^i (E) \longrightarrow 
\wedge^i (E\tensor K_C) = \wedge^i(E) \tensor K_C ^{\tensor i},
$$
or equivalently $\wedge^i (\phi)\in
H^0(C,\End(\wedge^i(E))\tensor K_C ^{\tensor i})$.
Taking its natural trace, we obtain
$$
\tr \wedge^i(\phi) \in H^0(C,K_C ^{\tensor i}).
$$
This is exactly the $i$-th characteristic coefficient of
the twisted endomorphism $\phi$:
\begin{equation}
\label{eq:characteristic}
\det (x - \phi) = x^n + \sum_{i=1} ^n (-1)^i \tr \wedge^i(\phi) \cdot
x^{n-i}.
\end{equation}
By assigning the coefficients of (\ref{eq:characteristic}),
 Hitchin \cite{H2} defines a
holomorphic map, now known as the \emph{Hitchin fibration}
or \emph{Hitchin map},
\begin{equation}
\label{eq:Hitchinmap}
H: \mathcal{H}_C (n,d)\owns (E,\phi)\longmapsto
\det(x-\phi)\in \bigoplus_{i=1} ^ n 
H^0(C,K_C ^{\tensor i}) = V^* _{GL}.
\end{equation}
The map $H$ to the vector space $V^* _{GL}$ is a collection
of $N= n^2(g-1)+1$ globally defined holomorphic
functions on $\mathcal{H}_C (n,d)$. The  $0$-fiber of the Hitchin fibration is the moduli space $\mathcal{U}_C(n,d)$.

To determine the generic fiber of $H$, 
the notion of \emph{spectral curves} is introduced in 
\cite{H2}. 
 The total space of the canonical sheaf $K_C = \Lambda^1(C)$
on $C$ is the cotangent bundle $T^*C$. Let 
\begin{equation}
\label{eq:pi}
\pi:T^*C \longrightarrow C
\end{equation}
be the projection, and 
$$
\tau\in H^0(T^*C, \pi^*K_C)\subset H^0(T^*C,\Lambda^1(T^*C))
$$
be the tautological section of $\pi^*K_C$ on $T^*C$. Here again
$\omega_{T^*C} = -d\tau$ is the holomorphic symplectic
form on $T^*C$. The tautological section $\tau$ satisfies
that $\sigma^*\tau = \sigma$
for every section $\sigma \in H^0(C,K_C)$ viewed as a holomorphic
map $\sigma:C\rightarrow T^*C$. The characteristic coefficients 
(\ref{eq:characteristic})
of $\phi$ 
 give a section
\begin{equation}
\label{eq:chareq}
s=\det(\tau-\phi) = \tau ^{\tensor n} + 
\sum_{i=1} ^n (-1)^{i}
\tr \wedge^i(\phi)\cdot \tau^{\tensor n-1}
\in H^0(T^*C,\pi^* K_C ^{\tensor n}).
\end{equation}
We define the spectral curve $C_s$ associated with a Higgs pair
$(E,\phi)$ as the divisor of $0$-points of the section $s=\det(\tau-\phi)$ of
the line bundle $\pi^*K_C ^{\tensor n}$:
\begin{equation}
\label{eq:spectralcurve}
C_s = (s)_0 \subset T^*C .
\end{equation}
 The projection $\pi$ of (\ref{eq:pi}) defines
a ramified covering map 
$
\pi:C_s\rightarrow C
$ of degree $n$.

There is yet
another construction  of the spectral curve $C_s$. 
Since the section $s=\det(\tau-\phi)$ is
determined by the characteristic coefficients of $\phi$, by
abuse of notation we identify $s$ as an element of $V^* _{GL}$:
\begin{multline*}
s = (s_1,s_2,\dots, s_n) = (-\tr\, \phi,
\tr \wedge^2 (\phi), \dots, (-1)^n \tr \wedge^n(\phi))\\
\in \bigoplus_{i=1} ^n H^0(C,K_C ^{\tensor i}).
\end{multline*}
It defines an $\mathcal{O}_C$-module 
$(s_1+s_2+\cdots +s_n)\cdot K_C ^{\tensor -n}$. Let 
$\mathcal{I}_s$ denote  the ideal generated by this module
in the symmetric tensor algebra $\Sym(K_C ^{-1})$. Since
$K_C ^{-1}$ is the sheaf of linear functions on $T^*C$, the scheme
associated to this tensor algebra is 
$\Spec \big(\Sym (K_C ^{-1}) \big)= T^*C$. The 
spectral curve as the divisor 
of $0$-points of $s$ is then defined by
\begin{equation}
\label{eq:spectralcurve2}
C_s = \Spec\left(\frac{\Sym(K_C ^{-1})}{\mathcal{I}_s}\right)
\subset \Spec \big(\Sym (K_C ^{-1}) \big)= T^*C .
\end{equation}
We denote by $U_\reg$ the set
 consisting of points $s$ for which $C_s$ is  
 non-singular. It
is an open dense subset
of $V^* _{GL}$ \cite{BNR}. We note that since $\deg(K_C) = 2g-2>0$, 
every divisor of $T^*C$ intersects with the $0$-section $C$.
Therefore, if $C_s$ is non-singular, then it has only one component,
and is therefore irreducible. 
The genus of $C_s$ is
$g(C_s) = n^2(g-1) + 1$, which follows from an isomorphism
$$
\pi_* \mathcal{O}_{C_s} = \Sym (K_C ^{-1})/\mathcal{I}_s
\isom \bigoplus_{i=0} ^{n-1} K_C^{\tensor -i}
$$ 
as an $\mathcal{O}_C$-module.

The Higgs field $\phi\in H^0(C,\End(E)\tensor K_C)$ gives
a homomorphism 
$$
\varphi: K_C ^{-1} \longrightarrow \End(E),
$$
which induces an algebra homomorphism, still denoted by the
same letter,
$$
\varphi : \Sym (K_C ^{-1}) \longrightarrow \End(E).
$$
Since $s\in V^* _{GL}$ is the characteristic coefficients of $\varphi$,
by the Cayley-Hamilton theorem,
 the homomorphism $\varphi$ factors through
$$
\Sym (K_C ^{-1}) \longrightarrow \Sym (K_C ^{-1})/\mathcal{I}_s
\longrightarrow
\End(E).
$$
Hence $E$ is a module over 
$\Sym (K_C ^{-1})/\mathcal{I}_s$ of rank $1$. The rank is 
$1$ because
the ranks of $E$ and
 $\Sym (K_C ^{-1})/\mathcal{I}_s$
 are the same as $\mathcal{O}_C$-modules. 
In this way a Higgs pair $(E,\phi)$ gives rise to a line bundle
$\mathcal{L}_E$ on the spectral curve $C_s$, if it is nonsingular.
Since $\mathcal{L}_E$ being an $\mathcal{O}_{C_s}$-module
is equivalent to $E$ being a 
$\Sym (K_C ^{-1})/\mathcal{I}_s$-module, we recover $E$
from $\mathcal{L}_E$ simply by
$E = \pi_* \mathcal{L}_E$, which has rank $n$ because $\pi$ is a covering
of degree $n$. From the equality 
$\rchi(C,E) = \rchi(C_s,\mathcal{L}_E)$ and Riemann-Roch, we find
that $\deg \mathcal{L}_E = d+ n(n-1) (g-1)$.
To summarize, the above construction defines an inclusion
map
$$
H^{-1}(s) \subset  \Pic^{d+n(n-1)(g-1)}(C_s) \isom \Jac(C_s),
$$
if $C_s$ is  non-singular.

Conversely,
suppose we have a line bundle $\mathcal{L}$ of degree $d+n(n-1)(g-1)$
on a  non-singular spectral curve 
$C_s$. Then $\pi_*\mathcal{L}$ is a module over
$\pi_*\mathcal{O}_{C_s}
=\Sym (K_C ^{-1})/\mathcal{I}_s$, which
defines a homomorphism $K_C^{-1}\rightarrow
\End(\pi_*\mathcal{L})$, or equivalently,
$\psi:\pi_*\mathcal{L}\rightarrow
\pi_*\mathcal{L}\tensor K_C$. It is easy to 
see that the Higgs pair $(\pi_*\mathcal{L},\psi)$ is stable. Suppose
we had a $\psi$-invariant subbundle $F\subset \pi_*\mathcal{L}$
of rank $k<n$. Since $(F,\psi|_F)$ is a Higgs pair,
it gives rise to a spectral curve $C_{s'}$ that is a degree $k$ covering
of $C$. Note that the characteristic polynomial
$s' = \det(x-\psi)$ is a factor of the full characteristic polynomial
$s=\det(x-\phi)$. 
Therefore, $C_{s'}$ is a component of $C_s$, 
which contradicts to our assumption that
$C_s$ is irreducible.
Therefore, $\pi_*\mathcal{L}$ has no $\psi$-invariant proper subbundle. 
Thus we have established that
\begin{equation}
\label{eq:Hitchinfiber}
H^{-1}(s)= \Pic^{d+n(n-1)(g-1)}(C_s) \isom \Jac(C_s), \qquad s\in U_\reg\subset V^* _{GL}.
\end{equation}
The vector bundle $\pi_*\mathcal{L}$ itself is not necessarily 
stable. 
It is proved in \cite{BNR} that the locus of $\mathcal{L}$ in
$\Pic^{d+n(n-1)(g-1)}(C_s)$ that gives unstable
$\pi_*\mathcal{L}$ has codimension two or more.

Recall that  the tautological section
$$\eta\in H^0(T^*\mathcal{U}_C(n,d),p^*\Lambda^1(
\mathcal{U}_C(n,d)))$$ is a  holomorphic
$1$-form on $T^*\mathcal{U}_C(n,d)
\subset \mathcal{H}_C(n,d)$. Its restriction to the fiber $H^{-1}(s)$
of  $s\in U_\reg$ for which $C_s$ is non-singular
extends to a holomorphic $1$-form on the whole 
fiber $H^{-1}(s)\isom \Jac(C_s)$ since
$\eta$ is undefined only on a codimension $2$ subset.
Hence $\eta$ extends as a holomorphic $1$-form
on $H^{-1}(U_\reg)$.
Thus $\eta$ is well defined on 
$T^*\mathcal{U}_C(n,d)\cup H^{-1}(U_\reg)$. The complement of
this open subset in $\mathcal{H}_C(n,d)$ consists of such Higgs pairs
$(E,\phi)$ that $E$ is unstable \emph{and} $C_s$ is 
singular. Since the stability of $E$ and the non-singular condition 
for $C_s$ are
both open conditions, this complement has codimension at least 
two. Consequently, both the tautological section $\eta$ and the 
holomorphic symplectic form $\omega =- d\eta$ extend holomorphically
to the whole Higgs moduli space $\mathcal{H}_C(n,d)$.

We note that there are no holomorphic $1$-forms other than  constants
on a Jacobian variety since its cotangent bundle is trivial.  It implies  that
$$
\omega|_{H^{-1}(s)} = -d(\eta|_{H^{-1}(s)}) \equiv 0
$$
for $s\in U_\reg$. Therefore, a generic fiber of the Hitchin fibration is 
a Lagrangian subvariety of the holomorphic
symplectic variety $(\mathcal{H}_C(n,d),\omega)$.
The \emph{Poisson structure}
on $H^0(\mathcal{H}_C(n,d),\mathcal{O}_{\mathcal{H}_C(n,d)})$ is defined by
$$
\{f,g\} = \omega(X_f,X_g),\qquad f,g\in 
H^0(\mathcal{H}_C(n,d),\mathcal{O}_{\mathcal{H}_C(n,d)}),
$$
where $X_f$ denotes the Hamiltonian vector field defined by
 the relation $df= \omega(X_f,\cdot)$. 
Since $\omega$ vanishes on a generic  fiber of $H$, the holomorphic functions on 
$\mathcal{H}_C(n,d)$ coming from the coordinates
of the Hitchin fibration
 are \emph{Poisson commutative} with respect to the
holomorphic symplectic structure $\omega$. 
Therefore, $(\mathcal{H}_C(n,d),\omega, H)$ is an algebraically
completely integrable Hamiltonian system \cite{DM, V},
called the \emph{Hitchin integrable system}.

\begin{thm}[Hitchin \cite{H2}]
The Hitchin fibration
$$
H: \mathcal{H}_C(n,d)\longrightarrow V^* _{GL}
$$
is a generically Lagrangian Jacobian fibration
that defines an algebraically completely integrable Hamiltonian
system $(\mathcal{H}_C(n,d),\omega, H)$.  A
generic fiber $H^{-1}(s)$
is a Lagrangian with respect to the holomorphic symplectic 
structure $\omega$ and is isomorphic to the Jacobian variety
of a  spectral curve $C_s$.
\end{thm}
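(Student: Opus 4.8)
The plan is to verify, one by one, the three defining properties of an algebraically completely integrable Hamiltonian system listed in Section~\ref{sect:intro} for the triple $(\mathcal{H}_C(n,d),\omega,H)$, assembling the facts already established above. The geometric heart of the matter — the identification of a generic fiber with a Jacobian in (\ref{eq:Hitchinfiber}) and the holomorphic extension of $\omega$ across the bad locus — has been carried out, so what remains is primarily bookkeeping together with one genuinely delicate identification.

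First I would settle the dimension count. By Riemann-Roch $h^0(C,K_C)=g$ and $h^0(C,K_C^{\tensor i})=(2i-1)(g-1)$ for $i\ge 2$, so
\[
\dim V^*_{GL}=g+(g-1)\sum_{i=2}^{n}(2i-1)=g+(g-1)(n^2-1)=n^2(g-1)+1=N,
\]
while $\dim\mathcal{H}_C(n,d)=2\dim\mathcal{U}_C(n,d)=2N$. Since the genus computation $g(C_s)=n^2(g-1)+1=N$ gives $\dim\Jac(C_s)=N$, the isomorphism (\ref{eq:Hitchinfiber}) shows that over $U_\reg$ each fiber $H^{-1}(s)\isom\Jac(C_s)$ is an Abelian variety of dimension exactly half that of the total space. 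This establishes property~(1) and, crucially, shows that any fiber on which $\omega$ vanishes is automatically Lagrangian rather than merely isotropic.

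Next I would record the Lagrangian and Poisson-commutativity properties. The form $\omega=-d\eta$ extends holomorphically to all of $\mathcal{H}_C(n,d)$, as argued above using the codimension-two estimate for the complement of $T^*\mathcal{U}_C(n,d)\cup H^{-1}(U_\reg)$. Restricted to a smooth fiber $\isom\Jac(C_s)$, the form $\omega$ is an exact holomorphic $2$-form on an Abelian variety; since such a variety carries no non-constant holomorphic $1$-forms, $\omega|_{H^{-1}(s)}\equiv 0$, and by the half-dimension count the fiber is Lagrangian. Writing $H_1,\dots,H_N$ for the scalar components of $H$ in a linear coordinate system on $V^*_{GL}$, each $H_i$ is constant along the fibers; hence $dH_i$ annihilates vertical vectors, so $X_{H_i}$ is $\omega$-orthogonal to the fiber and therefore tangent to it by maximal isotropy. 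Consequently $\{H_i,H_j\}=\omega(X_{H_i},X_{H_j})=0$, which is property~(3).

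The step I expect to require the most care is property~(2): the identification of the base direction $T_sV^*_{GL}=V^*_{GL}$ with the dual Lie algebra of the fiber viewed as a complex torus. The Lagrangian structure already forces this abstractly, since contraction with $\omega$ identifies the vertical tangent space $T_0H^{-1}(s)$ with the conormal $T^*_sV^*_{GL}$, whence $T_sV^*_{GL}\isom\big(T_0H^{-1}(s)\big)^*=\mathfrak{g}_s^*$. To make it concrete one computes $T_0\Jac(C_s)=H^1(C_s,\mathcal{O}_{C_s})$ and, by Serre duality on $C_s$, its dual $H^0(C_s,K_{C_s})$; the remaining task is to match $H^0(C_s,K_{C_s})$ with $\bigoplus_{i=1}^n H^0(C,K_C^{\tensor i})$ through $\pi_*$ and relative duality, using $\pi_*\mathcal{O}_{C_s}\isom\bigoplus_{i=0}^{n-1}K_C^{\tensor -i}$. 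This bundle-theoretic matching, rather than any of the preceding formal steps, is where the real content sits; with it in hand the three conditions are verified and the theorem follows.
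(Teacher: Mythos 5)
Your proposal is correct and follows essentially the same route as the paper, which proves this theorem through the discussion preceding its statement: the spectral-curve identification $H^{-1}(s)\isom\Jac(C_s)$ of (\ref{eq:Hitchinfiber}), the codimension-two extension of $\eta$ and $\omega$, the vanishing of $\omega$ on fibers because holomorphic $1$-forms on a Jacobian are translation-invariant (hence closed), and the resulting Poisson commutativity of the components of $H$. Your explicit organization around the three axioms and the identification of $V^*_{GL}$ with the dual Lie algebra of the fiber simply makes explicit what the paper carries out in (\ref{eq:dH})--(\ref{eq:tangent2}) immediately after the theorem.
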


The Hitchin map $H$ restricted
to $T_E  ^* \, \mathcal{U}_C(n,d)$ for a stable $E$
is a polynomial map
$$
T_E ^*\,  \mathcal{U}_C(n,d)= H^0(C,\End(E)\tensor K_C)\owns \phi
\longmapsto \det(x-\phi)\in
\bigoplus_{i=1} ^n H^0(C,K_C ^{\tensor i})
$$
consisting of $g$ linear components, $3g-3$ quadratic
components, $5g-5$ cubic components, etc., and $(2n-1)(g-1)$
components of degree $n$. Thus the inverse image
$$
H^{-1}(s)\cap T_E ^* \,\mathcal{U}_C(n,d)
$$
for a generic $(E,\phi)$ consists of 
\begin{equation}
\label{eq:delta}
\delta = \prod_{i=1} ^n i^{(2i-1)(g-1)}
\end{equation}
points  \cite{BNR}. Consequently, the map
\begin{equation}
\label{eq:abelianization}
p: \Jac(C_s)\isom H^{-1}(s) \longrightarrow
H^{-1}(0) = \mathcal{U}_C(n,d)
\end{equation}
is a covering morphism of degree $\delta$.

Let us now determine the Hamiltonian vector field
associated to each coordinate function of $H$. 
Take a point $(E,\phi)\in T^*\mathcal{U}_C(n,d)\cap H^{-1}(U_\reg)$.
The tangent space of the Higgs 
moduli space at this
point is given by
\begin{equation}
\label{eq:tangenttoHiggs}
T_{(E,\phi)}\mathcal{H}_C(n,d)=
H^1(C,\End(E))\dsum H^0(C,\End(E)\tensor K_C).
\end{equation}
Since the symplectic form $\omega=-d\eta$ is the exterior derivative 
of the tautological $1$-form $\eta$ of (\ref{eq:eta}) 
on $T^*\mathcal{U}_C(n,d)$,
the evaluation of $\omega$ 
at $(E,\phi)$  is given by
\begin{equation}
\label{eq:omegaevaluation}
\omega \big( (a_1,b_1), (a_2,b_2)\big)
= \langle a_1, b_2\rangle -\langle a_2,b_1\rangle,
\end{equation}
where 
$(a_1,b_1), (a_2,b_2)\in 
H^1(C,\End(E))\dsum H^0(C,\End(E)\tensor K_C)$, and
$$
\langle\cdot  ,\cdot\rangle : H^1(C,\End(E))\dsum H^0(C,\End(E)\tensor K_C)
\longrightarrow \mathbb{C}
$$
is the Serre duality pairing. This expression is the
standard Darboux form of the symplectic form $\omega$.
Choose an open
neighborhood $Y_1$ of $E$ in $\mathcal{U}_C(n,d)$
on which the cotangent bundle  $T^* \mathcal{U}_C(n,d)$
is trivial. Then $H$ is a polynomial
map
\begin{multline*}
H: Y_1\times H^0(C,\End(E)\tensor K_C)
\owns (E',\phi)\\
\longmapsto
s=\det(x-\phi)\in 
\bigoplus_{i=1} ^n H^0(C,K_C ^{\tensor i})
\end{multline*}
that depends only on the second factor. The differential
$dH_{(E,\phi)}$ at the point $(E,\phi)$ gives a linear
isomorphism
\begin{equation}
\label{eq:dH}
dH_{(E,\phi)}:H^0(C,\End(E)\tensor K_C)
\overset{\sim}{\longrightarrow}
\bigoplus_{i=1} ^n H^0(C,K_C ^{\tensor i}). 
\end{equation}
As to the first factor of the tangent space of (\ref{eq:tangenttoHiggs}),
we use the differential of the covering map $p$ of 
(\ref{eq:abelianization}):
\begin{equation}
\label{eq:dp}
dp_{(E,\phi)}: H^1(C_s, \mathcal{O}_{C_s})
\overset{\sim}{\longrightarrow}
H^1(C,\End(E)),
\end{equation}
where $s=H(E,\phi)$.
The dual of (\ref{eq:dH}), together with (\ref{eq:dp}), gives an 
isomorphism
\begin{multline}
\label{eq:H1OCs}
V_{GL}\overset{\text{def}}{=}
\bigoplus_{i=0} ^{n-1} H^1(C, K_C ^{-\tensor i})
= H^1(C,\pi_*\mathcal{O}_{C_s})\\
\overset{\sim}{\longrightarrow} H^1(C,\End(E))\isom 
H^1(C_s, \mathcal{O}_{C_s}).
\end{multline}
From (\ref{eq:dH}) and (\ref{eq:H1OCs}), we see that the tangent
space to the Higgs moduli  is given by
\begin{equation}
\label{eq:tangent2}
\begin{aligned}
T_{(E,\phi)} \mathcal{H}_C(n,d)
&\isom 
\bigoplus_{i=1} ^{n} H^1(C, K_C ^{-\tensor (i-1)})
\dsum
\bigoplus_{i=1} ^n H^0(C,K_C ^{\tensor i})\\
&=V_{GL}\dsum V^* _{GL}.
\end{aligned}
\end{equation}
Therefore, the symplectic form  has a
decomposition into $n$ pieces
$\omega =\omega_1+\omega_2+\cdots+\omega_n$, and in each
factor $H^1(C, K_C ^{-\tensor (i-1)})
\dsum
H^0(C,K_C ^{\tensor i})$, $\omega_i$
 takes the standard Darboux form
\begin{equation}
\label{eq:Darboux}
\omega_i \big( (a_1 ^i,b_1 ^i), (a_2 ^i,b_2 ^i)\big)
= \langle a_1 ^i, b_2 ^i\rangle_i -\langle a_2 ^i,b_1 ^i\rangle_i,
\end{equation}
where 
$(a_1 ^i,b_1 ^i), (a_2 ^i,b_2 ^i)\in 
H^1(C, K_C ^{-\tensor (i-1)})
\dsum
H^0(C,K_C ^{\tensor i})$,
and $\langle\cdot,\cdot\rangle_i$ is the duality pairing
of $H^1(C, K_C ^{-\tensor (i-1)})$ and 
$H^0(C,K_C ^{\tensor i})$.

Since $s=H(E,\phi)\in U_\reg$ is a regular value of $H$, there is an
open subset $Y_2\subset U_\reg\subset V^* _{GL}$ around
$s$ such that $H^{-1}(Y_2)$ is locally the 
product of the fiber $H^{-1}(s)$ and $Y_2$. By taking
$Y_1$ smaller if necessary, we thus obtain a local
product neighborhood
$$
p^{-1}(Y_1)\cap H^{-1}(Y_2) \isom Y_1\times Y_2 = Y
$$
of $(E,\phi)$ in $\mathcal{H}_C(n,d)$. 
By construction,
the projections to the first and the second factors
coincide with the projection  $p:T^* \mathcal{U}_C(n,d)\rightarrow
 \mathcal{U}_C(n,d)$  and the Hitchin map $H$:
\begin{equation*}
		\xymatrix{
		&Y\ar[dl]_{p} \ar[dr]^{H}&\\
		Y_1& & 	Y_2 .
		}
\end{equation*} 
The neighborhood $Y$ and these projections provide
the Darboux coordinate system for $\omega$, and its expression
(\ref{eq:omegaevaluation}, \ref{eq:Darboux}) globally holds on $Y$. In particular,
the Hamiltonian vector fields corresponding to the 
components of $H$ on $Y$ are constant vector fields
that are determined by elements of $H^1(C, K_C ^{-\tensor (i-1)})$
for each $i=1,\dots,n$. 
Let $(h_1,\dots,h_N)$ be a 
linear coordinate system of $V^* _{GL}$, where 
$N=n^2 (g-1) + 1$. Since $V^* _{GL} = \bigoplus_{i=1} ^n
H^0(C,K_C ^{\tensor i})$, each coordinate function is 
an element of $V_{GL}$:
\begin{equation}
\label{eq:hjs}
h_1,\dots,h_N\in V_{GL} = \bigoplus_{i=0} ^{n-1} H^1(C,
K_C ^{-\tensor i}).
\end{equation}
Since $H^0(C_s,\mathcal{O}_{C_s})=\mathbb{C}$, 
the Hitchin map $H:\mathcal{H}_C(n,d)\rightarrow V^* _{GL}$
is actually the pull back of the coordinate functions
$H^{-1}(h_1),\dots,H^{-1}(h_N)$ on $H^{-1}(U_\reg)
\subset \mathcal{H}_C(n,d)$. The identification of $V_{GL}$ 
as the first factor  of the tangent space in (\ref{eq:tangent2})
gives the Hamiltonian vector fields corresponding to the coordinate
components of the Hitchin map. 
We have therefore established
that the Hamiltonian vector fields are linear flows with respect
to the linear coordinate of the Jacobian $\Jac(C_s)$
determined by (\ref{eq:H1OCs}). Of course each of these linear flows
extends globally on $\Jac(C_s)$ since the tangent bundle of
a Jacobian is trivial and a local linear function extends 
globally as a linear function.

\begin{thm}[Hitchin \cite{H2}]
\label{thm:linear}
The Hamiltonian vector fields corresponding to 
the Hitchin fibration are linear Jacobian
flows on a generic fiber $H^{-1}(s)\isom \Jac(C_s)$.
\end{thm}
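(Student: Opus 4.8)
The plan is to read off the Hamiltonian flows directly from the Darboux presentation of $\omega$ together with the local product structure $Y\isom Y_1\times Y_2$ constructed around a generic point $(E,\phi)$. The decisive point is that under the identification (\ref{eq:tangent2}) the summand $V_{GL}=\bigoplus_{i=0}^{n-1}H^1(C,K_C^{-\tensor i})$ is, through (\ref{eq:H1OCs}), exactly the tangent space $H^1(C_s,\mathcal{O}_{C_s})$ to the fiber $\Jac(C_s)$, while the complementary summand $V^* _{GL}$ is carried isomorphically onto the tangent space of the Hitchin base by $dH_{(E,\phi)}$ of (\ref{eq:dH}). Since on $Y$ the Hitchin map is by construction the projection to the factor $Y_2\subset V^* _{GL}$, each coordinate component $h_j$ of $H$ is a \emph{linear} function of the $V^* _{GL}$-variable alone.

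First I would compute $X_{h_j}$ from the defining relation $dh_j=\omega(X_{h_j},\cdot)$. Writing a tangent vector as a pair $(a,b)\in V_{GL}\dsum V^* _{GL}$ and using the Darboux form (\ref{eq:Darboux}), the fact that $h_j\in V_{GL}$ acts linearly on the $b$-slot gives $dh_j(a,b)=\langle h_j,b\rangle$, independent of $a$. Setting $X_{h_j}=(a_0,b_0)$ and solving $\langle a_0,b\rangle-\langle a,b_0\rangle=\langle h_j,b\rangle$ for all $(a,b)$ forces $b_0=0$ and $a_0=h_j$. Hence $X_{h_j}=(h_j,0)$ is a \emph{constant} vector field lying entirely in the fiber direction $V_{GL}\isom H^1(C_s,\mathcal{O}_{C_s})$.

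It then remains to globalize. Because the tangent bundle of an Abelian variety is trivialized by its translation-invariant vector fields, the constant field $X_{h_j}$ determined by $h_j\in H^1(C_s,\mathcal{O}_{C_s})$ extends uniquely to a translation-invariant holomorphic field on all of $\Jac(C_s)$, whose integral curves are the images of the straight lines $t\mapsto t\,h_j$ under the universal covering $H^1(C_s,\mathcal{O}_{C_s})\to\Jac(C_s)$; these are by definition the linear Jacobian flows. As the $N$ elements $h_1,\dots,h_N$ span $V_{GL}$ and the Darboux expression was shown to hold globally on $Y$, the $N$ commuting Hamiltonian vector fields of $H$ are precisely these linear flows, which proves the theorem.

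The genuine content, and the step I expect to be the main obstacle, is not the symplectic bookkeeping but verifying that the splitting (\ref{eq:tangent2}) really separates the fiber-tangent directions from the base-transverse directions compatibly with the Serre pairing inside $\omega$. Concretely, one must check that $dp$ of (\ref{eq:dp}) and the dual of $dH$ of (\ref{eq:dH}) identify $V_{GL}$ with $H^1(C_s,\mathcal{O}_{C_s})$ in a manner under which the constant fields $X_{h_j}$ become the \emph{flat}, translation-invariant fields on $\Jac(C_s)$ rather than merely constant in an arbitrary chart. The regularity hypothesis $s\in U_\reg$ and the resulting local product structure are exactly what make this compatibility hold, and once it is established the asserted linearity is immediate.
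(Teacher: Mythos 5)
Your proposal is correct and follows essentially the same route as the paper: the Darboux form (\ref{eq:omegaevaluation})--(\ref{eq:Darboux}) on the local product neighborhood $Y\isom Y_1\times Y_2$, the identification of the fiber direction $V_{GL}\isom H^1(C_s,\mathcal{O}_{C_s})$ via (\ref{eq:dp}) and (\ref{eq:H1OCs}), and the observation that the resulting constant fields extend to translation-invariant fields because the tangent bundle of a Jacobian is trivial. Your explicit computation $X_{h_j}=(h_j,0)$ merely spells out what the paper leaves implicit, and your closing remark about the compatibility of the splitting with the Serre pairing is exactly the point the paper secures through the regularity hypothesis $s\in U_\reg$ and the construction of $Y$.
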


How does the construction of the \emph{spectral data}
$(\mathcal{L}_E, C_s)$ 
from a Higgs pair $(E,\phi)$ change
when we consider the Serre dual  $(E^*\tensor K_C,-\phi^*)$? 
To answer this question, first we note 
\begin{equation}
\label{eq:dualcharacteristic}
\begin{aligned}
\tr \wedge^i (\phi) =\tr \wedge^i(\phi^*) &\in H^0(C,\End(\wedge^i E)
\tensor K_C ^i)\\
&= H^0(C,\End(\wedge^i (E^*\tensor K_C))
\tensor K_C ^i).
\end{aligned}
\end{equation}
For $s = (s_1,s_2,\dots, s_n)\in V^* _{GL}$, we write
$s^* = (-s_1,s_2,\dots, (-1)^n s_n)$. By definition, the spectral
curves $C_s$ and $C_{s^*}$ are isomorphic. As divisors of
$T^*C$,  their isomorphism $C_s \isom \epsilon(C_{s^*})$
is induced by the involution of $T^*C$ 
\begin{equation}
\label{eq:involutionepsilon}
\epsilon : T^*C\owns (p,x)\longmapsto (p,-x)\in T^*C,
\end{equation}
where $p\in C$ and $x\in T_p ^*C$.

\begin{prop}[Hitchin \cite{H3}]
\label{prop:spectralforSerredual}
The spectral data 
$ (\mathcal{L}_{E^*\tensor K_C},C_{s^*})$
corresponding
to  the Serre dual $(E^*\tensor K_C,-\phi^*)$ 
of the Higgs pair $(E,\phi)$ is given by
$$
\begin{cases}
\mathcal{L}_{E^*\tensor K_C} = 
\epsilon^*(\mathcal{L}_E ^*\tensor 
K_{C_{s}})\\
C_{s^*} = \epsilon(C_{s}).
\end{cases}
$$
The degree of these isomorphic line bundles is
$-\deg(E) + (n^2+n)(g-1)$. 
\end{prop}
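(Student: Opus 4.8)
The plan is to deduce the spectral-curve identity and the sign of the Higgs field from the involution $\epsilon$, and to obtain the line-bundle identity from relative Grothendieck--Serre duality for the finite flat covering $\pi:C_s\rightarrow C$ (working, as always for the spectral construction, over $U_\reg$ so that $C_s$ is non-singular). First I would record the spectral-curve part. By (\ref{eq:dualcharacteristic}) the $i$-th characteristic coefficient of $-\phi^*$ equals $(-1)^i\tr\wedge^i(\phi)=(-1)^i s_i$, so the characteristic data of $(E^*\tensor K_C,-\phi^*)$ is exactly $s^*=(-s_1,s_2,\dots,(-1)^n s_n)$; combined with the already-noted identity $C_{s^*}=\epsilon(C_s)$ arising from $\epsilon^*\tau=-\tau$, this gives the second line of the claim.

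For the line bundle, write $\omega_{C_s/C}=K_{C_s}\tensor \pi^*K_C^{-1}$ for the relative dualizing sheaf of the degree-$n$ covering $\pi:C_s\rightarrow C$. Relative duality furnishes a natural isomorphism of $\pi_*\mathcal{O}_{C_s}$-modules
\[
(\pi_*\mathcal{L}_E)^*\isom \pi_*(\mathcal{L}_E^*\tensor \omega_{C_s/C}).
\]
Since $E=\pi_*\mathcal{L}_E$, the projection formula together with a twist by $K_C$ yields $E^*\tensor K_C\isom \pi_*(\mathcal{L}_E^*\tensor K_{C_s})$. The delicate point is the module structure: the $\pi_*\mathcal{O}_{C_s}$-action on the left-hand $\mathcal{H}om$-module is the transpose of the action on $\pi_*\mathcal{L}_E$, so multiplication by the tautological section $\tau$ --- which is precisely the Higgs field $\phi$ on $E$ --- acts on the dual module as $\phi^*$.

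To land on the correct spectral curve $C_{s^*}=\epsilon(C_s)$ I would transport $\mathcal{L}_E^*\tensor K_{C_s}$ along $\epsilon$. Because $\epsilon$ restricts to an isomorphism $C_{s^*}\rightarrow C_s$ with $\pi|_{C_{s^*}}=\pi|_{C_s}\circ\epsilon$ and $\epsilon_*\epsilon^*=\mathrm{id}$, the pushforward to $C$ of $\epsilon^*(\mathcal{L}_E^*\tensor K_{C_s})$ is again $E^*\tensor K_C$, while the coordinate flip $\epsilon^*\tau=-\tau$ converts the action $\phi^*$ into $-\phi^*$. Hence $\epsilon^*(\mathcal{L}_E^*\tensor K_{C_s})$ carries exactly the module structure of the Higgs pair $(E^*\tensor K_C,-\phi^*)$, so by uniqueness of the spectral line bundle it is $\mathcal{L}_{E^*\tensor K_C}$, which is the first line of the claim.

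Finally, the degree is immediate: since $\epsilon$ is an isomorphism, $\deg \mathcal{L}_{E^*\tensor K_C}=\deg(\mathcal{L}_E^*\tensor K_{C_s})=-\deg\mathcal{L}_E+(2g(C_s)-2)$. Substituting $g(C_s)=n^2(g-1)+1$ and the earlier value $\deg\mathcal{L}_E=\deg E+n(n-1)(g-1)$ gives $-\deg E+(n^2+n)(g-1)$; this can be cross-checked by applying the degree formula $\deg\mathcal{L}=d'+n(n-1)(g-1)$ directly to the Higgs bundle $(E^*\tensor K_C,-\phi^*)$ of degree $d'=-\deg E+2n(g-1)$. I expect the main obstacle to be verifying that the relative-duality isomorphism respects the $\pi_*\mathcal{O}_{C_s}$-module (i.e. Higgs-field) structures and tracking the sign with care, so that the transpose produces $\phi^*$ and the involution $\epsilon$ produces exactly the required $-\phi^*$ rather than $+\phi^*$.
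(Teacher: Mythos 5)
Your argument is correct and arrives at the same answer, but it runs through a different key lemma than the paper. The paper's proof is a direct manipulation of the Beauville--Narasimhan--Ramanan four-term exact sequence
$$
0\longrightarrow \mathcal{L}_E\tensor K_{C_s}^{-1}\tensor \pi^*K_C
\longrightarrow \pi^*E\overset{\tau-\phi}{\longrightarrow}\pi^*(E\tensor K_C)\longrightarrow\mathcal{L}_E
\tensor \pi^*K_C\longrightarrow 0
$$
characterizing $\mathcal{L}_E$ as a cokernel: dualizing this sequence (legitimate since all terms are locally free on the non-singular $C_s$, so no $\mathcal{E}xt$ terms intervene) and twisting by $\pi^*K_C^{\tensor 2}$ exhibits $\mathcal{L}_E^*\tensor K_{C_s}$ as the spectral line bundle of $(E^*\tensor K_C,\phi^*)$, after which the involution $\epsilon$ is brought in to account for the sign in $-\phi^*$, exactly as in your last step. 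You instead invoke relative Grothendieck--Serre duality $(\pi_*\mathcal{L}_E)^*\isom\pi_*(\mathcal{L}_E^*\tensor\omega_{C_s/C})$ for the finite flat cover and track the $\pi_*\mathcal{O}_{C_s}$-module structure through the duality isomorphism. This is a more abstract route that buys generality (it would apply verbatim to singular spectral curves with torsion-free sheaves, where the locally-free dualization of the four-term sequence needs more care), at the cost of having to verify the compatibility of the duality isomorphism with the module structure --- the point you correctly flag as delicate; the paper's concrete approach makes that compatibility visible by inspection, since the dualized sequence has $\tau-\phi^*$ sitting in the middle. Your handling of the $\epsilon$-twist and both degree computations match the paper and are correct.
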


\begin{proof}
We use the exact sequence of \cite{BNR, H3} that characterizes 
$\mathcal{L}_E$:
$$
0\longrightarrow \mathcal{L}_E\tensor K_{C_s}^{-1}\tensor \pi^*K_C 
\longrightarrow \pi^*E\longrightarrow\pi^*(E\tensor K_C)\longrightarrow\mathcal{L}_E
\tensor \pi^*K_C\longrightarrow 0.
$$
The dual of this sequence is then
\begin{multline*}
0\longrightarrow \mathcal{L}_E ^*\tensor \pi^*K_C 
\longrightarrow \pi^*(E^* \tensor K_C)\longrightarrow\pi^*(E^*\tensor K_C ^{\tensor 2})\\
\longrightarrow\mathcal{L}_E ^* \tensor K_{C_s}
\tensor \pi^*K_C\longrightarrow 0.
\end{multline*}
Thus the spectral data of the Higgs pair $(E^* \tensor K_C,\phi^*)$ is 
$(\mathcal{L}_E ^* \tensor K_{C_s}, C_s)$. 
The involution $\epsilon$ comes in here when we consider the
Higgs pair $(E^* \tensor K_C,-\phi^*)$.
\end{proof}

\begin{prop}
\label{prop:SerreandSp}
Hitchin's integrable system for
the group $Sp_{2m}(\mathbb{C})$ is realized as 
the fixed-point-set of the Serre duality 
involution $(E,\phi)\mapsto (E^*\tensor K_C,-\phi^*)$
defined on the Higgs moduli space 
$\mathcal{H}_C(2m,2m(g-1))$.
\end{prop}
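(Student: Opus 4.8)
The plan is to make sense of the statement and then identify the fixed locus with the $Sp_{2m}$-Hitchin data through the spectral correspondence. First I would check that $\iota\colon(E,\phi)\mapsto(E^*\tensor K_C,-\phi^*)$ really is an involution of $\mathcal{H}_C(2m,2m(g-1))$: by \eqref{eq:serreduality} it sends degree $d$ to $-d+2n(g-1)$, and for $n=2m$, $d=2m(g-1)$ one has $-d+2n(g-1)=d$, so $\iota$ preserves this component and its fixed locus is well defined. A point $(E,\phi)$ is fixed precisely when there is an isomorphism $\beta\colon E\xrightarrow{\sim}E^*\tensor K_C$ intertwining $\phi$ and $-\phi^*$; equivalently, $\beta$ is a nondegenerate $K_C$-valued bilinear form $B$ on $E$ for which $\phi$ is skew-adjoint, $B(\phi x,y)+B(x,\phi y)=0$. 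Since a stable Higgs bundle is simple, $H^0(C,\End(E,\phi))=\mathbb{C}$, such a $\beta$ is unique up to a scalar; as the transposed form $B^t$ has the same intertwining property, $\beta^{-1}\beta^t\in\End(E,\phi)=\mathbb{C}$, whence $B^t=cB$ with $c^2=1$. Thus $B$ is either symmetric ($c=1$, an orthogonal structure) or skew ($c=-1$, a symplectic structure), and the entire content of the proposition is that $c=-1$.

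To determine $c$ and to exhibit the integrable structure I would pass to spectral data. By Proposition~\ref{prop:spectralforSerredual} the involution acts on $(\mathcal{L}_E,C_s)$ by $C_s\mapsto\epsilon(C_s)=C_{s^*}$ and $\mathcal{L}_E\mapsto\epsilon^*(\mathcal{L}_E^*\tensor K_{C_s})$. Hence $(E,\phi)$ is fixed iff $(i)$ $s=s^*$, i.e. the odd characteristic coefficients $s_1,s_3,\dots,s_{2m-1}$ vanish, and $(ii)$ $\mathcal{L}_E\isom\epsilon^*(\mathcal{L}_E^*\tensor K_{C_s})$ on $C_s$. Condition $(i)$ cuts out the base $\bigoplus_{j=1}^m H^0(C,K_C^{\tensor 2j})$, which is exactly the Hitchin base for the symplectic group (invariant degrees $2,4,\dots,2m$); a dimension count gives $(g-1)\dim\mathfrak{sp}_{2m}$, half the dimension of the fixed locus, as it should be for an integrable system.

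The crucial point, which I expect to be the main obstacle, is to rule out the orthogonal alternative $c=1$. Here the decisive invariant is the top coefficient $s_{2m}=\det\phi\in H^0(C,K_C^{\tensor 2m})$. For a skew form the functions $s_2,s_4,\dots,s_{2m}$ are algebraically independent and $s_{2m}$ is unconstrained, whereas for a symmetric form one has $\det\phi=\mathrm{Pf}(\phi)^2$, so the top modulus is the \emph{square} of a section of $H^0(C,K_C^{\tensor m})$ and the correct base replaces the degree-$2m$ coordinate by this Pfaffian of degree $m$. Since over the fixed locus $s_{2m}$ ranges over all of $H^0(C,K_C^{\tensor 2m})$ — every $s=s^*$ with $C_s$ nonsingular is realized, by the converse spectral construction of Section~\ref{sect:Hitchin} applied to any $\mathcal{L}$ satisfying $(ii)$ — a generic fixed point cannot be orthogonal, forcing $c=-1$. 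I would corroborate this by a local computation of the sign of $B$ at the ramification points of $\sigma=\epsilon|_{C_s}$, namely the points of $C_s$ over the zeros of $s_{2m}$, where the $K_C$-twist contributes the sign that makes $B$ skew.

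Finally, to recognize the fixed locus as the $Sp_{2m}$-Hitchin \emph{integrable system} and not merely a subvariety, I would verify $\iota^*\omega=\omega$ using the Darboux expression \eqref{eq:omegaevaluation} together with the compatibility of the Serre pairing with transposition, so that $\mathrm{Fix}(\iota)$ is a holomorphic symplectic submanifold. Over the open set where $C_s$ is nonsingular, condition $(ii)$ identifies each fiber with the Prym variety $\Prym(C_s/\bar C_s)$ of the double cover $C_s\to\bar C_s=C_s/\sigma$, which is precisely the generic fiber of the $Sp_{2m}$-Hitchin fibration. Combining the base identification, the Prym fibers, and the restricted symplectic form then exhibits $(\mathrm{Fix}(\iota),\omega,H)$ as the Hitchin integrable system for $Sp_{2m}(\mathbb{C})$.
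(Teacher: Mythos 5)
Your proposal is correct, and its skeleton --- the fixed-point condition $E\isom E^*\tensor K_C$, $\phi=-\phi^*$; the passage to spectral data via Proposition~\ref{prop:spectralforSerredual}; the identification of the base with $V^*_{Sp}=\bigoplus_{i=1}^m H^0(C,K_C^{\tensor 2i})$ and of the generic fiber with the Prym of the double cover $C_s\to C_s/\langle\epsilon\rangle$ --- is the same as the paper's. Where you genuinely diverge is at the one point the paper does not argue: the paper twists to $F=E\tensor K_C^{-1/2}$ and simply cites Hitchin \cite{H3} for the assertion that $(F,\phi)$ is an $Sp_{2m}$-Higgs bundle, then spends its effort computing the spectral line bundle $\mathcal{L}_0=\mathcal{L}_F\tensor\pi^*K_C^{-m+1/2}$ and checking $\mathcal{L}_0^*\isom\epsilon^*\mathcal{L}_0$ via the BNR exact sequence. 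You instead note, using simplicity of stable Higgs pairs, that the fixed-point isomorphism is a $K_C$-valued bilinear form that must be either symmetric or alternating, and you exclude the symmetric (orthogonal) alternative by the Pfaffian constraint $\det\phi=\mathrm{Pf}(\phi)^2$, which fails for generic $s\in V^*_{Sp}$ since every $s=s^*$ with $C_s$ nonsingular is realized by the converse spectral construction. This buys a self-contained proof of the symplectic/orthogonal dichotomy that the paper outsources, and it correctly isolates the genuine subtlety that the orthogonal Higgs bundles form lower-dimensional components of the fixed locus lying over the proper subvariety where $s_{2m}$ is a square --- a point the paper's one-line assertion glosses over (so, strictly, the $Sp_{2m}$ system is the component of the fixed locus dominating $V^*_{Sp}$, which your genericity argument makes precise). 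Your added verification that $\iota^*\omega=\omega$, absent from the paper, is also exactly what is needed to call the fixed locus an integrable subsystem rather than merely a subvariety.
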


\begin{proof}
The fixed-point-set consists of Higgs pairs $(E,\phi)$ such
that $E\isom E^*\tensor K_C$ and $\phi = -\phi^*$.
Choose a square root of $K_C$ and define $F=E\tensor K_C ^{-1/2}$.
The Higgs field $\phi = -\phi^*$ naturally acts on this bundle,
and the pair $(F,\phi)$ forms the moduli space of $Sp_{2m}$-Higgs
bundles \cite{H3}. 
The characteristic coefficients satisfy the relation $s=s^*$, hence
\begin{equation}
\label{eq:Vsp}
s\in  V^* _{Sp}\overset{\text{def}}{=}\bigoplus_{i=1} ^m H^0(C,K_C ^{\tensor 2i}) .
\end{equation}
The spectral curve $C_s$ has a non-trivial involution
$\epsilon: C_s\rightarrow C_s$. 
From the exact sequence
\begin{multline*}
0\longrightarrow \mathcal{L}_E\tensor K_{C_s}^{-1}\tensor \pi^*K_C
^{1/2} 
\longrightarrow \pi^*(E \tensor K_C
^{-1/2})\longrightarrow\pi^*(E\tensor K_C ^{1/2})\\
\longrightarrow\mathcal{L}_E
\tensor \pi^*K_C ^{1/2}\longrightarrow 0,
\end{multline*}
we see that $\mathcal{L}_F \isom \mathcal{L}_E \tensor \pi^* K_C
^{-1/2}$ and $\mathcal{L}_{F^*} \isom \epsilon^* \mathcal{L}_F ^*
\tensor K_{C_s}\tensor \pi^* K_C ^{-1/2}$. If we define
$\mathcal{L}_0 = \mathcal{L}_F \tensor \pi^* K_C ^{-m +1/2}$  following
\cite{H3}, then $\mathcal{L}_0 ^* \isom \epsilon^* \mathcal{L}_0$.
\end{proof}

\section{Deformations of spectral curves}
\label{sect:deformation}

The Hitchin fibration (\ref{eq:Hitchinmap}) is 
a family of deformations of Jacobians, but  it is not 
 \emph{effective}. 
In this section we determine the natural effective family
associated with the Hitchin fibration.

An obvious action on $\mathcal{H}_C(n,d)$ that
preserves the spectral curves  is the scalar multiplication of Higgs fields
 $\phi\mapsto \lam\cdot\phi$ by $\lam\in \mathbb{C}^*$.
Although this $\mathbb{C}^*$-action is \emph{not}
symplectomorphic because it changes the symplectic form
$\omega\mapsto\lam\cdot\omega$, from the point of view
of constructing an effective family of Jacobians, we need to 
quotient it out. 
We note that the $\mathbb{C}^*$-action on $V^* _{GL}$ defined by
\begin{multline}
\label{eq:weighted}
V^* _{GL} = \bigoplus_{i=1} ^n H^0(C,K_C^{\tensor i})\owns
s=(s_1,s_2,\dots,s_n)\\
\longmapsto
\lam\cdot s=(\lam s_1,\lam^2 s_2,\dots, \lam^n s_n)\in V^* _{GL} 
\end{multline}
makes the Hitchin fibration  $\mathbb{C}^*$-equivariant:
\begin{equation}
\label{eq:scalarequivariance}
\begin{CD}
\mathcal{H}_C(n,d) @>{\qquad\lam\qquad}>> \mathcal{H}_C(n,d)\\
@V{H}VV @VV{H}V\\
 \bigoplus_{i=1} ^n H^0(C,K_C^{\tensor i})
 @>>{\text{diag}(\lam,\dots,\lam^n)}> 
 \bigoplus_{i=1} ^n H^0(C,K_C^{\tensor i}),
\end{CD}
\qquad \lam\in\mathbb{C}^*.
\end{equation}
Since $C_s$ is defined by the equation (\ref{eq:chareq}),
the natural $\mathbb{C}^*$-action on the cotangent
bundle $T^*C$ gives the isomorphism 
$\lam:C_s\rightarrow C_{\lam\cdot s}$ that commutes with
the projection $\pi:T^*C\rightarrow C$. The line bundles
$\mathcal{L}_{(E,\phi)}$ on $C_s$ and  $\mathcal{L}_{(E,\lam\phi)}$
on $C_{\lam\cdot s}$ corresponding to $E$ are related by this
isomorphism by
$$
\lam^* \mathcal{L}_{(E,\lam\phi)}\overset{\sim}{\longrightarrow}
\mathcal{L}_{(E,\phi)}.
$$

Another group action on the Higgs moduli space that leads to
trivial deformations of the spectral curves is the Jacobian action.
The  
$\Jac(C)$-action on $\mathcal{H}_C(n,d)$
is defined 
by 
$(E,\phi)\longmapsto (E\tensor L,\phi)$, where $L\in \Jac(C)
=\Pic^0(C)$
is a line bundle on $C$ of degree $0$. The Higgs field is preserved
in this action because 
$ (E\tensor L)^*\tensor (E\tensor L)=E^*
\tensor E$
 is unchanged, hence
 $$
 \phi\in 
 H^0(C,\End(E)\tensor K_C) = H^0(C,\End(E\tensor L)\tensor K_C).
 $$
  Thus the cotangent bundle $T^*\mathcal{U}_C(n,d)$
  is trivial along every orbit of the $\Jac(C)$-action on
  $\mathcal{U}_C(n,d)$, and each $\Jac(C)$-orbit in
  $\mathcal{H}_C(n,d)$ lies in the same fiber of the Hitchin
  fibration.   Let us identify the
$\Jac(C)$-action on a generic fiber $H^{-1}(s) \isom 
\Jac(C_s)$. 
The covering map 
$\pi:C_s \rightarrow C$ induces an injective  homomorphism
$
\pi^*: \Jac(C)\owns L \longmapsto \pi^*L\in  \Jac(C_s)
$.
This is injective because if $\pi^*L\isom \mathcal{O}_{C_s}$,
then by the projection formula we have
$$
\pi_*(\pi^*L) \isom \pi_*\mathcal{O}_{C_s}\tensor L\isom
\bigoplus_{i=0} ^{n-1} L\tensor K_C ^{\tensor -i},
$$
which has a nowhere vanishing section.
Hence $L\isom \mathcal{O}_C$. Take a point $(E,\phi)\in
H^{-1}(s)$ and let $\mathcal{L}_E$ be the corresponding line 
bundle on $C_s$. Since 
$\pi_*(\mathcal{L}_E\tensor \pi^*L) \isom E\tensor L$, 
the action of $\Jac(C)$ on $H^{-1}(s)\isom 
\Jac(C_s)$ is through the canonical action of the subgroup
$$
\Jac(C)\isom \pi^*\Jac(C)\subset \Jac(C_s)
$$ 
on $\Jac(C_s)$.

On the open subset 
$T^*\mathcal{U}_C(n,d)$
of  $\mathcal{H}_C(n,d)$,  the $\Jac(C)$-action is symplectomorphic
because it is
induced by the action on the base space $\mathcal{U}_C(n,d)$.
On the other open subset $H^{-1}(U_\reg)$ the action is also symplectomorphic 
because it preserves each fiber which is a Lagrangian. 
Since the symplectic form $\omega$ is defined by extending
the canonical form $\omega = -d\eta$ to $\mathcal{H}_C(n,d)$,
the $\Jac(C)$-action is globally holomorphic symplectomorphic
on $\mathcal{H}_C(n,d)$.
  This action is actually
 a Hamiltonian action and the first component
 of the Hitchin map
\begin{equation}
\label{eq:momentH1}
H_1:\mathcal{H}_C(n,d)\owns (E,\phi)\longmapsto
\tr(\phi) \in H^0(C,K_C)
\end{equation}
is the moment map. Note that 
$H^1(C,\mathcal{O}_C)$ is the Lie algebra of the Abelian
group $\Jac(C)$, and $H^0(C,K_C)$ is its dual Lie algebra. 
Since the infinitesimal action of $H^1(C,\mathcal{O}_C)$
on the Higgs moduli space defines a vector field
which is obtained by identifying $H^1(C,\mathcal{O}_C)$ with
 the first component of (\ref{eq:H1OCs}), 
the symplectic dual to this vector field is the map to the first 
component of second factor in (\ref{eq:tangent2}),
i.e, $H_1$.

We can therefore construct the symplectic quotient
$\mathcal{H}_C(n,d)/\!\!/\Jac(C)$, which we will do 
 in Section~\ref{sect:quotient}. 
Here our interest is
to determine an effective family of spectral curves.
The moment map $H_1$ of
(\ref{eq:momentH1})  being the trace of $\phi$, it  is natural  to 
define
\begin{equation}
\label{eq:Vsl}
V^* _{SL} = V^* _{SL_n(\mathbb{C})} = 
\bigoplus_{i=2} ^n H^0(C,K_C ^{\tensor i}) \subset
V^* _{GL}.
\end{equation}
This is a vector space of dimension $(n^2-1)(g-1)$. 
Now consider a partial projective space
\begin{equation}
\label{eq:PVSL}
\mathbb{P}(\mathcal{H}_C ^{SL}(n,d))
= \big(H^{-1}(V^* _{SL})\setminus H^{-1}(0)\big)\big/\mathbb{C}^* .
\end{equation}
This is no longer a holomorphic symplectic manifold, yet 
the Hitchin fibration naturally descends to a generically 
Jacobian fibration
\begin{equation}
\label{eq:PH}
PH: \mathbb{P}(\mathcal{H}_C ^{SL}(n,d))\longrightarrow
\mathbb{P}_w(V^* _{SL})
\end{equation}
over the weighted projective space of $V^* _{SL}$ defined 
by the restriction of (\ref{eq:weighted}) on $V^* _{SL}$.
We now claim

\begin{thm}[Effective Jacobian fibration]
\label{thm:effective}
The Jacobian fibration 
$$
PH: \mathbb{P}(\mathcal{H}_C ^{SL}(n,d))\longrightarrow
\mathbb{P}_w(V^* _{SL})
$$
 is generically effective.
\end{thm}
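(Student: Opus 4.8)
The plan is to prove that the Jacobian fibration $PH$ is generically effective by showing that the Kodaira--Spencer map associated to the family is injective at a generic point of the base $\mathbb{P}_w(V^*_{SL})$. Recall that a family of Jacobians (or more generally Abelian varieties) is effective precisely when the infinitesimal deformation of the fiber induced by moving along the base is nontrivial, i.e., the Kodaira--Spencer map from the tangent space of the base into $H^1(\Jac(C_s), T_{\Jac(C_s)})$ is injective. Since the tangent bundle of an Abelian variety is trivial, this target is $H^1(\Jac(C_s),\mathcal{O})\otimes H^0(\Jac(C_s),\Omega^1)^*$, and what controls the deformation is really the variation of the spectral curve $C_s$ together with the polarization data. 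So the first step is to reduce the effectiveness of the \emph{Jacobian} fibration to the effectiveness of the underlying family of \emph{spectral curves} $C_s$ over $\mathbb{P}_w(V^*_{SL})$, using the Torelli-type principle that a nontrivial deformation of the curve induces a nontrivial deformation of its polarized Jacobian.

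The key computation, then, is to analyze the family of spectral curves. For $s\in V^*_{SL}$ the curve $C_s\subset T^*C$ is cut out inside the total space of $K_C$ by the section $\tau^{\otimes n}+\sum_{i=2}^n (-1)^i s_i\,\tau^{\otimes(n-i)}$ (the linear term $s_1=\tr\phi$ having been set to zero by passing to $V^*_{SL}$). I would compute the Kodaira--Spencer class of this family directly: deforming $s$ by $\dot s=(\dot s_2,\dots,\dot s_n)\in V^*_{SL}$ perturbs the defining equation, and the resulting first-order deformation of $C_s$ is represented by an explicit class built from $\dot s$ restricted to $C_s$. Concretely, using the conormal sequence for $C_s\hookrightarrow T^*C$, the deformation class lives in $H^0(C_s, N_{C_s/T^*C})=H^0(C_s,\pi^*K_C^{\otimes n}|_{C_s})$, and $\dot s$ maps to it by pulling back and restricting. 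The infinitesimal scalar action $\phi\mapsto\lambda\phi$ that we have quotiented out corresponds exactly to the Euler direction $\dot s=(2s_2,3s_3,\dots,ns_n)$, so passing to the weighted projective quotient removes precisely this one-dimensional kernel.

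The main obstacle, and hence the heart of the argument, is to verify that after removing the scalar direction the induced map from $T_{[s]}\mathbb{P}_w(V^*_{SL})$ into the deformation space $H^1(C_s,T_{C_s})$ is injective for generic $s$. I would argue this by identifying the composite of the normal-bundle deformation map with the natural map $H^0(C_s,N_{C_s/T^*C})\to H^1(C_s,T_{C_s})$ coming from the ambient surface, and showing its kernel is exactly the span of the Euler vector, for $s$ in a dense open subset of $U_\reg$. Since $\bigoplus_{i=2}^n H^0(C,K_C^{\otimes i})\cong H^0(C_s,\pi^*K_C^{\otimes n})/(\text{image of }\tau^{\otimes n}\text{-reduction})$ by the same direct-sum decomposition $\pi_*\mathcal{O}_{C_s}\cong\bigoplus_{i=0}^{n-1}K_C^{\otimes -i}$ established in Section~\ref{sect:Hitchin}, the sections parametrizing first-order deformations of $s$ inject (modulo the Euler direction) into sections deforming $C_s$; a dimension count together with the irreducibility and smoothness of $C_s$ for $s\in U_\reg$ then forces the map on the tangent space of the weighted projective base to be injective generically. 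Once the spectral family is shown to be effective, the reduction from the first paragraph transports this to the Jacobian fibration $PH$, completing the proof.
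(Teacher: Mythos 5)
Your setup is the same as the paper's: reduce effectiveness of the Jacobian fibration to effectiveness of the family of spectral curves, use the normal bundle sequence for $C_s\subset T^*C$ to place the first-order deformation class in $H^0(C_s,\mathcal{N}_s)\isom H^0(C_s,\pi^*K_C^{\tensor n}|_{C_s})\isom V^*_{GL}$, and identify the Euler direction $(2s_2,\dots,ns_n)$ (equivalently $\tau|_{C_s}$) as the direction killed by passing to the weighted projective quotient. However, the heart of the matter is left unproved. From the sequence $0\to\mathcal{T}C_s\to\mathcal{T}T^*C|_{C_s}\to\mathcal{N}_s\to 0$, the kernel of the map $H^0(C_s,\mathcal{N}_s)\to H^1(C_s,\mathcal{T}C_s)$ is precisely the image of $H^0(C_s,\mathcal{T}T^*C|_{C_s})$, and computing this space is the entire content of the theorem. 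No dimension count from ``irreducibility and smoothness of $C_s$'' produces it: a priori one only gets $H^0(C_s,\mathcal{T}T^*C|_{C_s})\hookrightarrow H^0(C_s,K_{C_s})$, which has dimension $n^2(g-1)+1$. The paper's proof requires two genuinely global inputs about the ambient surface that are absent from your argument: (i) $H^0(T^*C,\Lambda^1(T^*C))\isom H^0(C,K_C)\dsum\mathbb{C}\cdot\tau$ (proved by restricting $1$-forms to sections $\sigma(C)$ and using the vanishing of the Kodaira--Spencer map $\kappa_0$, then dividing by $\tau$), and (ii) the restriction map $H^0(T^*C,\Lambda^1(T^*C))\to H^0(C_s,\Lambda^1(T^*C)|_{C_s})$ is an isomorphism (proved using that $\deg K_C>0$ forces every divisor of $T^*C$ to meet $C_s$). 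Without these, the claimed injectivity on $T_{[s]}\mathbb{P}_w(V^*_{SL})$ is an assertion, not a proof.

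There is also an imprecision worth flagging: the kernel of the full Kodaira--Spencer map $H^0(C_s,\mathcal{N}_s)\isom V^*_{GL}\to H^1(C_s,\mathcal{T}C_s)$ is \emph{not} just the Euler line; it is the $(g+1)$-dimensional space $\iota\bigl(H^0(C,K_C)\dsum\mathbb{C}\cdot\tau\bigr)$, where the $H^0(C,K_C)$ summand maps onto the $s_1$-directions. This is exactly why one must first restrict to $V^*_{SL}$ (killing the trace component) before quotienting by $\mathbb{C}^*$; the Euler line is the whole kernel only after that restriction. Your statement that the kernel ``is exactly the span of the Euler vector'' conflates these two reductions, and as written would suggest (incorrectly) that the family over $\mathbb{P}_w(V^*_{GL})$ is already effective.
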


The rest of this section is devoted to the proof of this theorem.
The statement is equivalent to the claim 
that the family of deformations
of spectral curves parametrized by $\mathbb{P}_w(V^* _{SL})$
is effective. 
 Let $s\in V^* _{SL}$ be a point for which the spectral curve
 $C_s$ is  non-singular, and $\bar{s}$ the
 corresponding point of $\mathbb{P}_w(V^* _{SL})$.
 We wish to show that the Kodaira-Spencer map
 $$
 T_{\bar{s}}\,\mathbb{P}_w(V^* _{SL}) \longrightarrow
 H^1(C_s, \mathcal{T}C_s)
 $$
 is injective, where $\mathcal{T}X$ denotes the tangent sheaf of $X$.

The spectral curve $C_s$ of (\ref{eq:chareq})
is the divisor of $T^*C$ defined by
the section 
$$
\det(\tau-\phi)\in H^0(T^*C,\pi^*K_C ^{\tensor n}),
$$
where $\tau$ is the tautological section of $\pi^*K_C$ on $T^*C$.
Let $\mathcal{N}_s$ denote the normal sheaf of $C_s$ in $T^*C$.
Since $K_{T^*C} = \Lambda^2(T^*C) \isom \mathcal{O}_{T^*C}$
by the holomorphic symplectic form $\omega_C = -d\tau$, 
we have
$$
\mathcal{N}_s \isom K_{C_s}\isom \pi^*K_C ^{\tensor n}.
$$
From
\begin{equation*}
\begin{CD}
0@>>> \mathcal{T}C_s @>>> \mathcal{T} T^*C|_{C_s}
@>>> \mathcal{N}_s@>>>0,
\end{CD}
\end{equation*}
we obtain
\begin{equation}
\label{eq:KS}
\begin{CD}
0 @>>> H^0(C_s,\mathcal{T} T^*C)
@>{\iota}>> H^0(C_s,\mathcal{N}_s)
@>{\kappa}>> H^1(C_s,\mathcal{T}C_s)\\
@>>> H^1(C_s,\mathcal{T} T^*C)
@>>> H^1(C_s,\mathcal{N}_s).
\end{CD}
\end{equation}
Since 
$$
H^0(C_s,\mathcal{N}_s)\isom H^0(C,\pi_* K_{C_s})
\isom \bigoplus_{i=1} ^n H^0(C,K_C ^{\tensor i})= V^* _{GL},
$$
the homomorphism $\kappa$ is the Kodaira-Spencer map
for the deformations of spectral curves on $V^* _{GL}$.
We thus need to identify the image of $\iota$. We note that
the tangent sheaf and the cotangent sheaf are isomorphic on 
the total space of the cotangent bundle $T^*C$, i.e.,
$\mathcal{T}T^*C \isom \Lambda^1(T^*C)$.

\begin{prop}
We have the following isomorphisms:
\begin{equation}
\begin{aligned}
\label{eq:H0s}
&H^0(T^*C,\Lambda^0(T^*C)) \isom \mathbb{C}\\
&H^0(T^*C,\Lambda^1(T^*C))
\isom H^0(C,K_C)\dsum \mathbb{C}\cdot \tau \\
&H^0(T^*C,\Lambda^2(T^*C)) \isom \mathbb{C}.
\end{aligned}
\end{equation}
\end{prop}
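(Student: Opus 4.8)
The plan is to reduce all three computations to cohomology on the base curve $C$ via the affine projection $\pi\colon T^*C\to C$. Since $\pi$ is affine, for any coherent sheaf of the form $\pi^*M$ we have $H^0(T^*C,\pi^*M)\isom H^0(C,M\tensor\pi_*\mathcal{O}_{T^*C})$ by the projection formula, and the key input is the identification $\pi_*\mathcal{O}_{T^*C}\isom\Sym(K_C^{-1})=\bigoplus_{k\ge0}K_C^{\tensor -k}$ already recorded in the discussion of (\ref{eq:spectralcurve2}). Because $g\ge2$, the line bundle $K_C$ has positive degree $2g-2$, so $H^0(C,K_C^{\tensor(j-k)})$ vanishes as soon as $k>j$; this single vanishing principle drives every case.

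For $\Lambda^0(T^*C)=\mathcal{O}_{T^*C}$ I would write $H^0(T^*C,\mathcal{O})=H^0(C,\pi_*\mathcal{O}_{T^*C})=\bigoplus_{k\ge0}H^0(C,K_C^{\tensor -k})$, and note that only the $k=0$ term $H^0(C,\mathcal{O}_C)=\mathbb{C}$ survives, all others having negative degree. For $\Lambda^2(T^*C)=K_{T^*C}$ I would invoke the isomorphism $K_{T^*C}\isom\mathcal{O}_{T^*C}$ furnished by the holomorphic symplectic form $\omega_C=-d\tau$, already noted before (\ref{eq:KS}); then $H^0(T^*C,\Lambda^2)\isom H^0(T^*C,\mathcal{O})=\mathbb{C}$, generated by $\omega_C$.

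The substantive case is $\Lambda^1(T^*C)$. I would use the relative cotangent sequence for the affine morphism $\pi$,
$$0\longrightarrow\pi^*K_C\longrightarrow\Lambda^1(T^*C)\longrightarrow\pi^*K_C^{\tensor-1}\longrightarrow0,$$
where the identification of the relative cotangent sheaf $\Omega^1_{T^*C/C}$ with $\pi^*K_C^{\tensor-1}$ comes from the fact that the fiberwise tangent sheaf of the total space of the line bundle $K_C$ is $\pi^*K_C$ (a local fiber coordinate $y$ gives $\partial/\partial y$ as a frame of $\pi^*K_C$, hence $dy$ a frame of $\pi^*K_C^{\tensor-1}$). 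The projection formula then gives $H^0(T^*C,\pi^*K_C)=\bigoplus_{k\ge0}H^0(C,K_C^{\tensor(1-k)})$, in which only $k=0$ and $k=1$ contribute, yielding $H^0(C,K_C)$ together with $H^0(C,\mathcal{O}_C)=\mathbb{C}$; the latter summand is precisely $\mathbb{C}\cdot\tau$, since the tautological section of $\pi^*K_C$ maps under the inclusion into $\Lambda^1(T^*C)$ to the Liouville form $\tau$. On the other hand $H^0(T^*C,\pi^*K_C^{\tensor-1})=\bigoplus_{k\ge0}H^0(C,K_C^{\tensor(-1-k)})=0$ by the degree count. Taking global sections of the sequence and using left-exactness, this last vanishing forces $H^0(T^*C,\Lambda^1(T^*C))\isom H^0(T^*C,\pi^*K_C)=H^0(C,K_C)\dsum\mathbb{C}\cdot\tau$.

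The main obstacle I anticipate is pinning down the relative cotangent sheaf with the correct twist, i.e.\ showing $\Omega^1_{T^*C/C}\isom\pi^*K_C^{\tensor-1}$ rather than $\pi^*K_C$, and---once the abstract count produces an extra one-dimensional summand---verifying that this summand is genuinely spanned by the tautological $1$-form $\tau$ and not merely abstractly isomorphic to $\mathbb{C}$. Both points are resolved by working in a local trivialization with fiber coordinate $y$: the degree-one-in-$y$ part of $\pi^*K_C$ is exactly the line spanned by $\tau$, while the compatibility $\Lambda^2(T^*C)\isom\pi^*K_C\tensor\pi^*K_C^{\tensor-1}\isom\mathcal{O}_{T^*C}$ serves as a consistency check on the twist, matching the symplectic trivialization used for $\Lambda^2$.
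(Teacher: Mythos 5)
Your proof is correct, and it takes a genuinely different route from the paper's. For the function-theoretic statements the paper argues geometrically: it connects any two points of $T^*C$ by compact curves $\sigma(C)$ (images of sections $\sigma\in H^0(C,K_C)$) and uses constancy of holomorphic functions on compact curves; you instead push forward along the affine map $\pi$ and read off $H^0(T^*C,\mathcal{O})=\bigoplus_{k\ge0}H^0(C,K_C^{\tensor -k})=\mathbb{C}$ from the degree count, which is shorter and needs no base-point-freeness of $|K_C|$. For $\Lambda^1$ the paper restricts to a section $\sigma(C)$, uses the conormal sequence and the vanishing of the Kodaira--Spencer map $\kappa_0$ to show the restriction $\sigma^*$ is split surjective onto $H^0(C,K_C)$, and then identifies the kernel $H^0(T^*C,\Lambda^1(T^*C)\tensor\mathcal{O}(-C))$ with $\mathbb{C}\cdot\tau$ by dividing a section by $\tau$; you bypass the section entirely via the relative cotangent sequence $0\to\pi^*K_C\to\Lambda^1(T^*C)\to\pi^*K_C^{\tensor-1}\to0$ together with $H^0(T^*C,\pi^*K_C^{\tensor-1})=0$ and the fiberwise grading of $\pi_*\pi^*K_C\isom\bigoplus_{k\ge0}K_C^{\tensor(1-k)}$. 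In fact your relative cotangent sequence is the paper's $\wedge\tau$ sequence in disguise (its quotient $\Lambda^2(T^*C)\tensor\mathcal{O}_{T^*C}(-C)\isom\pi^*K_C^{\tensor-1}$), so both proofs pass through the same intermediate isomorphism $H^0(T^*C,\pi^*K_C)\isom H^0(T^*C,\Lambda^1(T^*C))$; you then get the summand $\mathbb{C}\cdot\tau$ for free as the degree-one graded piece, whereas the paper extracts it by an explicit division argument. What the paper's longer detour buys is the restriction-to-a-curve technique that it immediately reuses in the following lemma (restriction to the spectral curve $C_s$); your argument is the more economical one for this proposition taken on its own.
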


\begin{proof}
The first isomorphism of (\ref{eq:H0s}) asserts that every globally
defined holomorphic function $f$ on $T^*C$ is a constant. 
A section $\sigma\in H^0(C,K_C)$  is
a map $\sigma:C\rightarrow T^*C$. Take an arbitrary pair
of points $(x,y)$ in $T^*C$. If they are not on the same
fiber, then there is a section $\sigma$ such that both $x$ and $y$
are on the image $\sigma(C)$. Since $f$ is constant on $\sigma(C)$,
$f(x) = f(y)$. If they are on the same fiber, then choose a point
$z\in T^*C$ not on this fiber and use the same argument.

Since $\Lambda^2(T^*C)\isom \mathcal{O}_{T^*C}$, the third
isomorphism follows from the first one.

The second isomorphism of (\ref{eq:H0s}) asserts that every
holomorphic $1$-form $\alpha$ on $T^*C$ is either the pull-back
of a holomorphic $1$-form on $C$ via $\pi:T^*C\rightarrow C$, the tautological 
$1$-form $\tau$, or a linear combination of them.
Let $\sigma:C\rightarrow T^*C$ be an arbitrary section. 
Since the normal sheaf of $\sigma(C)$ in $T^*C$ is 
$K_{\sigma(C)}$, we have on $\sigma(C)$
$$
\begin{CD}
0@>>>K_{\sigma(C)} ^{-1}
@>>>\Lambda^1(T^*C)\tensor \mathcal{O}_{\sigma(C)}
@>>>K_{\sigma(C)}
@>>>0.
\end{CD}
$$
Therefore, identifying $C\isom \sigma(C)$, we obtain
\begin{multline*}
0\longrightarrow H^0(C, \Lambda^1(T^*C)\tensor \mathcal{O}_C)
\longrightarrow H^0(C,K_C)
\overset{\kappa_0}{\longrightarrow}
H^1(C,K_C ^{-1})\\
\longrightarrow H^1(C, \Lambda^1(T^*C)\tensor \mathcal{O}_C)
\longrightarrow H^1(C,K_C)\longrightarrow 0.
\end{multline*}
Here $\kappa_0$ is the Kodaira-Spencer map assigning
a deformation of $C$ to a \emph{displacement} of
$C$ in $T^*C$ through a section of $K_C$. But since
$\sigma(C)$ is always isomorphic to $C$, we do not obtain
any deformation of $C$ in this way. Hence $\kappa_0$
is the $0$-map. Therefore, 
\begin{equation}
\label{eq:1formiso}
H^0(\sigma(C), \Lambda^1(T^*C)\tensor \mathcal{O}_{\sigma(C)})
\isom H^0(\sigma(C),K_{\sigma(C)})
\isom H^0(C,K_{C}).
\end{equation}
Now consider an exact sequence on
$T^*C$
\begin{multline*}
0\longrightarrow\Lambda^1(T^*C)\tensor \mathcal{O}_{T^*C}(-\sigma(C))
\longrightarrow\Lambda^1(T^*C)\\
\longrightarrow\Lambda^1(T^*C)\tensor \mathcal{O}_{\sigma(C)}
\longrightarrow0,
\end{multline*}
which produces
\begin{multline*}
0\longrightarrow H^0(T^*C, \Lambda^1(T^*C)\tensor 
\mathcal{O}_{T^*C}(-\sigma(C)))
\longrightarrow H^0(T^*C,\Lambda^1(T^*C))\\
\overset{r}{\longrightarrow}H^0(\sigma(C),\Lambda^1(T^*C)\tensor 
\mathcal{O}_{\sigma(C)}).
\end{multline*}
Because of (\ref{eq:1formiso}), the homomorphism $r$ is equal to
the pull-back
$$
r = \sigma^*: H^0(T^*C,\Lambda^1(T^*C))
\longrightarrow H^0(C,K_C)
$$
 by the section $\sigma:C\rightarrow T^*C$.
It is surjective because $\sigma^*\circ\pi^*=id_{H^0(C,K_C)}$. Therefore, we have a splitting exact sequence
\begin{multline}
\label{eq:splittingexact}
0\longrightarrow H^0(T^*C, \Lambda^1(T^*C)\tensor 
\mathcal{O}(-\sigma(C)))
\longrightarrow
H^0(T^*C,\Lambda^1(T^*C))\\
\overset{\sigma^*}{\longrightarrow}
H^0(C,K_C)\longrightarrow 0.
\end{multline}
The tautological $1$-form $\tau\in H^0(T^*C, \pi^*K_C)$ defines
$$
0\longrightarrow \pi^*K_C
\longrightarrow \Lambda^1(T^*C)
\overset{\wedge\tau}{\longrightarrow}
\Lambda^2(T^*C)\tensor \mathcal{O}_{T^*C}(-C)
\longrightarrow 0,
$$
noting that  $\tau$ vanishes along the divisor $C\subset T^*C$. 
Since 
$$
H^0(T^*C, \Lambda^2(T^*C)\tensor
\mathcal{O}_{T^*C}(-C)) \isom H^0(T^*C,\mathcal{O}_{T^*C}(-C))
=0,
$$
we obtain
$$
H^0(T^*C,\pi^*K_C) \isom H^0(T^*C,\Lambda^1(T^*C)).
$$
Take $\alpha\in H^0(T^*C, \Lambda^1(T^*C)\tensor 
\mathcal{O}_{T^*C}(-C)) \isom 
H^0(T^*C,\pi^*K_C (-C))$. Then
$$
\alpha/\tau\in H^0(T^*C,\pi^*K_C (-C)\tensor \pi^*K_C ^{-1}(C))
\isom H^0(T^*C,\mathcal{O}_{T^*C}) \isom\mathbb{C}.
$$
Therefore, $\alpha$ is a constant multiple of $\tau$, and we have
obtained
\begin{equation}
\label{eq:H0lambda1}
H^0(T^*C, \Lambda^1(T^*C)\tensor 
\mathcal{O}_{T^*C}(-C)) \isom \mathbb{C}.
\end{equation}
From (\ref{eq:splittingexact}) and (\ref{eq:H0lambda1}), we
conclude that
$$
H^0(T^*C,\Lambda^1(T^*C))\isom H^0(C,K_C)\dsum \mathbb{C}
\cdot \tau.
$$
\end{proof}

\begin{lem}
Let $s\in V^* _{GL}$ be a point such that 
the spectral curve $C_s$ is nonsingular.
Then we have an isomorphism
 \begin{equation}
 \label{eq:H0onCs}
H^0(C_s,\mathcal{T}T^*C)\isom  H^0(C_s,\Lambda^1(T^*C)) \isom
H^0(T^*C,\Lambda^1(T^*C)) .
\end{equation}
\end{lem}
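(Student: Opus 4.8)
The first isomorphism in (\ref{eq:H0onCs}) is immediate: as recorded just before the preceding Proposition, the holomorphic symplectic form $\omega_C=-d\tau$ identifies $\mathcal{T}T^*C\isom\Lambda^1(T^*C)$ as locally free sheaves on $T^*C$, and restricting this identification to $C_s$ gives $H^0(C_s,\mathcal{T}T^*C)\isom H^0(C_s,\Lambda^1(T^*C))$. For the second isomorphism my plan is to show that the restriction map
$$
\rho:H^0(T^*C,\Lambda^1(T^*C))\longrightarrow H^0(C_s,\Lambda^1(T^*C))
$$
arising from $0\to\Lambda^1(T^*C)\tensor\mathcal{O}_{T^*C}(-C_s)\to\Lambda^1(T^*C)\to\Lambda^1(T^*C)|_{C_s}\to0$ is an isomorphism, where I use $\mathcal{O}_{T^*C}(C_s)\isom\pi^*K_C^{\tensor n}$. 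Since the preceding Proposition gives $\dim H^0(T^*C,\Lambda^1(T^*C))=g+1$, it is enough to prove that the target has the same dimension $g+1$ and that $\rho$ is injective.

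Both inputs I would read off from the relative cotangent sequence of $\pi:T^*C\to C$,
$$
0\longrightarrow\pi^*K_C\longrightarrow\Lambda^1(T^*C)\longrightarrow\pi^*K_C^{-1}\longrightarrow0,
$$
using that the vertical cotangent sheaf of the total space $T^*C$ of $K_C$ is $\pi^*K_C^{-1}$. Restricting this sequence of vector bundles to $C_s$ and applying $\pi_*$ along the degree-$n$ map $\pi:C_s\to C$, together with $\pi_*\mathcal{O}_{C_s}\isom\dsum_{i=0}^{n-1}K_C^{\tensor-i}$ and the projection formula, I obtain $H^0(C_s,\pi^*K_C)\isom\dsum_{i=0}^{n-1}H^0(C,K_C^{\tensor(1-i)})\isom H^0(C,K_C)\dsum\mathbb{C}$, of dimension $g+1$ (the summands with $i\ge2$ vanish because $\deg K_C=2g-2>0$), while $H^0(C_s,\pi^*K_C^{-1})\isom\dsum_{i=0}^{n-1}H^0(C,K_C^{\tensor(-1-i)})=0$. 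The restricted cotangent sequence then forces $\dim H^0(C_s,\Lambda^1(T^*C))=g+1$, matching the source. For injectivity, $\ker\rho=H^0(T^*C,\Lambda^1(T^*C)\tensor\pi^*K_C^{\tensor-n})$; twisting the relative cotangent sequence by $\pi^*K_C^{\tensor-n}$ and using that $\pi$ is affine with $\pi_*\mathcal{O}_{T^*C}\isom\Sym(K_C^{-1})=\dsum_{k\ge0}K_C^{\tensor-k}$, each summand $H^0(C,K_C^{\tensor(1-n-k)})$ and $H^0(C,K_C^{\tensor(-1-n-k)})$ has negative degree for $n\ge2$, hence vanishes, so $\ker\rho=0$.

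Granting these two computations, $\rho$ is an injective linear map between spaces of equal finite dimension $g+1$ and is therefore an isomorphism, which is the asserted second isomorphism in (\ref{eq:H0onCs}). The step I expect to need the most care is the surjectivity of $\rho$: one cannot argue it directly from vanishing of $H^1(T^*C,\Lambda^1(T^*C)\tensor\pi^*K_C^{\tensor-n})$, since this group is infinite-dimensional and nonzero and only receives the cokernel of $\rho$. The device that avoids this is the exact dimension count above, so that injectivity alone upgrades to an isomorphism; everything ultimately rests on $\deg K_C>0$, which annihilates all the negative-degree summands, and on $n\ge2$ (the rank-one case being degenerate, as then $C_s\isom C$ and $\tau$ restricts to a pullback).
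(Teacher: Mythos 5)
Your argument is correct, and it coincides with the paper's proof on the first half: the identification $\mathcal{T}T^*C\isom\Lambda^1(T^*C)$ via $\omega_C=-d\tau$ handles the first isomorphism, and injectivity of the restriction map is obtained exactly as in the paper from the vanishing of $H^0\big(T^*C,\Lambda^1(T^*C)\tensor\pi^*K_C^{\tensor -n}\big)$ for $n\ge 2$ (the paper deduces this from its earlier computation $H^0(T^*C,\Lambda^1(T^*C)\tensor\mathcal{O}(-C))\isom\mathbb{C}\cdot\tau$, while you recompute it directly by pushing the twisted relative cotangent sequence down the affine map $\pi$ and killing each summand $H^0(C,K_C^{\tensor(1-n-k)})$ by negativity of degree; the two computations are equivalent). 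Where you genuinely diverge is surjectivity. The paper extends a section $\beta$ of $\Lambda^1(T^*C)|_{C_s}$ to a meromorphic $1$-form on $T^*C$ and argues that its polar divisor, being forced to miss $C_s$, must be empty because every divisor of $T^*C$ meets $C_s\sim nC$; you instead compute $\dim H^0(C_s,\Lambda^1(T^*C)|_{C_s})=g+1$ from the restricted sequence $0\to\pi^*K_C\to\Lambda^1(T^*C)\to\pi^*K_C^{-1}\to 0$ together with $\pi_*\mathcal{O}_{C_s}\isom\dsum_{i=0}^{n-1}K_C^{\tensor -i}$, and conclude by matching dimensions with the source. Your route buys rigor at the delicate step: the paper's extension argument leaves implicit both why $\beta$ extends meromorphically at all and why a polar divisor meeting $C_s$ is excluded, whereas your count is a mechanical projection-formula computation resting only on $\deg K_C>0$ and $n\ge 2$; it also identifies the common space concretely as $H^0(C,K_C)\dsum\mathbb{C}\cdot\tau$, which is what the subsequent Kodaira--Spencer argument in the paper actually uses. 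What it costs is the extra computation of the target's dimension, which the paper's geometric argument avoids. Your closing caveat — that one cannot get surjectivity from an $H^1$ vanishing because $H^1\big(T^*C,\Lambda^1(T^*C)\tensor\pi^*K_C^{\tensor-n}\big)$ is infinite-dimensional and nonzero — is accurate and is precisely the reason both proofs must do something other than the naive long-exact-sequence argument.
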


\begin{proof}
The line bundle on $T^*C$ that corresponds to the
divisor $C_s\subset T^*C$ is $\pi^*K_C^{\tensor n}$. 
Thus 
$$
\mathcal{O}_{T^*C}(-C_s) \isom 
\pi^*K_C^{\tensor (-n)}\isom
\mathcal{O}_{T^*C}(-nC).
$$
As above, let us consider an exact sequence
$$
0\longrightarrow\Lambda^1(T^*C)\tensor \mathcal{O}(-nC)
\longrightarrow\Lambda^1(T^*C)
\longrightarrow\Lambda^1(T^*C)\tensor \mathcal{O}_{C_s}
\longrightarrow0
$$
and its cohomology sequence
\begin{multline*}
0 \longrightarrow
H^0(T^*C, \Lambda^1(T^*C)\tensor \mathcal{O}(-nC))
\longrightarrow
H^0(T^*C,\Lambda^1(T^*C))\\
\overset{q}{\longrightarrow}
H^0(C_s,\Lambda^1(T^*C)\tensor \mathcal{O}_{C_s}).
\end{multline*}
From (\ref{eq:H0lambda1}), we have
$$
H^0(T^*C, \Lambda^1(T^*C)\tensor \mathcal{O}_{T^*C}(-nC)) = 0
$$
for $n\ge 2$, hence $q$ is injective.

Take 
$\beta\in 
H^0(C_s,\Lambda^1(T^*C)\tensor \mathcal{O}_{C_s})$,
and extend it as a meromorphic $1$-form on $T^*C$. 
Since $\deg K_C = 2g-2>0$,
 every divisor of $T^*C$ intersects with the
$0$-section $C$, and since $C_s\sim nC$ as a divisor, 
it also intersects with $C_s$. If $D\subset T^*C$ is the pole divisor of $\beta$, then it
cannot intersect with $C_s$, hence $D=\emptyset$.
Therefore,  $q$ is surjective.
\end{proof}

Let us go back to the Kodaira-Spencer map (\ref{eq:KS}).
We now know from (\ref{eq:H0s}) and (\ref{eq:H0onCs}) that
\begin{equation}
\label{eq:KS2}
0\longrightarrow H^0(C,K_C)\dsum \mathbb{C}\cdot \tau
\overset{\iota}{\longrightarrow}
\bigoplus_{i=1} ^n H^0(C,K_C ^{\tensor i})
\overset{\kappa}{\longrightarrow} 
H^1(C_s,K_{C_s} ^{-1}).
\end{equation}
The $H^0(C,K_C)$-factor of the second term of
 (\ref{eq:KS2}) maps to the first
component of the third term via the injective homomorphism
$\iota$. The tautological $1$-form $\tau$ on $T^*C$ that appears
as the second factor of the second term is mapped to a diagonal 
ray in the third term.

To see this fact, we recall that the tangent 
and cotangent sheaves are isomorphic on $T^*C$ through
the symplectic form $\omega_C = -d\tau$. Let $v$ be the
vector field on 
$T^*C$ corresponding to $\tau$ through this isomorphism,
i.e., $\tau = \omega_C(\cdot,v)$. This vector field
$v$ represents the $\mathbb{C}^*$-action on 
$T^*C$ along fiber. In terms of a local coordinate
system, these correspondences are  clearly described.
Choose a local coordinate $z$ on the algebraic curve $C$
around a point $p\in C$,
and denote by $x$ the linear coordinate on $T_p ^*C$
with respect  to the basis $dz_p$. Then at the
point $(p,x dz_p)\in T^*C$ 
we have the following expressions:
$$
\begin{cases}
\tau = x \, dz\\
\omega_C=dz\wedge d x\\
v =x\, \frac{\partial}{\partial x}\,.
\end{cases}
$$
The $\lam\in\mathbb{C}^*$ action on $T^*C$ 
generated by  the vector fields $v$ produces a
displacement of $C_s\subset T^*C$ to $C_{\lam\cdot s}$,
which corresponds to the equivariant action of $\lam$ 
on $\mathcal{H}_C(n,d)$ as described in
(\ref{eq:scalarequivariance}). 
In terms of the holomorphic $1$-form $\tau$, its restriction
on $C_s$ gives an element  
\begin{multline*}
\tau|_{C_s}\in H^0(C_s,K_{C_s}) \isom
H^0(C,\pi_*\mathcal{O}_{C_s}\tensor K_C ^{\tensor n})
\isom
H^0(C,\dsum_{i=1} ^n K_C ^{\tensor i})\\
= \bigoplus_{i=1} ^n H^0(C,K_C ^{\tensor i}).
\end{multline*}
This is the image $\iota(\tau)$ of (\ref{eq:KS2}).

 Summing up, we have constructed an injective homomorphism
 \begin{equation}
 \label{eq:injectiveKS}
 V^* _{SL}/\mathbb{C} \isom
\big(\bigoplus_{i=1} ^n H^0(C,K_C ^{\tensor i})\big)\big/
\iota\big( H^0(C,K_C)\dsum \mathbb{C}\cdot \tau\big)
\overset{\bar{\kappa}}{\longrightarrow}
H^1(C_s,K_{C_s} ^{-1}).
\end{equation}
 The image of $\bar{\kappa}$
  represents the generic Kodaira-Spencer class of 
 the Hitchin fibration $PH$ of (\ref{eq:PH}).
 This completes the proof of Theorem~\ref{thm:effective}.

 \section{Symplectic quotient of the Higgs moduli space
 and Prym fibrations}
 \label{sect:quotient}

The Hamiltonian vector fields corresponding to the
coordinate components of the Hitchin map
$\mathcal{H}_C(n,d)\rightarrow V^* _{GL}$ are constant
Jacobian flows along each fiber of the map. Suppose we have
a direct sum decomposition of $V_{GL}$ into two Lie subalgebras
$$
V_{GL} \overset{\text{def}}{=} \bigoplus_{i=0} ^{n-1}
H^1(C,K_C ^{\tensor -i})
= \mathfrak{g}_1\dsum \mathfrak{g}_2.
$$
Then there are two possible ways to construct new algebraically
completely integrable Hamiltonian systems.
If the $\mathfrak{g}_1$-action on $\mathcal{H}_C(n,d)$
is integrated to a group action, then  it
 is Hamiltonian by definition and we can construct the symplectic
quotient. Or if the $\mathfrak{g}_2$-action 
is integrated to a group action instead, then we may find a
family of $\mathfrak{g}_2$-orbits in $\mathcal{H}_C(n,d)$
fibered over  the dual Lie algebra
$\mathfrak{g}_2 ^*$. 
An important difference between real symplectic geometry 
and holomorphic symplectic geometry is that in the latter case
integrations of the same Lie algebra may generate
different (non-isomorphic) Lie groups. Consequently,
the idea of symplectic quotient has to be generalized so
that we can allow  a \emph{family} of groups acting 
on a symplectic manifold.
The discovery of 
Hausel and Thaddeus in \cite{HT} is that the above
 two constructions lead
to \emph{mirror symmetric} pairs of Calabi-Yau spaces
in the sense of Strominger-Yau-Zaslow \cite{SYZ}.
 In this section we consider two cases, the
 $SL$-$PGL$ duality and the $Sp_{2m}$-$SO_{2m+1}$ duality.

The
 $SL$-$PGL$ duality comes from the decomposition
$$
V_{GL} = H^1(C,\mathcal{O}_C)
\oplus \left(\bigoplus_{i=i} ^{n-1}
H^1(C,K_C ^{\tensor -i})\right).
$$
Obviously,  the vector fields 
generated by the $H^1(C,\mathcal{O}_C)$-action
 are integrable to the $\Jac(C)$-action everywhere 
 on $\mathcal{H}_C(n,d)$. Therefore, we do have the usual
 symplectic quotient mod $\Jac(C)$. On the other hand,
 the integration of the other Lie algebra 
 $$
 V_{SL} \overset{\text{def}}{=}
 \bigoplus_{i=i} ^{n-1}
H^1(C,K_C ^{\tensor -i})
$$
produces different Lie groups, called \emph{Prym varieties},
 along each fiber of the
Hitchin fibration.

The spectral covering $\pi:C_s\rightarrow C$ induces
two group homomorphisms dual to one another,
 the pull-back $\pi^*$ and
the \emph{norm map} $\Nm_\pi$ defined by
\begin{equation}
\label{eq:normanddual}
\begin{aligned}
\pi^*: \;&\Jac(C)\owns L\longmapsto \pi^*L\in \Jac(C_s)\\
\Nm_\pi: \;&\Jac(C_s)\owns \mathcal{L}\longmapsto
\det(\pi_* \mathcal{L})\tensor \det(\pi_*\mathcal{O}_{C_s})^{-1}
\in \Jac(C).
\end{aligned}
\end{equation}
In terms of divisors the norm map can be defined alternatively by
$$
\Pic(C_s)\owns \sum_{p\in C_s} m(p)\cdot p
\longmapsto \sum_{p\in C_s} m(p)\cdot \pi(p)
\in\Jac(C).
$$
The \emph{Prym variety}  and the 
\emph{dual Prym variety} 
\begin{equation}
\label{eq:prymanddual}
\begin{aligned}
\Prym(C_s/C) &\overset{\text{def}}{=} \Ker(\Nm_\pi)\\
\Prym^*(C_s/C) &\overset{\text{def}}{=} \Jac(C_s)/\pi^*\Jac(C)
\end{aligned}
\end{equation}
constructed by using these 
homomorphisms
are Abelian varieties of dimension
$g(C_s)-g(C)$ and are  \emph{dual} to one another. 
The algebraically completely integrable Hamiltonian systems
with these Abelian fibrations, that are 
naturally constructed from $\mathcal{H}_C(n,d)$,
then become  SYZ-mirror symmetric.

We have shown in Section~\ref{sect:deformation}
  that the $\Jac(C)$-action on $\mathcal{H}_C(n,d)$ is 
Hamiltonian. So 
 we can define
the symplectic quotient
\begin{equation}
\label{eq:ph}
\mathcal{PH}_C(n,d)
\overset{\text{def}}{=}\mathcal{H}_C(n,d)/\!\!/\Jac(C) = H_1 ^{-1}(0)/\Jac(C).
\end{equation}
This is a symplectic space of
 dimension  $2(n^2-1)(g-1)$ modeled by
 the moduli space of stable principal $PGL_n(\mathbb{C})$-bundles
 on $C$ \cite{H1, H2}. 
Since
the $\Jac(C)$-action on $\mathcal{H}_C(n,d)$ preserves 
the Hitchin fibration, we have an induced Lagrangian
fibration
\begin{equation}
\label{eq:Hpgl}
H_{PGL}:\mathcal{PH}_C(n,d)\longrightarrow V^* _{SL}
=\bigoplus_{i=2} ^n H^0(C,K_C ^{\tensor i}).
\end{equation}
It's $0$-fiber  is $H_{PGL} ^{-1}(0) = \mathcal{U}_C(n,d)/\Jac(C)$.
Following \cite{MFK} we denote by $\mathcal{SU}_C(n,d)$ the
moduli space of stable vector bundles with a fixed determinant line
bundle. This is a fiber of the determinant map
\begin{equation}
\label{eq:det}
\mathcal{U}_C(n,d)\owns E\longmapsto \det E\in \Pic^d(C),
\end{equation}
and is independent of the choice of the value of the determinant. 
Note that (\ref{eq:det})  is a non-trivial fiber bundle. The
equivariant  $\Jac(C)$-action on (\ref{eq:det}) is given by
\begin{equation}
\label{eq:equivariantaction}
\begin{CD}
\mathcal{U}_C(n,d) @>{\tensor L}>> \mathcal{U}_C(n,d)\\
@V{\text{det}}VV @VV{\text{det}}V\\
\Pic^d(C) @>>{\tensor L^{\tensor n}}> \Pic^d(C)
\end{CD}
\qquad L\in \Jac(C).
\end{equation}
The isotropy subgroup of the $\Jac(C)$-action on 
$\mathcal{U}_C(n,d)$ is 
the group of $n$-torsion points
\begin{equation}
\label{eq:Jn}
J_n(C) \overset{\text{def}}{=}\{L\in \Jac(C)\,|\,
L^{\tensor n}=\mathcal{O}_C\} \isom
H^1(C,\mathbb{Z}/n\mathbb{Z}) ,
\end{equation}
since $E\tensor L\isom E$ implies $\det(E)\tensor L^{\tensor n}
\isom \det (E)$. 
Choose a reference line bundle $L_0\in \Pic^d(C)$ and 
consider a degree $n$ covering 
$$
\nu : \Pic^d(C)\owns L\tensor L_0\longmapsto 
L^{\tensor n}\tensor L_0\in\Pic^d(C) , \qquad L\in \Jac(C) .
$$
Then the pull-back bundle $\nu^*\mathcal{U}_C(n,d)$
on $\Pic^d(C)$ becomes trivial:
$$
\nu^*\mathcal{U}_C(n,d) = \Pic^d(C)\times \mathcal{SU}_C(n,d) .
$$
The quotient of this product by the diagonal action of 
$J_n(C)$ is the original moduli space:
\begin{equation}
\label{eq:Jnquotient1}
\big( \Pic^d(C)\times \mathcal{SU}_C(n,d)\big)\big/
J_n(C) \isom \mathcal{U}_C(n,d).
\end{equation}
It is now clear that 
$$
\mathcal{U}_C(n,d)/\Jac(C) \isom \mathcal{SU}_C(n,d)/
J_n(C) .
$$

What are the other fibers of (\ref{eq:Hpgl})?
Let $s\in V^* _{SL}\cap U_\reg$ be a point such that
$C_s$ is non-singular. 
We have already noted that  the covering map 
$\pi:C_s \rightarrow C$ induces an injective  homomorphism
$
\pi^*: \Jac(C)\owns L \longmapsto \pi^*L\in  \Jac(C_s).
$
Therefore, 
the fiber $H_{PGL} ^{-1}(s)$ is isomorphic to the 
dual Prym variety 
${\Prym}^*(C_s/ C)$.
Similarly to the equivariant action (\ref{eq:equivariantaction}),
we have 
\begin{equation}
\label{eq:equivariantjac}
\begin{CD}
\Jac(C_s) @>{\tensor L}>> \Jac(C_s)\\
@V{\Nm}VV @VV{\Nm}V\\
\Jac(C) @>>{\tensor L^{\tensor n}}> \Jac(C)
\end{CD}
\qquad L\in \Jac(C).
\end{equation}
By the same argument  of (\ref{eq:Jnquotient1}), we obtain
\begin{equation}
\label{eq:Jnquotient2}
\big(\Prym(C_s/C)\times \Jac(C)\big)\big/J_n(C) \isom \Jac(C_s).
\end{equation}
From (\ref{eq:prymanddual}) and (\ref{eq:Jnquotient2}), it follows
that
$$
{\Prym}^*(C_s/ C) =\Prym(C_s/ C) /J_n(C).
$$
We have thus established

\begin{thm}[\cite{H2,H3}]
The natural fibration 
$$
H_{PGL}:\mathcal{PH}_C(n,d)\longrightarrow V^* _{SL}
=\bigoplus_{i=2} ^n H^0(C,K_C ^{\tensor i})
$$
of  \emph{(\ref{eq:Hpgl})} is a Lagrangian dual Prym
fibration with respect to the canonical holomorphic
symplectic form $\bar{\omega}$ on $\mathcal{PH}_C(n,d)$.
\end{thm}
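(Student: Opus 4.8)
The plan is to split the statement into two parts: that the generic fibre of $H_{PGL}$ is the dual Prym variety $\Prym^*(C_s/C)$, and that this fibre is Lagrangian for the reduced form $\bar\omega$. The first part is essentially assembled from computations already made above. For $s\in V^* _{SL}\cap U_\reg$ the Hitchin fibre $H^{-1}(s)\isom\Jac(C_s)$ carries the $\Jac(C)$-action as translation by the subtorus $\pi^*\Jac(C)$, and since $\pi^*$ is injective this action is free; hence the quotient is precisely $\Jac(C_s)/\pi^*\Jac(C)=\Prym^*(C_s/C)$. The substantive remaining task is therefore the Lagrangian property.

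First I would pin down $\bar\omega$ through Marsden--Weinstein reduction. Writing $i:H_1 ^{-1}(0)\hookrightarrow\mathcal{H}_C(n,d)$ for the inclusion of the zero level set of the moment map $H_1=\tr\phi$ and $p:H_1 ^{-1}(0)\to\mathcal{PH}_C(n,d)$ for the quotient projection, $\bar\omega$ is the unique holomorphic symplectic form satisfying $p^*\bar\omega=i^*\omega$ on the locus where the $\Jac(C)$-action is free and $0$ is a regular value of $H_1$.

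Next I would note that for $s\in V^* _{SL}$ the first characteristic coefficient $s_1=-\tr\phi$ vanishes, so $\tr\phi\equiv 0$ on $H^{-1}(s)$ and hence $H^{-1}(s)\subset H_1 ^{-1}(0)$; this fibre is $\Jac(C)$-invariant and satisfies $p(H^{-1}(s))=H_{PGL} ^{-1}(s)$. The Lagrangian property then follows from two ingredients. The vanishing $\omega|_{H^{-1}(s)}\equiv 0$ established above shows that $i^*\omega$ restricts to zero along $H^{-1}(s)$, whence $p^*\bar\omega=i^*\omega$ forces $\bar\omega$ to vanish along $H_{PGL} ^{-1}(s)$; thus the fibre is isotropic. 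A dimension count then finishes the argument: $\dim\Prym^*(C_s/C)=g(C_s)-g(C)=\big(n^2(g-1)+1\big)-g=(n^2-1)(g-1)$, which is exactly half of $\dim\mathcal{PH}_C(n,d)=2(n^2-1)(g-1)$. An isotropic subvariety of half the ambient dimension is Lagrangian.

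The main obstacle I anticipate is not the isotropy or the dimension count but the geometry of the reduction itself. The $\Jac(C)$-action fails to be free everywhere --- on the zero section $\mathcal{U}_C(n,d)$ the isotropy is the $n$-torsion group $J_n(C)$ --- so $\mathcal{PH}_C(n,d)$ is a priori only an orbifold and $\bar\omega$ is defined cleanly only on the free locus. I would handle this as in the $GL$ case: the action is free along the regular fibres because $\pi^*$ is injective into $\Jac(C_s)$, so $\bar\omega$ is a genuine holomorphic symplectic form there, and the complement over which it is undefined has codimension at least two, so $\bar\omega$ extends holomorphically across it by the same Hartogs-type argument used earlier to extend $\omega$ to all of $\mathcal{H}_C(n,d)$. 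Reconciling this extension with the torsion quotient $\Prym^*(C_s/C)=\Prym(C_s/C)/J_n(C)$ is the one point that genuinely requires care.
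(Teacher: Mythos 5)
Your proposal is correct and follows essentially the same route as the paper: both identify the generic fibre as $\Jac(C_s)/\pi^*\Jac(C)=\Prym^*(C_s/C)$ via the injectivity of $\pi^*$, with $\bar\omega$ coming from the symplectic quotient $H_1^{-1}(0)/\Jac(C)$ and the moment map $H_1=\tr\phi$. The only difference is that you make the Lagrangian property explicit (via $p^*\bar\omega=i^*\omega$, the vanishing of $\omega$ on $H^{-1}(s)\subset H_1^{-1}(0)$, and the half-dimension count $g(C_s)-g=(n^2-1)(g-1)$), whereas the paper treats this as implicit in the reduction; your caution about the non-free locus is likewise sound and consistent with the codimension-two extension arguments the paper uses elsewhere.
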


We recall that the Higgs moduli space $\mathcal{H}_C(n,d)$
contains the cotangent bundle $T^*\mathcal{U}_C(n,d)$
as an open dense  subspace, and that the holomorphic 
symplectic form $\omega$ is the canonical symplectic form
on this cotangent bundle. Similarly, we can show the following

\begin{prop}
\label{prop:PGLcotangent}
The symplectic form $\bar{\omega}$ on 
$\mathcal{PH}_C(n,d)$ given by the symplectic quotient
is the canonical cotangent symplectic form on the cotangent
bundle
$$
T^*\big(\mathcal{SU}_C(n,d)/J_n(C)\big)\subset 
\mathcal{PH}_C(n,d).
$$
\end{prop}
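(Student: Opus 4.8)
The plan is to recognize $\bar\omega$ as the symplectic form produced by the classical cotangent-bundle reduction theorem at zero momentum, applied to the cotangent lift of the $\Jac(C)$-action, and then to rewrite the resulting base by means of the isomorphism (\ref{eq:Jnquotient1}). First I would record the two facts already established in Section~\ref{sect:Hitchin}: on the open dense locus $T^*\mathcal{U}_C(n,d)\subset\mathcal{H}_C(n,d)$ the action $(E,\phi)\mapsto(E\tensor L,\phi)$ is exactly the cotangent lift of the base action $E\mapsto E\tensor L$ on $\mathcal{U}_C(n,d)$, and its moment map is the first Hitchin coordinate $H_1(E,\phi)=\tr(\phi)$ of (\ref{eq:momentH1}). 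We are therefore precisely in the setting of cotangent reduction, and the symplectic quotient (\ref{eq:ph}), restricted to this locus, is
\begin{equation*}
\big(H_1^{-1}(0)\cap T^*\mathcal{U}_C(n,d)\big)\big/\Jac(C).
\end{equation*}

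The core step is to show that the tautological $1$-form $\eta$ of (\ref{eq:eta}) descends, under the reduction, to the tautological $1$-form of a cotangent bundle. The standard identity for a cotangent-lifted action gives $\iota_{X_\xi}\eta=\langle H_1,\xi\rangle$ for every $\xi$ in the Lie algebra $H^1(C,\mathcal{O}_C)$ of $\Jac(C)$, where $X_\xi$ is the corresponding generating vector field; this is the one computation to be verified in the present coordinates, and it follows directly from the Darboux form (\ref{eq:omegaevaluation}) of $\omega$ together with the identification of $H^1(C,\mathcal{O}_C)$ with the first summand of (\ref{eq:tangent2}). Consequently, on the zero level $H_1^{-1}(0)$ the form $\eta$ annihilates every orbit direction, and since $\eta$ is $\Jac(C)$-invariant (being natural under the cotangent lift, equivalently by Cartan's formula and the moment map relation), its restriction to $H_1^{-1}(0)\cap T^*\mathcal{U}_C(n,d)$ is basic for the quotient fibration and descends to a $1$-form $\bar\eta$. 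Dually, a covector $\phi\in T^*_E\mathcal{U}_C(n,d)$ lies in $H_1^{-1}(0)$ exactly when $\langle\phi,X_\xi(E)\rangle=0$ for all $\xi$, i.e. when it annihilates the tangent to the $\Jac(C)$-orbit through $E$, so such covectors are pullbacks of covectors on $\mathcal{U}_C(n,d)/\Jac(C)$. Passing to the quotient then identifies the reduced space with $T^*\big(\mathcal{U}_C(n,d)/\Jac(C)\big)$ and $\bar\eta$ with its tautological $1$-form; hence $\bar\omega=-d\bar\eta$ is the canonical cotangent symplectic form. Rewriting the base by $\mathcal{U}_C(n,d)/\Jac(C)\isom\mathcal{SU}_C(n,d)/J_n(C)$ from (\ref{eq:Jnquotient1}) yields the asserted bundle $T^*\big(\mathcal{SU}_C(n,d)/J_n(C)\big)$.

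The main obstacle is that the $\Jac(C)$-action is not free: it is only generically free on $\mathcal{U}_C(n,d)$, and the identification (\ref{eq:Jnquotient1}) of the quotient involves the finite group $J_n(C)$, so that $\mathcal{SU}_C(n,d)/J_n(C)$ is properly an orbifold. I would dispose of this exactly as the symplectic structure was handled throughout Section~\ref{sect:Hitchin}: carry out the reduction on the open dense locus where the action is free and the quotient is smooth, observing that the finite stabilizers have trivial infinitesimal action and hence contribute nothing to the moment map or to the descended form, and then extend the resulting identity of $\bar\omega$ with the canonical cotangent form across the complementary locus, which has codimension at least two. This last extension is precisely the mechanism, used already in Section~\ref{sect:Hitchin} to extend $\omega$ from $T^*\mathcal{U}_C(n,d)$ to all of $\mathcal{H}_C(n,d)$, that upgrades the conclusion from the dense cotangent locus to all of $\mathcal{PH}_C(n,d)$.
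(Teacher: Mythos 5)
Your proposal is correct, and it reaches the same conclusion by a genuinely different route. The paper never touches the tautological $1$-form in this proof: instead it computes the tangent and cotangent spaces of the quotient base directly, using the exact sequence $0\to\mathcal{O}_C\to\End(E)\to\End(E)/\mathcal{O}_C\to 0$ to identify $T_E\big(\mathcal{SU}_C(n,d)/J_n(C)\big)\isom H^1(C,\End(E))/H^1(C,\mathcal{O}_C)$, and the Serre-dual trace sequence $0\to\End_0(E)\tensor K_C\to\End(E)\tensor K_C\to K_C\to 0$ (with surjectivity of the trace on $H^0$ because $\mathcal{O}_C\subset\End(E)$) to identify the cotangent space with $H^0(C,\End_0(E)\tensor K_C)$; since this last space is exactly the fiberwise zero level of the moment map $H_1=\tr$, the identification of the reduced space with the cotangent bundle of the quotient follows. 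You instead invoke the general cotangent-bundle reduction theorem at zero momentum and verify its hypotheses, with the key step being the identity $\iota_{X_\xi}\eta=\langle H_1,\xi\rangle$ and the consequent descent of $\eta$ to the tautological form of the quotient cotangent bundle. Your annihilator characterization of $H_1^{-1}(0)$ is precisely the dual statement of the paper's trace exact sequence, so the two arguments are equivalent at that point; what your version adds is an explicit justification that the \emph{symplectic form} (not merely the underlying space) descends to the canonical one, a step the paper compresses into its final sentence, while the paper's version makes concrete and cohomological what your general-machinery argument leaves abstract. Your treatment of the non-free locus by reduction on the open dense stratum followed by codimension-two extension is consistent with how the paper handles $\omega$ on $\mathcal{H}_C(n,d)$ in Section~2 and fills a point the paper's proof does not address explicitly.
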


\begin{proof}
Let $E$ be a stable vector bundle on $C$. The exact sequence
$$
\begin{CD}
0@>>> \mathcal{O}_C
@>>>\End(E)
@>>>\mathcal{Q}=\End(E)/\mathcal{O}_C
@>>>0
\end{CD}
$$
induces a cohomology sequence
$$
\begin{CD}
0@>>>H^1(C,\mathcal{O}_C)
@>>>H^1(C,\End(E)) .
\end{CD}
$$
Therefore, 
$$
T_E \big(\mathcal{SU}_C(n,d)/J_n(C)\big)
\isom 
H^1(C,\End(E))/H^1(C,\mathcal{O}_C).
$$
Dualizing the situation, we have
$$
0\longrightarrow\End_0(E)\tensor K_C
\longrightarrow\End(E)\tensor K_C
\overset{\tr}{\longrightarrow}
K_C
\longrightarrow 0,
$$
where $\End_0(E)$ is the sheaf of traceless endomorphisms 
of $E$. We then have
\begin{multline*}
0\longrightarrow
H^0(C,\End_0(E)\tensor K_C)
\longrightarrow
H^0(C,\End(E)\tensor K_C)\\
\overset{\tr}{\longrightarrow}
H^0(C,K_C)
\longrightarrow 0.
\end{multline*}
The trace homomorphism is globally surjective
because $\mathcal{O}_C$ is contained in $\End(E)$. 
Hence
$$
T^* _E \big(\mathcal{SU}_C(n,d)/J_n(C)\big)
\isom 
H^0(C,\End_0(E)\tensor K_C).
$$
Since the moment map $H_1$ of (\ref{eq:momentH1}) is 
the trace map, we conclude that the symplectic form 
$\bar{\omega}$ on the symplectic quotient 
$\mathcal{PH}_C(n,d)$ is the canonical cotangent symplectic
form on 
the cotangent bundle
$$
T^*  \big(\mathcal{SU}_C(n,d)/J_n(C)\big).$$
\end{proof}

The other reduction of $\mathcal{H}_C(n,d)$ 
consisting of the $V_{SL}$-orbits is
 the moduli 
space $\mathcal{SH}_C(n,d)$
of stable Higgs bundles $(E,\phi)$ 
 with a fixed determinant
$\det(E) \isom L$ and  traceless Higgs fields
$$
\phi\in  H^0(C,\End_0(E)\tensor K_C).
$$
This moduli space is modeled by the moduli space of stable principal
$SL_n(\mathbb{C})$-bundles on $C$ and has dimension 
$2(n^2-1)(g-1)$.
The cotangent bundle $T^* \mathcal{SU}_C(n,d)$
is an open dense subspace of 
$\mathcal{SH}_C(n,d)$.
Since $\tr(\phi)=0$, the Hitchin fibration 
$H$ of (\ref{eq:Hitchinmap}) naturally restricts to
\begin{equation}
\label{eq:Hsl}
H_{SL}:\mathcal{SH}_C(n,d)
\longrightarrow
V^* _{SL} = \bigoplus_{i=2} ^n H^0(C,K_C ^{\tensor i}).
\end{equation}
The $0$-fiber is $H_{SL} ^{-1}(0) = \mathcal{SH}_C(n,d)$.
For a generic $s\in V^* _{SL}$ such that $C_s$ is 
 non-singular, the fiber $H_{SL} ^{-1}(s)$ is the subset 
 of $H^{-1}(s) \isom \Jac(C_s)$ consisting of Higgs bundles
 $(E,\phi)$ such that $\det(E) = L$ is fixed and 
 $\det(x-\phi) =s$. Therefore,  $H_{SL} ^{-1}(s) \isom
\Prym(C_s/C)$.

By comparing $\mathcal{PH}_C(n,d)$ and $\mathcal{SH}_C(n,d)$,
we find that
\begin{equation}
\label{eq:PHandSH}
\mathcal{SH}_C(n,d)\big/J_n(C) \isom \mathcal{PH}_C(n,d),
\end{equation}
where the $J_n(C)$-action on $\mathcal{SH}_C(n,d)$ is defined
by $E\mapsto E\tensor L$ for  $L\in J_n(C)$. 
Since $J_n(C)$ is a finite group and $\mathcal{PH}_C(n,d)$
is a holomorphic symplectic variety, we can define a 
holomorphic symplectic form $\hat{\omega}$ on 
$\mathcal{SH}_C(n,d)$ via the pull-back of the projection
$$
\mathcal{SH}_C(n,d)\longrightarrow 
\mathcal{SH}_C(n,d)\big/J_n(C) .
$$
Obviously $\hat{\omega}$ agrees with the canonical 
cotangent symplectic form on $T^*\mathcal{SU}_C(n,d)$. 
Therefore,

\begin{thm}[\cite{H2,H3}]
The fibration 
$$
H_{SL}:\mathcal{SH}_C(n,d)
\longrightarrow
V^* _{SL} = \bigoplus_{i=2} ^n H^0(C,K_C ^{\tensor i})
$$
is a Lagrangian Prym fibration with respect to the
canonical holomorphic symplectic form
$\hat{\omega}$ on $\mathcal{SH}_C(n,d)$.
\end{thm}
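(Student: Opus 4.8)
The plan is to verify the two defining properties of a Lagrangian Prym fibration in turn: that a generic fiber of $H_{SL}$ is a torsor over a Prym variety, and that it is Lagrangian for $\hat\omega$. The Prym property is essentially already in hand. For generic $s\in V^*_{SL}$ with $C_s$ nonsingular, a point of $H^{-1}(s)\isom\Jac(C_s)$ is a line bundle $\mathcal{L}_E$ with $\pi_*\mathcal{L}_E=E$, and by (\ref{eq:normanddual}) one has $\det E\isom\Nm_\pi(\mathcal{L}_E)\tensor\det(\pi_*\mathcal{O}_{C_s})$; fixing $\det E\isom L$ therefore fixes $\Nm_\pi(\mathcal{L}_E)$, so $H_{SL}^{-1}(s)$ is a torsor over $\Ker(\Nm_\pi)=\Prym(C_s/C)$ from (\ref{eq:prymanddual}). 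What remains is the Lagrangian property, and for this I would exploit the finite covering $p:\mathcal{SH}_C(n,d)\to\mathcal{SH}_C(n,d)/J_n(C)\isom\mathcal{PH}_C(n,d)$ of (\ref{eq:PHandSH}) together with the dual Prym theorem already established for $H_{PGL}$.

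First I would check that $p$ intertwines the two Hitchin maps, $H_{PGL}\circ p=H_{SL}$. This is immediate, since the $J_n(C)$-action $E\mapsto E\tensor L$ leaves the Higgs field — and hence the characteristic section $\det(x-\phi)$ — unchanged, so $H$ is $J_n(C)$-invariant and descends. Thus $p$ carries $H_{SL}^{-1}(s)\isom\Prym(C_s/C)$ onto $H_{PGL}^{-1}(s)\isom\Prym^*(C_s/C)=\Prym(C_s/C)/J_n(C)$, restricting there to the isogeny given by quotienting by $J_n(C)$. Because $\hat\omega=p^*\bar\omega$ by construction, restriction to a fiber yields
$$
\hat\omega|_{\Prym(C_s/C)}=\big(p|_{\Prym(C_s/C)}\big)^*\,\big(\bar\omega|_{\Prym^*(C_s/C)}\big)=0,
$$
the last equality being exactly the Lagrangian statement of the dual Prym theorem. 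Hence every generic fiber is isotropic. To promote isotropic to Lagrangian I would compare dimensions: $\dim\Prym(C_s/C)=g(C_s)-g(C)=\big(n^2(g-1)+1\big)-g=(n^2-1)(g-1)$, precisely half of $\dim\mathcal{SH}_C(n,d)=2(n^2-1)(g-1)$, so a half-dimensional isotropic subvariety is Lagrangian.

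The genuinely delicate point — and the step I expect to require the most care — is the identity $\hat\omega=p^*\bar\omega$ together with the accompanying restriction to fibers, which is clean only where $p$ is \'etale, that is, where $J_n(C)$ acts freely. This fails exactly at bundles admitting a nontrivial torsion automorphism $E\tensor L\isom E$; since a generic stable $E$ has none, $p$ is \'etale over a dense open subset, and intersecting with $U_\reg$ gives a locus on which the whole argument runs and which suffices for the generic statement. As a quotient-free alternative, I would instead argue that $\hat\omega$ is simply the restriction $\omega|_{\mathcal{SH}_C(n,d)}$ of the Hitchin form to the submanifold cut out by $\det E\isom L$ and $\tr\phi=0$: the orthogonality of $\End_0(E)$ and $\mathcal{O}_C\cdot\mathrm{id}$ under the Serre-duality trace pairing shows that the tangent space $H^1(C,\End_0(E))\dsum H^0(C,\End_0(E)\tensor K_C)$ is $\omega$-nondegenerate and that the restricted form coincides with the canonical cotangent form on $T^*\mathcal{SU}_C(n,d)$ of Proposition~\ref{prop:PGLcotangent}. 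Then $\Prym(C_s/C)\subset\Jac(C_s)\isom H^{-1}(s)$ is automatically isotropic, the ambient $\Jac(C_s)$ being $\omega$-Lagrangian by Hitchin's theorem, and the same dimension count concludes.
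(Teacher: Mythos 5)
Your proposal is correct and follows essentially the same route as the paper: the paper also identifies the generic fiber with $\Prym(C_s/C)$ by observing that fixing $\det E$ fixes the norm of $\mathcal{L}_E$, defines $\hat\omega$ as the pull-back of $\bar\omega$ under the finite quotient $\mathcal{SH}_C(n,d)\to\mathcal{SH}_C(n,d)/J_n(C)\isom\mathcal{PH}_C(n,d)$, and notes its agreement with the canonical cotangent form on $T^*\mathcal{SU}_C(n,d)$. Your added care about the \'etale locus and the isotropic-plus-half-dimensional count are sensible elaborations of steps the paper leaves implicit.
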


Hausel and Thaddeus \cite{HT}
shows that

\begin{thm}[Theorem~(3.7) in \cite{HT}]
\label{thm:mirror}
The two fibrations
\begin{equation*}
		\xymatrix{
		\mathcal{SH}_C(n,d)\ar[dr]^{H_{SL}}
		& & 	\mathcal{PH}_C(n,d)\ar[dl]_{H_{PGL}}\\
		& V^* _{SL} &
		}
\end{equation*} 
are mirror symmetric in the sense of Strominger-Yau-Zaslow
 \cite{SYZ}.
\end{thm}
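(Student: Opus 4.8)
The plan is to verify the three conditions in the Strominger--Yau--Zaslow criterion \cite{SYZ}: the two spaces should carry special Lagrangian torus fibrations over a common base, with generic fibers dual tori, and with matching discriminant loci. Most of the required ingredients have already been assembled above, so the proof amounts to organizing them correctly and supplying the hyperk\"ahler reinterpretation.

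First I would fix the common base and the discriminant. Both $H_{SL}$ and $H_{PGL}$ map to the same vector space $V^* _{SL} = \bigoplus_{i=2} ^n H^0(C,K_C ^{\tensor i})$, and a point $s$ is a regular value for either map exactly when the spectral curve $C_s$ is nonsingular, which is an intrinsic condition on $s$. Hence the two fibrations share the open dense regular locus $U_\reg\cap V^* _{SL}$ and the same discriminant. Over this locus the two preceding theorems identify the generic fibers as $\Prym(C_s/C)\isom H_{SL} ^{-1}(s)$ and $\Prym^*(C_s/C)\isom H_{PGL} ^{-1}(s)$, and by the constructions in (\ref{eq:prymanddual}) together with the relation $\Prym^*(C_s/C)=\Prym(C_s/C)/J_n(C)$ these form a pair of \emph{dual} Abelian varieties of dimension $N=(n^2-1)(g-1)$. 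Thus the fiberwise dual-torus condition holds on the nose over the regular locus.

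Second I would pass from the holomorphic Lagrangian fibrations to honest special Lagrangian ones by hyperk\"ahler rotation. The spaces $\mathcal{SH}_C(n,d)$ and $\mathcal{PH}_C(n,d)$ are hyperk\"ahler by Hitchin's construction \cite{H1,H2}, and $\hat\omega$, $\bar\omega$ are the holomorphic symplectic $(2,0)$-forms in the complex structure for which the Hitchin maps are holomorphic Lagrangian fibrations. Rotating within the twistor sphere converts each holomorphic Lagrangian fibration into a special Lagrangian $T^N$-fibration over the same real base. Since the rotation is fiberwise and compatible with the duality $\Prym\leftrightarrow\Prym^*$, the dual-torus relation established above survives it, and combined with the coincidence of discriminants this presents the pair as SYZ mirrors at the level of the underlying torus fibrations.

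The subtle point, and the step I expect to require the most care, is the torsion datum rather than the rotation itself. The isomorphism $\mathcal{SH}_C(n,d)/J_n(C)\isom \mathcal{PH}_C(n,d)$ of (\ref{eq:PHandSH}) shows that the $PGL$ side is an orbifold quotient by the finite group $J_n(C)\isom H^1(C,\mathbb{Z}/n\mathbb{Z})$, acting fiberwise by translation through $n$-torsion via the sequence $0\to J_n(C)\to \Prym(C_s/C)\to \Prym^*(C_s/C)\to 0$. This is precisely where the Hausel--Thaddeus statement carries a nontrivial $B$-field, i.e.\ a flat gerbe: the SYZ mirror of $\Prym(C_s/C)$ with trivial gerbe is $\Prym^*(C_s/C)$ equipped with the gerbe Pontryagin-dual to the $J_n(C)$-translation. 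I would therefore devote the main effort to checking that the quotient (\ref{eq:PHandSH}) induces exactly this dual gerbe, fiberwise over $U_\reg\cap V^* _{SL}$, so that the two fibrations carry mirror-dual $B$-field data and not merely dual underlying tori; matching these gerbe classes across the whole family, rather than on a single fiber, is what upgrades the fiberwise duality to a genuine SYZ equivalence of the two fibrations.
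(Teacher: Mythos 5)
The paper offers no proof of this statement at all: it is imported verbatim as Theorem~(3.7) of \cite{HT}, and the two theorems preceding it in Section~\ref{sect:quotient} merely assemble the ingredients (the common base $V^*_{SL}$ and the identification of generic fibers with $\Prym(C_s/C)$ and $\Prym^*(C_s/C)=\Prym(C_s/C)/J_n(C)$). So there is no in-paper argument to compare yours against; what you have written is, in effect, a reconstruction of the Hausel--Thaddeus proof, and as such it identifies the right skeleton: shared regular locus $U_\reg\cap V^*_{SL}$ and discriminant, fiberwise duality of the Prym and dual Prym varieties, hyperk\"ahler rotation to convert the holomorphic Lagrangian fibrations into special Lagrangian torus fibrations, and the $J_n(C)$-gerbe needed to upgrade bare torus duality to an SYZ equivalence. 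Two caveats keep this a sketch rather than a proof. First, the hyperk\"ahler structure is not visible from the algebraic symplectic forms $\hat\omega$, $\bar\omega$ constructed in this section; it comes from the gauge-theoretic realization of the moduli spaces via the self-duality equations \cite{H1}, so the rotation step genuinely imports non-abelian Hodge theory rather than following from ``Hitchin's construction'' as stated. Second, the fibers are naturally \emph{torsors} over the Pryms --- the isomorphism $H_{SL}^{-1}(s)\isom\Prym(C_s/C)$ depends on a choice of base point, and varies with the degree $d$ --- and the matching of the dual gerbe across the family, which you correctly flag as the crux, is exactly the nontrivial content of Theorem~(3.7) in \cite{HT}; your proposal names this step but does not carry it out, so the proposal should be read as a correct plan of attack that defers its hardest step to the cited source.
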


The effectiveness of the family of Prym and dual Prym varieties
can be established by the same method of
Section~\ref{sect:deformation}. Let us define
 partial projective moduli spaces 
$$
\mathbb{P}(\mathcal{PH}_C(n,d))\overset{\text{def}}{=}
\big(\mathcal{PH}_C(n,d)\setminus H_{PGL}^{-1}(0)\big)\big/
\mathbb{C}^*
$$
and 
$$
\mathbb{P}(\mathcal{SH}_C(n,d))\overset{\text{def}}{=}
\big(\mathcal{SH}_C(n,d)\setminus H_{SL}^{-1}(0)\big)\big/
\mathbb{C}^*.
$$
 The induced 
Hitchin maps are denoted by
\begin{equation}
\label{eq:projectivepryms}
\begin{aligned}
PH_{PGL} : \;
&\mathbb{P}(\mathcal{PH}_C(n,d))
\longrightarrow \mathbb{P}_w(V^* _{SL})\\
PH_{SL} : \;
&\mathbb{P}(\mathcal{SH}_C(n,d))
\longrightarrow \mathbb{P}_w(V^* _{SL}).
\end{aligned}
\end{equation}
We have the following

\begin{thm}
\label{thm:dualprymfibration}
The Prym and dual Prym fibrations 
$$
\begin{aligned}
PH_{SL} : \;
&\mathbb{P}(\mathcal{SH}_C(n,d))
\longrightarrow \mathbb{P}_w(V^* _{SL})\\
PH_{PGL} : \;
&\mathbb{P}(\mathcal{PH}_C(n,d))
\longrightarrow \mathbb{P}_w(V^* _{SL})
\end{aligned}$$
are generically effective.
\end{thm}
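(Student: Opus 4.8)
The plan is to reproduce the Kodaira--Spencer computation of Section~\ref{sect:deformation}, now read through the Prym factor of $\Jac(C_s)$. Because the pull-back $\pi^*$ and the norm map $\Nm_\pi$ of (\ref{eq:normanddual}) present $\Prym(C_s/C)$ and $\Prym^*(C_s/C)$ as mutually dual abelian varieties, and because the finite quotient of fibrations (\ref{eq:PHandSH}) over the common base $\mathbb{P}_w(V^* _{SL})$ induces a fiberwise isogeny, hence an isomorphism on tangent spaces, it suffices to prove that a single period map, say $\bar s\mapsto[\Prym(C_s/C)]$ into the moduli space $\mathcal{A}$ of polarized abelian varieties of dimension $(n^2-1)(g-1)$, has generically injective differential.

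First I would pin down the relevant (co)tangent spaces. The decompositions $\pi_*\mathcal{O}_{C_s}\isom\bigoplus_{i=0}^{n-1}K_C^{\tensor-i}$ and $\tau|_{C_s}\in H^0(C_s,K_{C_s})\isom\bigoplus_{i=1}^{n}H^0(C,K_C^{\tensor i})$ from Section~\ref{sect:deformation} split off the $i=0$ and $i=1$ summands $\pi^*H^1(C,\mathcal{O}_C)$ and $\pi^*H^0(C,K_C)$, which come from the fixed base curve $C$. The complementary summands are precisely $V_{SL}=\bigoplus_{i=1}^{n-1}H^1(C,K_C^{\tensor-i})$ and $V^* _{SL}=\bigoplus_{i=2}^{n}H^0(C,K_C^{\tensor i})$, and they furnish the identifications $T_0\Prym(C_s/C)\isom V_{SL}$ and $H^0(\Prym(C_s/C),\Lambda^1)\isom V^* _{SL}$, matching the Lie-algebra decomposition used in Section~\ref{sect:quotient}.

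Next I would feed in the injection $\bar\kappa$ of (\ref{eq:injectiveKS}) and postcompose with the infinitesimal Prym--Torelli map. The differential of the Jacobian period map is the cup product $c\colon H^1(C_s,\mathcal{T}C_s)\to\Sym^2 H^1(C_s,\mathcal{O}_{C_s})\isom\Sym^2 V_{GL}$; projecting onto the Prym summand $\Sym^2 V_{SL}$ exhibits the differential of the Prym period map as $\mathrm{pr}\circ c\circ\bar\kappa$. Dually, its codifferential is $\bar\kappa^*\circ m$, where $m\colon\Sym^2 V^* _{SL}=\Sym^2 H^0(\Prym(C_s/C),\Lambda^1)\to H^0(C_s,K_{C_s}^{\tensor2})$ is multiplication of Prym differentials, and $\bar\kappa^*\colon H^0(C_s,K_{C_s}^{\tensor2})\isom H^1(C_s,\mathcal{T}C_s)^*\to(V^* _{SL}/\mathbb{C})^*$ is the surjective transpose of $\bar\kappa$. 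Effectiveness of the Prym fibration is therefore equivalent to surjectivity of $\bar\kappa^*\circ m$ onto $T^* _{\bar s}\mathbb{P}_w(V^* _{SL})$ for generic $s$.

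The main obstacle is exactly this surjectivity, an infinitesimal-Torelli statement for Pryms: one must rule out that a nonzero curve deformation recorded by $\bar\kappa$ gets absorbed into the constant sub-abelian variety $\pi^*\Jac(C)$ or into the extension class of $0\to\pi^*\Jac(C)\to\Jac(C_s)\to\Prym^*(C_s/C)\to0$, leaving the Prym factor undeformed. Holding $C$ fixed along the family already annihilates the $\Sym^2\pi^*H^1(C,\mathcal{O}_C)$ directions, which removes the obvious source of degeneracy, so what remains is the nondegeneracy of $m$ on Prym differentials. I would establish this by the same exact-sequence bookkeeping on $T^*C$ as in Section~\ref{sect:deformation}, tracking the weight grading under the $\mathbb{C}^*$-action (\ref{eq:weighted}): the tautological form $\tau$ accounts for the single diagonal direction quotiented out in passing from $V^* _{SL}$ to $V^* _{SL}/\mathbb{C}$, while the classical surjectivity of multiplication maps of global differentials on the high-genus curve $C_s$, with $g(C_s)=n^2(g-1)+1$, forces the remaining base directions to be faithfully recorded by the Prym. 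The identical argument, reading the quotient by $\pi^*\Jac(C)$ in place of $\Ker(\Nm_\pi)$, then settles the dual Prym fibration $PH_{PGL}$.
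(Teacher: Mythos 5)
There is a genuine gap at the center of your argument, and it is worth noting first that you are proving a different (stronger) statement than the paper does. The paper's notion of effectivity, made explicit in the proof of Theorem~\ref{thm:effective}, is effectivity of the underlying family of spectral curves: the whole content is the injectivity of the Kodaira--Spencer map $\bar{\kappa}$ of (\ref{eq:injectiveKS}). For Theorem~\ref{thm:dualprymfibration} the base $\mathbb{P}_w(V^* _{SL})$ and the family of curves $C_s$ are literally the same as in Section~\ref{sect:deformation}, so the paper simply invokes that computation again; no period map or Torelli-type input appears. You instead interpret effectivity as a statement about the induced family of abelian varieties and set up an infinitesimal Prym--Torelli argument via the period map into $\mathcal{A}$. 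That is a legitimate and more ambitious reading, and your identifications $T_0\Prym(C_s/C)\isom V_{SL}$, $H^0(\Prym(C_s/C),\Lambda^1)\isom V^* _{SL}$, and the description of the codifferential as $\bar{\kappa}^*\circ m$ are all correct.

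The gap is that the decisive step --- surjectivity of $\bar{\kappa}^*\circ m$ onto $(V^* _{SL}/\mathbb{C})^*$, equivalently that the image of $\bar{\kappa}$ is detected by $\mathrm{pr}\circ c$ --- is asserted rather than proved. The ``classical surjectivity of multiplication maps'' you invoke is Max Noether's theorem, which gives surjectivity of the full map $\Sym^2 H^0(C_s,K_{C_s})\rightarrow H^0(C_s,K_{C_s}^{\tensor 2})$ for a non-hyperelliptic $C_s$; that is dual to infinitesimal Torelli for $\Jac(C_s)$, not for the Prym. What you actually need is that the products of sections lying in the sub-summand $\Sym^2\big(\bigoplus_{i=2}^{n}H^0(C,K_C^{\tensor i})\big)\subset \Sym^2 H^0(C_s,K_{C_s})$ still separate the Kodaira--Spencer classes in $\bar{\kappa}(V^* _{SL}/\mathbb{C})\subset H^1(C_s,\mathcal{T}C_s)$ --- i.e., that nothing in the image of $\bar{\kappa}$ pairs to zero against all products of Prym differentials. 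This is exactly the hard content of an infinitesimal Prym--Torelli statement for the degree-$n$ spectral covers, it does not follow from the exact-sequence bookkeeping of Section~\ref{sect:deformation} (which only computes $H^0$ of line bundles on $T^*C$ and never touches the quadric/multiplication structure), and such statements are known to fail for special covers, so genericity must actually be argued. Either supply that argument, or retreat to the paper's weaker notion of effectivity of the family of covers $\pi:C_s\rightarrow C$, for which the injectivity of $\bar{\kappa}$ already established in (\ref{eq:injectiveKS}) suffices verbatim for both $PH_{SL}$ and $PH_{PGL}$.
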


For the case of 
$Sp_{2m}(\mathbb{C})$-Hitchin systems, we
 consider Lie subalgebras
 \begin{equation}
 \label{eq:Vspandg}
 \begin{aligned}
  \mathfrak{g}=
&\bigoplus_{i=0} ^{m-1}H^1\!\left(C, K_C ^{\tensor -2i}\right)\\
 V_{Sp} =
   &\bigoplus_{i=0} ^{m-1} H^1\!\left(C,K_C ^{\tensor -2i-1}\right),
  \end{aligned}
 \end{equation}
 and a direct sum decomposition 
\begin{equation}
\label{eq:Spdsum}
V_{GL_{2m}} = 
\mathfrak{g}
\dsum V_{Sp}.
\end{equation}
This time the Hamiltonian flows on $\mathcal{H}_C(n,d)$
generated by elements of $ \mathfrak{g}$
 do not form a group
action. However, if we restrict our attention to 
points of $U_\reg \cap
V^* _{Sp}$ as in (\ref{eq:Vsp}), then the integral of 
the  Lie algebra action
becomes a group action.
Recall that the spectral curve $C_s$ has an involution
induced by $\epsilon$ of (\ref{eq:involutionepsilon}). We denote by
\begin{equation}
\label{eq:degree2cover}
r:C_s\rightarrow C_s '=C_s/\langle \epsilon\rangle
\end{equation}
the natural
projection. It is ramified at the intersection of $C_s$ with the 
$0$-section of
$T^*C$, which is the divisor of $\pi^*K_C$
 on $C_s$  of degree
$4m(g-1)$. Therefore, we find the genus of $C' _s$ by the
Riemann-Hurwitz formula:
\begin{equation}
\label{eq:genusofC'}
g(C' _s) = m(2m-1)(g-1) + 1 = \dim \mathfrak{g}.
\end{equation}
Since $H^0(C' _s,r_*\mathcal{O}_{C_s})$ has a nowhere
vanishing section, we have
\begin{equation}
\label{eq:rdirect}
\begin{CD}
0@>>>\mathcal{O}_{C_s '} @>>>
r_* \mathcal{O}_{C_s}@>>>
 N^{-1}@>>>0
\end{CD}
\end{equation}
with  a line bundle $N$ on $C' _s$ of degree $2m(g-1)$. 
Note that 
$$
N^{\tensor 2} = \det(r_*\mathcal{O}_{C_s})^{\tensor -2} \isom
\Nm_r(\pi^*K_C),
$$
hence $N$ is a square root of the \emph{branch divisor}
$\Nm_r(\pi^*K_C)$ of the covering $r$. 
The exact sequence (\ref{eq:rdirect}) gives
\begin{equation}
\label{eq:Sph1decomp}
H^1(C_s,\mathcal{O}_{C_s})
\isom 
H^1(C' _s,\mathcal{O}_{C' _s}) \dsum
H^1(C' _s,N^{-1}),
\end{equation}
and the projection to the first factor is the differential of  
the norm map
\begin{equation*}
\Nm_r:\Jac(C_s) \owns \mathcal{L}\longmapsto
\det(r_* \mathcal{L})\tensor \det(r_* \mathcal{O}_{C_s})^{-1}
\in\Jac(C' _s).
\end{equation*}

The construction of the $Sp$-Hitchin system 
of Proposition~\ref{prop:SerreandSp} is to reduce
 the $GL$-Hitchin system $\mathcal{H}_C(2m,2m(g-1))$
 by finding the right fibration of groups. Along 
 the fixed-point-set of the Serre duality on this Higgs moduli
 space, the action of 
 $V_{Sp}$
 generates the Prym fibration $\{\Prym(C_s/C' _s)\}_{s\in V^* _{Sp}}$
 since the condition 
 $\mathcal{L}_0 ^* \isom \epsilon^* \mathcal{L}_0$
on $C_s$ 
is the same as $\mathcal{L}_0\in \Prym(C_s/C_s ')$.
Comparing (\ref{eq:Spdsum}) and (\ref{eq:Sph1decomp}), 
we have
$$
H^1(C' _s,\mathcal{O}_{C' _s}) \isom 
\mathfrak{g} \qquad
\text{and}
\qquad
H^1(C' _s,N^{-1}) \isom V_{Sp}.
$$

The \emph{dual} fibration is the result of a kind of 
symplectic quotient of $\mathcal{H}_C(2m,2m(g-1))$
by $\mathfrak{g}$. 
In this quotient we restrict the Hitchin fibration 
to the $0$-fiber of 
$$
\mathfrak{g}^* =
\bigoplus_{i=1} ^{m}H^0\!\left(C, K_C ^{\tensor 2i-1}\right),
$$
and then take the quotient of each fiber by the Lie
group $\Jac(C' _s)$ of $\mathfrak{g}$. 
The result is the dual Prym fibration with a fiber
$$
\Prym^*(Cs/C' _s) = \Jac(C_s)/\Jac(C' _s) =
\Prym(Cs/C' _s) /J_2(C' _s)
$$
over each $s\in V^* _{Sp}$, where $J_2(C' _s)$ denotes
the group of $2$-torsion points of $\Jac(C' _s)$.

\section{Hitchin's integrable systems and the KP equations}
\label{sect:KP}

The partial projective Hitchin fibration 
$$PH:\mathbb{P}(\mathcal{H}_C ^{SL}(n,d))\rightarrow 
\mathbb{P}_w(V^* _{SL})$$ is a generically effective
Jacobian fibration. 
In this section we embed 
$\mathbb{P}(\mathcal{H}_C ^{SL}(n,d))$ into a 
quotient of the Sato Grassmannian. 
There is also a natural embedding of $\mathcal{H}_C(n,n(g-1))$
into a \emph{relative} Grassmannian of \cite{AMP, DM, PM, Q}.
We show that the linear Jacobian
flows on  the Hitchin integrable
system $(\mathcal{H}_C(n,n(g-1)), \omega, H)$
  are exactly the KP equations 
 on the
Grassmannian via this second  embedding.

Following Quandt \cite{Q}, we define

\begin{Def}[Sato Grassmannian] 
\label{def:Sato}
Let $n$ be a positive integer.
The Sato Grassmannian $Gr_n$ is a functor from the category of
schemes to the small category of sets. It assigns to every 
scheme $S$ a set $Gr_n(S)$ consisting of quasi-coherent 
$\mathcal{O}_S$-submodules $W$ of $\mathcal{O}_S((z)) ^{\dsum n}$
such that both the kernel and the cokernel of 
the natural homomorphism 
\begin{equation}
\label{eq:fredholm}
0\longrightarrow\Ker(\rgam_W)\longrightarrow
W
\overset{\rgam_W}\longrightarrow \mathcal{O}_S((z)) ^{\dsum n}\big/ \mathcal{O}_S[[z]] ^{\dsum n}
\longrightarrow\Coker(\rgam_W)\longrightarrow 0
\end{equation}
are coherent $\mathcal{O}_S$ modules. We refer to this condition
simply the \emph{Fredholm condition}. Here we denote by 
$\mathcal{O}_S[[z]]$ the ring of formal power series in $z$ with
coefficients in $\mathcal{O}_S$, and
$\mathcal{O}_S((z)) = \mathcal{O}_S[[z]] +\mathcal{O}_S[z^{-1}]$.
\end{Def}

\begin{rem}
A more general  and powerful 
theory of relative Sato Grassmannians has been
recently established by Plaza Mart\'in \cite{PM}. 
It would be an interesting project to study possible 
relations between \cite{PM} and the geometric Langlands
correspondence.
\end{rem}

\begin{rem}
\label{rem:index}
The Grassmannian $Gr_n(\mathbb{C})$ defined over
the point scheme $\Spec(\mathbb{C})$ has the
structure of an infinite-dimensional pro-ind scheme over
$\mathbb{C}$. If $S$ is irreducible, then $Gr_n(S)$ 
is a disjoint union of an infinite number of 
components \emph{indexed} 
by the Grothendieck group $K(S)$:
\begin{equation}
\label{eq:index}
\index: Gr_n(S)\owns W \longmapsto \text{index}(\rgam_W)
\overset{\text{def}}{=} \Ker(\rgam_W)-\Coker(\rgam_W)\in K(S).
\end{equation}
\end{rem}

\begin{rem}
\label{rem:bigcell}
The \emph{big cell} is the open subscheme of 
the index $0$ piece of the Grassmannian $Gr_n ^0(S)$
consisting of $W$'s such that $\rgam_W$ is an isomorphism.
This is the stage where 
the Lax and Zakharov-Shabat formalisms of integrable nonlinear
partial differential equations, such as KP, KdV, and many other
equations, are interpreted as infinite-dimensional dynamical systems
\cite{SS, SW}. The fundamental result is the identification 
(\ref{eq:groupofpdo}) of the big cell with the
group of $0$-th order monic pseudo-differential operators 
 \cite{LM2, Q, SS, SW}.
\end{rem}

\begin{rem}
\label{rem:topology}
The above $W$ is a closed subset of $\mathcal{O}_S((z))^{\dsum n}$ with respect to 
the topology defined by the filtration
$$
\cdots\supset z^{\nu-1}\cdot \mathcal{O}_S[[z]]^{\dsum n}\supset 
z^{\nu}\cdot \mathcal{O}_S[[z]]^{\dsum n}\supset 
z^{\nu+1}\cdot\mathcal{O}_S[[z]]^{\dsum n}\supset \cdots .
$$
\end{rem}

One of the consequences of 
the  Fredholm condition is that
\begin{equation}
\label{eq:largenu}
W\cap z^\nu\cdot\mathcal{O}_S[[z]] ^{\dsum n}= 0 \qquad \text{for } \nu>> 0.
\end{equation}
Consider the group 
\begin{equation}
\label{eq:groupGamma}
\Gamma_n(S) = \mathcal{O}_S ^\times \cdot
\begin{bmatrix}
1\\
&1\\
&&\ddots\\
&&&1
\end{bmatrix}
+z\cdot
\begin{bmatrix}
\mathcal{O}_S[[z]]\\
&\mathcal{O}_S[[z]]\\
&&\ddots\\
&&&\mathcal{O}_S[[z]]
\end{bmatrix}
\end{equation}
 consisting of $n\times n$ invertible diagonal
matrices with entries in the ring of formal power series whose
constant term is scalar diagonal.
It acts on $Gr_n(S)$ by left-multiplication without fixed points. Indeed, let 
$W\in Gr_n(S)$ and
$c+\rgam\in   \mathcal{O}_S ^\times \cdot I_n +z\cdot \mathcal{O}_S[[z]]
^{\dsum n}$
satisfy  that
$W=(c+\rgam)\cdot W$. This means that $\rgam\cdot w\in W$
for every $w\in W$ since $W$ is an $\mathcal{O}_S$-module.
Since $\rgam\in z\cdot \mathcal{O}_S[[z]] ^{\dsum n}$, it 
then contradicts to (\ref{eq:largenu}) unless 
$\rgam=0$. The \emph{quotient Grassmannian} 
\begin{equation}
\label{eq:quotientgrassmannian}
Z_n \overset{\text{def}}{=}Gr_n/\Gamma_n
\end{equation}
is a smooth infinite-dimensional scheme.
We denote by $\overline{W}$ the point of $Z_n$
corresponding to $W\in Gr_n$.

The tangent space $T_W Gr_n$ of the Sato Grassmannian 
at  $W$ is given  by  the space of continuous
$\mathcal{O}_S$-homomorphisms
$$
T_W Gr_n= \Hom_{\mathcal{O}_S}\big(W,\mathcal{O}_S((z))
 ^{\dsum n}/W\big).
$$
The tangent space to $Z_n$ is then given by
\begin{equation}
\label{eq:tangenttoZ}
T_{\overline{W}}Z_n \isom \Hom_{\mathcal{O}_S}\big(W,\mathcal{O}_S((z)) ^{\dsum n}\big/(
\Gamma_n\cdot W)\big).
\end{equation}
This expression does not depend on the choice of the 
lift $W\in Gr_n$ of $\overline{W}\in Z_n$.

Every element $a\in \mathcal{O}_S((z)) ^{\dsum n}$ defines a
homomorphism
$$
KP(a)_W: W\owns w\longmapsto \overline{a\cdot w}\in
\mathcal{O}_S((z))  ^{\dsum n}\big/(\Gamma_n \cdot W)
$$
through the left multiplication as a diagonal matrix,
which in turn determines a
global vector field
$$
\mathcal{O}_S((z)) ^{\dsum n}\owns a\longmapsto
KP(a)\in H^0(Z_n,TZ_n).
$$
We call $KP(a)$ the $n$-component 
\emph{KP flow} associated with $a$. 
As explained in \cite{LM2, M1984, M1990, Q}, the quotient of the
index zero Grassmannian $Z_n ^0$ is naturally identified with 
the set of \emph{Lax operators} (\ref{eq:Laxoperator}), and the action of 
$\mathcal{O}_S((z)) ^{\dsum n}$ on a Lax operator 
is written as an infinite
system of nonlinear partial differential equations called 
\emph{Lax equations}. This system 
for the case of $S=\Spec(\mathbb{C})$  is
the  $n$-component \emph{Kadomtsev-Petviashvili hierarchy}
\cite{LM2}.

Associated to the Hitchin fibration 
$H:\mathcal{H}_C(n,d)\rightarrow 
V^* _{GL}$
 we have a family of spectral curves:
\begin{equation}
\label{eq:familyofspectral}
		\xymatrix{
		\mathfrak{C}_{V^* _{GL}}(n,d)\;\ar[dr]_{F}
		 \ar[rr]^{\text{inclusion}\quad}
		&& 	\;\; T^*C\times V^* _{GL}\ar[dl]^{p_2}\\
		& \quad V^* _{GL} &
		}
\end{equation} 
Here the fiber $F^{-1}(s)$ of $s\in V^* _{GL}$ is the
spectral curve $C_s\subset T^*C\times \{s\}$, and $p_2$ is 
the projection to the second factor.
We note that the ramification points of the covering $\pi:C_s
\rightarrow C$
are determined by the resultant of the defining equation
\begin{equation}
\label{eq:poly}
x^n + s_1 x^{n-1} + \cdots + s_n = 0
\end{equation}
of $C_s$ and its derivative 
\begin{equation}
\label{eq:derivative}
nx^{n-1} + (n-1) s_1 x^{n-1} +\cdots + s_{n-1} = 0.
\end{equation}
For every $s\in V^* _{GL}$, we denote by $\Res(s)$ this resultant. 
The Sylvester matrix of these polynomials (\ref{eq:poly})
and (\ref{eq:derivative}) show that 
$$
\Res(s) \in H^0(C,K_C ^{\tensor n(n-1)}).
$$
Since the linear system $|K_C|$ is base-point-free, for every choice of 
$p\in C$, the subset 
\begin{equation}
\label{eq:unramified}
U_p= \{s\in V^* _{GL}\;|\; C_s \text{ is non-singular and }
\pi:C_s\rightarrow C\text{ is unramified at } p\}
\end{equation}
 is Zariski open in $V^* _{GL}$.

\begin{thm}
\label{thm:projectiveembedding}
There is a rational map 
$$
\mu:\mathbb{P}(\mathcal{H}_C ^{SL}(n,d))
\longrightarrow Z_n ^{d-n(g-1)}(\mathbb{C})
$$ 
of 
the partial projective moduli space of Higgs bundles
into
the quotient Grassmannian  of
index $d-n(g-1)$.
 This map is generically injective.
At a general point of the image of the 
embedding the $n$-component
KP flows defined on $Z_n(\mathbb{C})$ are tangent to the
Hitchin fibration 
$PH: \mathbb{P}(\mathcal{H}_C ^{SL}(n,d))
\longrightarrow \mathbb{P}_w(V^* _{SL})$.
\end{thm}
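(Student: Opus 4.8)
The plan is to realize $\mu$ as a \emph{matrix Krichever map} applied uniformly across the family (\ref{eq:familyofspectral}) of spectral curves. First I would fix once and for all a point $p\in C$ and a local coordinate $z$ centered at $p$. For $s$ in the Zariski open set $U_p\cap U_\reg$ the fiber $\pi^{-1}(p)\subset C_s$ consists of $n$ distinct points, and the pullback of $z$ trivializes a formal punctured neighborhood of them; expanding in $z$ the sections of the line bundle $\mathcal{L}_E$ of \cite{BNR} that are regular on $C_s\setminus\pi^{-1}(p)$ produces a subspace $W_{(E,\phi)}\subset\mathbb{C}((z))^{\dsum n}$, the $n$ components recording the $n$ sheets over $p$. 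The decisive structural point, following \cite{LM2}, is that since $p$ and $z$ are fixed \emph{downstairs}, every member of the varying family lands in one and the same ambient space $\mathbb{C}((z))^{\dsum n}$; this is what embeds the whole family into a single Grassmannian over $\mathbb{C}$. For the index I would identify $\Ker(\rgam_W)\isom H^0(C_s,\mathcal{L}_E)$ and $\Coker(\rgam_W)\isom H^1(C_s,\mathcal{L}_E)$, so that $\mathrm{index}(\rgam_W)=\rchi(C_s,\mathcal{L}_E)$; since $\deg\mathcal{L}_E=d+n(n-1)(g-1)$ and $g(C_s)=n^2(g-1)+1$, Riemann--Roch yields $\rchi=d-n(g-1)$, exactly the asserted index.

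The residual ambiguities in the geometric data are precisely the ones killed by the quotient targets: the choice of local trivialization of $\mathcal{L}_E$ along $\pi^{-1}(p)$ is absorbed by the $\Gamma_n$-quotient defining $Z_n$, while the scaling $\phi\mapsto\lam\phi$ is absorbed by the $\mathbb{C}^*$-quotient defining $\mathbb{P}(\mathcal{H}_C^{SL}(n,d))$. After checking these two actions are compatible under the construction, $\mu$ is a well-defined rational map on $U_p\cap U_\reg$. Generic injectivity I would derive from the reversibility of the Krichever correspondence \cite{LM2, M1984, M1990, Q}: from $\overline{W}$ one forms the stabilizer $A_W=\{a\in\mathbb{C}((z))^{\dsum n}\,|\,a\cdot W\subseteq W\}$, a commutative ring whose projective spectrum recovers $C_s$ and the covering $\pi$ (the base $C$ being read off the diagonal $\mathbb{C}((z))\subset\mathbb{C}((z))^{\dsum n}$), and $W$, viewed as an $A_W$-module, recovers $\mathcal{L}_E$ and hence $E=\pi_*\mathcal{L}_E$ with its Higgs field. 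Nonsingularity of $C_s$, hence irreducibility, makes this reconstruction unambiguous on a dense open locus.

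The tangency of the KP flows I would deduce from the commutativity of $\mathbb{C}((z))^{\dsum n}$ under componentwise multiplication. Multiplication by any $a$ commutes with multiplication by every element of $A_W$, so the flow $W\mapsto e^{ta}\cdot W$ satisfies $A_{e^{ta}W}=A_W$; the spectral curve is therefore preserved and the flow moves only the line-bundle class, i.e.\ it remains inside the single Hitchin fiber $H^{-1}(s)\isom\Jac(C_s)$. Hence every $n$-component KP flow is tangent to the fibers of $PH$. To match these with the Hitchin flows themselves, I would compare the differential of $\mu$ along a fiber with the identification $H^1(C_s,\mathcal{O}_{C_s})\isom T\Jac(C_s)$ used in (\ref{eq:H1OCs}) and Theorem~\ref{thm:linear}: the KP directions indexed by $a\in\mathbb{C}((z))^{\dsum n}$ modulo $A_W$ span exactly this tangent space, so the restricted KP flows are the linear Jacobian flows of the Hitchin system.

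I expect the main obstacle to lie in the \emph{uniformity in families} rather than in any single fiber. One must show that a single choice of $(p,z)$ produces an honest morphism in families --- that the subspaces $W_{(E,\phi)}$ vary algebraically over $U_p\cap U_\reg$, that the $\Ker/\Coker$ identifications persist in the relative setting, and that the $\Gamma_n$- and $\mathbb{C}^*$-quotients interact coherently --- so that $\mu$ is genuinely rational and not merely defined fiber by fiber. This relative bookkeeping, together with the compatibility of trivialization choices as the sheets $\pi^{-1}(p)$ move with $s$, is where the substantive work is concentrated.
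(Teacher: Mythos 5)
Your proposal follows essentially the same route as the paper: the Krichever map with a fixed $(p,z)$ on the base so the whole family lands in one Grassmannian, the identification of $\Ker(\rgam_W)$ and $\Coker(\rgam_W)$ with $H^0(C_s,\mathcal{L})$ and $H^1(C_s,\mathcal{L})$ giving index $d-n(g-1)$ by Riemann--Roch, the $\Gamma_n$- and $\mathbb{C}^*$-quotients absorbing the trivialization and scaling ambiguities, generic injectivity via the inverse Krichever construction of \cite{LM2}, and tangency of the KP flows to the Hitchin fibers via the stabilizer ring $A_W$. The only substantive difference is that the paper additionally verifies injectivity of the differential $d\mu$ explicitly --- mapping $H^1(C_s,\mathcal{O}_{C_s})$ into $T_WGr_n$ through $A_W$ and $H^1(C_s,K_{C_s}^{-1})$ through the module $D_W$ of vector fields, and invoking the Kodaira--Spencer injectivity of Theorem~\ref{thm:effective} --- a point your outline only gestures at in the final paragraph.
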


\begin{proof}
A point of the partial projective Higgs moduli space
represents an isomorphism class of spectral data
$(\pi:C_s\rightarrow C, \mathcal{L})$,
where $\mathcal{L}$ is a line bundle
 on $C_s$ of degree $d+(n^2-n)(g-1)$.  Choose a
point $p\in C$, a coordinate $z$ of the formal completion $\hat{C}_p$
of $C$ at $p$, 
and $s\in U_p\cap V^* _{SL}$ so that $C_s$ is non-singular
and $\pi$ is unramified over $p$. The formal coordinate $z$ defines 
an identification $\mathcal{O}_{\hat{C}_p} = \mathbb{C}[[z]]$. 
We
also choose a local trivialization of $\mathcal{L}$ around
$\pi^{-1}(p)$, i.e., an isomorphism
\begin{equation}
\label{eq:Ltrivial}
\beta: \mathcal{L}|_{\hat{C}_{s,\pi^{-1}(p)}}
\overset{\sim}{\longrightarrow} \mathcal{O}_{\hat{C}_{s,\pi^{-1}(p)}}
= \mathbb{C}[[z]]^{\dsum n}.
\end{equation}
Since the formal completion $\hat{C}_{s,\pi^{-1}(p)}$ is the disjoint
union of $n$ copies of $\hat{C}_p$, the formal coordinate $z$ also 
defines an identification $\mathcal{O}_{\hat{C}_{s,\pi^{-1}(p)}}
= \mathbb{C}[[z]]^{\dsum n}$. 

Now define
$$
W = \beta(H^0(C_s\setminus \pi^{-1}(p),\mathcal{L}))
\subset \mathbb{C}((z))^{\dsum n},
$$
which is the set of meromorphic sections of $\mathcal{L}$ that are 
holomorphic on $C_s\setminus \pi^{-1}(p)$ and have finite poles
at $\pi^{-1}(p)$. Since we have
$$
\begin{cases}
\Ker(\rgam_W) \isom H^0(C_s,\mathcal{L})\\
\Coker(\rgam_W) \isom H^1(C_s,\mathcal{L}),
\end{cases}
$$
$W$ is a point of the Grassmannian $Gr_n ^{d-n(g-1)}$
of index $d-n(g-1)$. The different choice of the local trivialization
$\beta'$ in (\ref{eq:Ltrivial}) leads to an element 
$\beta'\circ\beta^{-1}\in \Gamma_n$. Therefore, the point 
$\overline{W}\in Z_n ^{d-n(g-1)}(\mathbb{C})$ is uniquely 
determined by $(\pi:C_s\rightarrow C, p,z, \mathcal{L})$. 
Conversely this set of geometric data is uniquely determined
by $\overline{W}$ (see Section~5 of \cite{LM2}). Thus
the rational map $\mu$ is generically one-to-one. 
Notice that the $\Gamma_n$ action on the 
Grassmannian is inessential from the geometric point of view
because it simply changes the local trivialization of (\ref{eq:Ltrivial}).

The tangent space at a general point of 
$\mathbb{P}(\mathcal{H}_C ^{SL}(n,d))$
is $$H^1(C_s,\mathcal{O}_{C_s}) \dsum V^* _{SL}/\mathbb{C}.
$$
Since the Kodaira-Spencer map (\ref{eq:injectiveKS}) is injective,
it suffices to show injectivity of  the natural map
\begin{multline*}
d\mu : H^1(C_s,\mathcal{O}_{C_s}) \dsum H^1(C_s,K_{C_s} ^{-1})
\longrightarrow \Hom(W,\mathbb{C}((z))^{\dsum n}/W)\\
=T_W Gr_n(\mathbb{C}),
\end{multline*}
which is 
induced by the local trivialization of $\mathcal{O}_{C_s}$ and 
$K_C$ coming from the choice of the local coordinate $z$. 
Using $z$, we define
$$
A_W = 
H^0(C_s\setminus \pi^{-1}(p),\mathcal{O}_{C_s})
\subset \mathbb{C}((z))^{\dsum n},
$$
which is a point of the Grassmannian satisfying
$$
\begin{cases}
\Ker(\rgam_{A_W}) \isom H^0(C_s,\mathcal{O}_{C_s})\\
\Coker(\rgam_{A_W}) \isom H^1(C_s,\mathcal{O}_{C_s}).
\end{cases}
$$
We note that $A_W$ is a ring and $W$ is an $A_W$-module. 
Since 
$$
\Coker(\rgam_{A_W}) = \frac{\mathbb{C}((z))^{\dsum n}}
{A_W + \mathbb{C}[[z]]^{\dsum n}},
$$
$H^1(C_s,\mathcal{O}_{C_s})$ is injectively mapped to 
$\Hom(W,\mathbb{C}((z))^{\dsum n}/W)$.

The  local coordinate $z$ determines 
a local trivialization of $K_C$
$$
 K_C |_{\hat{C}_p} \overset{\sim}{\longrightarrow} \mathcal{O}_{\hat{C}_p}\cdot dz
 \isom \mathcal{O}_{\hat{C}_p} ,
$$
and hence that of $K_{C_s} ^{-1}$
$$
K_{C_s} ^{-1}\big|_{\hat{C}_{s,\pi^{-1}(p)}}
\overset{\sim}{\longrightarrow} \left(\mathcal{O}_{\hat{C}_p}\right)
 ^{\dsum n}
\cdot \frac{\partial}{\partial z} \isom
\mathbb{C}[[z]]^{\dsum n}
\cdot \frac{\partial}{\partial z}.
$$
This trivialization gives
$$
H^1(C_s,K_{C_s} ^{-1}) \isom
\frac{\mathbb{C}((z)) ^{\dsum n} \cdot \partial/\partial z}
{D_W +\mathbb{C}[[z]] ^{\dsum n} \cdot \partial/\partial z},
$$
where 
$$
D_W = H^0(C_s\setminus \pi^{-1}(p), K_{C_s} ^{-1})
\subset \mathbb{C}((z)) ^{\dsum n} \cdot \partial/\partial z.
$$
Note that $s\in V^* _{GL}$ determines an 
element of $H^0(C_s,K_{C_s})$ because
$$
s\in V_{GL} ^* = \bigoplus_{i=1} ^n H^0(C,K_C ^{\tensor i})
\isom H^0(C,\pi_* \pi^* K_C ^{\tensor n})
\isom H^0(C_s,K_{C_s}).
$$
This holomorphic $1$-form on $C_s$ induces a homomorphism
$$\mathcal{L}\rightarrow \mathcal{L}\tensor K_{C_s},
$$
or equivalently, 
$$
K_{C_s} ^{-1} \tensor \mathcal{L} \longrightarrow \mathcal{L}.
$$
In terms of the local coordinate $z$, this homomorphism gives an 
action of $D_W$ on $W$.
Since $D_W\cdot W\subset W$, 
we conclude that $H^1(C_s,K_{C_s} ^{-1}) $ is injectively 
mapped to $T_W Gr_n(\mathbb{C})$. In this construction
$H^1(C_s,\mathcal{O}_{C_s})$ and $H^1(C_s,K_{C_s} ^{-1})$
have no common element in $T_W  Gr_n(\mathbb{C})$
except for $0$.
This establishes the injectivity of $d\mu$.

In \cite{LM2}, it is proved that the orbit of the $n$-component 
KP flows starting from $\overline{W}$ is isomorphic
to $\Pic^{d+(n^2-n)(g-1)}(C_s)$ (Theorem~5.8, \cite{LM2}),
which is the fiber of the Hitchin map $PH$. This completes the
proof of the theorem.
\end{proof}

The above theorem does not say anything about the
Hitchin integrable system, because the partial projective
moduli space is not a symplectic manifold and we do 
not have any integrable systems on it. To directly compare 
the Jacobian flows of the Hitchin systems and the KP flows,
we use the \emph{relative} Grassmannian of \cite{Q}
defined on the scheme $U_p$ of (\ref{eq:unramified}).
So let $\pi^{-1}(p) = \{p_1,\dots,p_n\}$, and denote by $z_i = \pi^*(z)$
the formal coordinate of $\hat{C}_{s,p_i}$. 
Choose a linear coordinate system $(h_1,\dots, h_N)$ 
for $V^* _{GL}$ as in (\ref{eq:hjs}). 
Recall that 
$\pi_*\mathcal{O}_{C_s}\isom
\oplus_{i=0} ^{n-1} K_C ^{\tensor -i}$, and from  (\ref{eq:H1OCs})
we have
$$
H^1(C_s,\mathcal{O}_{C_s})\isom
\bigoplus_{i=0} ^{n-1} H^1(C,K_C ^{\tensor -i}) = V_{GL}.
$$
Using the $\check{\text{C}}$ech cohomology computation 
based on the covering $C = (C\setminus \{p\})\cup \hat{C}_p$,
we expand each $h_k\in V_{GL}$ as
\begin{equation}
\label{eq:hkinz}
h_k = \sum_{i=1} ^n \sum_{j\ge 1}
t_{ijk} z_i ^{-j}, \qquad t_{ijk}\in \mathbb{C}.
\end{equation}
The $n$-component KP flows
on the 
Grassmannian $Gr_n (U_p)$  is defined 
by a formal action of 
\begin{equation}
\label{eq:nKP}
\exp\left(\sum_{i=1} ^n \sum_{j\ge 1}\sum_{k=1} ^N
t_{ijk} z_i ^{-j}\right).
\end{equation}
For degree $d=n(g-1)$, we have the following:

\begin{thm}
\label{thm:embedding1}
Let us choose a point $p\in C$, a formal coordinate
$z$ of $\hat{C}_p$, and a square root of the canonical sheaf
$K_C ^{1/2}$. Then the fibration 
$$
\xymatrix{
		\mathfrak{C}_{V^* _{GL}}(n,n(g-1))\;\ar[dr]_{F}
		 \ar[rr]^{\quad\pi}
		&& 	\;\; C\times V^* _{GL} \ar[dl]^{p_2}\\
		& V^* _{GL} &
		}
$$
determines a point $W\in Gr_n ^0(U_p)$.
There is a birational map  from $\mathcal{H}_C(n,n(g-1))$ to
the orbit of the $n$-component KP flows on the quotient
Grassmannian $Z_n ^0(U_p)$ starting from $\overline{W}$.
The Hitchin integrable system on the Higgs moduli space
$\mathcal{H}_C(n,n(g-1))$ is the pull-back of the $n$-component
KP equations via this birational map.
\end{thm}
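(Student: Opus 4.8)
The plan is to carry out a relative Krichever construction over the base $U_p$ of (\ref{eq:unramified}), upgrading the pointwise embedding of Theorem~\ref{thm:projectiveembedding} to a map of the entire $GL$-Higgs moduli space into Quandt's relative Grassmannian, and then to match the two integrable structures using the linearity result of Theorem~\ref{thm:linear}. First I would fix the base point $W$. The choice $d=n(g-1)$ is made precisely so that the line bundle $\mathcal{L}_E$ attached to a Higgs pair has degree $d+n(n-1)(g-1)=n^2(g-1)=g(C_s)-1$, the degree at which a generic line bundle on $C_s$ has $H^0=H^1=0$ and the associated point lands in the big cell (Remark~\ref{rem:bigcell}); this is also why the relevant index is $d-n(g-1)=0$. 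Using the chosen square root I set $\mathcal{L}_0=\pi^*(K_C^{1/2})^{\tensor n}$ on each $C_s$. Since $K_{C_s}\isom \pi^*K_C^{\tensor n}$, the bundle $\mathcal{L}_0$ squares to $K_{C_s}$ and so is a theta characteristic of degree $g(C_s)-1$, furnishing a canonical base point on every fiber. The relative version of the construction around $\pi^{-1}(p)$ in (\ref{eq:Ltrivial}), performed over $U_p$, produces a quasi-coherent submodule $W\subset \mathcal{O}_{U_p}((z))^{\dsum n}$, and I would verify the relative Fredholm condition of Definition~\ref{def:Sato} by base change from the cohomology of $\mathcal{L}_0$ along $F$, giving $W\in Gr_n^0(U_p)$.

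Next I would produce the birational map. Away from the singular locus, a point of $\mathcal{H}_C(n,n(g-1))$ is the spectral datum $(\pi:C_s\to C,\mathcal{L})$ with $\deg\mathcal{L}=g(C_s)-1$, and the orbit of the $n$-component KP flows (\ref{eq:nKP}) through $\overline{W}$ is, fiberwise over $U_p$, isomorphic to $\Pic^{d+(n^2-n)(g-1)}(C_s)\isom \Jac(C_s)$ by Theorem~5.8 of \cite{LM2}; by (\ref{eq:Hitchinfiber}) this is exactly the Hitchin fiber $H^{-1}(s)$. Sending $(E,\phi)$ to the point of this orbit representing $\mathcal{L}_E$ defines the map. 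Generic injectivity follows as in Theorem~\ref{thm:projectiveembedding} from the reconstruction of $(C_s,p,z,\mathcal{L})$ out of $\overline{W}$, the only ambiguity being the inessential $\Gamma_n$-action that rescales the trivialization (\ref{eq:Ltrivial}).

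The substance of the theorem is the identification of the two integrable systems, and here I would argue as follows. By Theorem~\ref{thm:linear} the Hamiltonian vector field of the coordinate $h_k$ is the constant Jacobian flow in the direction $h_k\in V_{GL}\isom H^1(C_s,\mathcal{O}_{C_s})$ of (\ref{eq:H1OCs}). On the Grassmannian side, the \v{C}ech expansion (\ref{eq:hkinz}) realizes $h_k$ as the class of $\sum_{i,j}t_{ijk}z_i^{-j}$ in $\mathbb{C}((z))^{\dsum n}/(A_W+\mathbb{C}[[z]]^{\dsum n})$, which is canonically $H^1(C_s,\mathcal{O}_{C_s})$, and the corresponding KP flow $KP(h_k)$ acts by multiplication by this principal part. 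The content of the matching is that, under the Krichever dictionary, multiplication by $\sum_j t_{ijk}z_i^{-j}$ is, via Abel's theorem, exactly the Abel--Jacobi linearization of the translation on $\Jac(C_s)$ in the direction $h_k$. Thus $KP(h_k)$ and the Hitchin Hamiltonian field of $h_k$ coincide on every smooth fiber, and since the $h_k$ exhaust the coordinates of $H$ the whole Hitchin system is the pull-back of the $n$-component KP equations.

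I expect the last paragraph to be the main obstacle. The delicate points are that the \v{C}ech-to-Grassmannian identification of $H^1(C_s,\mathcal{O}_{C_s})$ agrees with the tangent-space identification of (\ref{eq:tangent2}) used to linearize the Hitchin flows, and that the symplectic pairing recorded in Darboux form (\ref{eq:Darboux})---the Serre-duality pairing of $V_{GL}$ with $V^*_{GL}\isom H^0(C_s,K_{C_s})$---is the one reproduced by the KP construction. Establishing these compatibilities in the relative setting, coherently over $U_p$ and through Abel's theorem, is the technical crux; once they are in place, the equality of the Hamiltonian flows is a formal consequence of Theorem~\ref{thm:linear} together with Theorem~5.8 of \cite{LM2}.
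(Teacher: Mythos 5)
Your proposal follows essentially the same route as the paper: the same base point $\pi^*(K_C^{n/2})$ (the relative theta characteristic, giving index $0$), the same relative Krichever construction over $U_p$, and the same identification of the KP flows with the linear Jacobian flows of Theorem~\ref{thm:linear} via the \v{C}ech expansion (\ref{eq:hkinz}) of the coordinates $h_k$ in $H^1(C_s,\mathcal{O}_{C_s})$. The compatibilities you flag as the ``technical crux'' are exactly the ones the paper treats as built into the construction, since the identification (\ref{eq:H1OCs}) used to linearize the Hitchin flows is the same \v{C}ech identification underlying (\ref{eq:hkinz}).
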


\begin{proof}
We regard $C, p, z$, and $K_C ^{1/2}$ being defined over the trivial family
$C\times V^* _{GL}$.  There is a natural choice of a 
local trivialization of $K_C ^{1/2}$ on $\hat{C}_p$ determined by the
coordinate $z$. Note that  
\begin{equation}
\label{eq:sqrtcanonicalonspec}
\mathcal{K}^{1/2} \overset{\text{def}}{=} \pi^*(K_C ^{n/2})
\end{equation}
is a square root of the relative dualizing sheaf $\mathcal{K}
= \pi^*(K_C ^{n})$ on 
$$\mathfrak{C}_{V^* _{GL}}(n,n(g-1)).
$$
Take an arbitrary point $s\in U_p$.  Since $p\in C$ is a point
of $C$ at which $\pi$ is unramified, the formal coordinate $z$ determines  a local trivialization of $\mathcal{K} ^{1/2}$
along $\pi^{-1}(p)\subset \mathfrak{C}_{V^* _{GL}}(n,n(g-1))$.
Define
\begin{equation}
\label{eq:Ws}
W = H^0(\mathfrak{C}_{V^* _{GL}}(n,n(g-1))\setminus \pi^{-1}(p), \mathcal{K}^{1/2}) \subset
\mathcal{O}_{U_p}((z)) ^{\dsum n},
\end{equation}
using this local trivialization.
Since
$$
\begin{cases}
\Ker(\rgam_{W_s}) \isom H^0(C_s,K_{C_s} ^{1/2})\\
\Coker(\rgam_{W_s}) \isom H^1(C_s,K_{C_s} ^{1/2})
\end{cases}
$$
for each $s\in U_p$, 
we have $W\in Gr_n ^0(U_p)$. 
Let us analyze the action of (\ref{eq:nKP}) at $W$. 
On each $C_s$, $h_k = \sum t_{ijk}  z_i ^{-j}$ defines an
element of 
$H^1(C_s,\mathcal{O}_{C_s})$.   The exponential of
(\ref{eq:nKP})
is the Lie integration map
\begin{equation}
\label{eq:exptojacCs}
V_{GL}\isom H^1(C_s,\mathcal{O}_{C_s})
\overset{\exp}{\longrightarrow}
H^1(C_s,\mathcal{O}_{C_s})\big/H^1(C_s,\mathbb{Z})
=\Jac(C_s).
\end{equation}
Therefore, the action of (\ref{eq:nKP}) at the point  $W$ produces
simultaneous deformations of the line bundle
\begin{equation}
\label{eq:nKPevolution}
\mathcal{K}^{1/2}\longmapsto
\mathcal{K}^{1/2}\tensor \exp\left(\sum_{i=1} ^n \sum_{j\ge 1}\sum_{k=1} ^N
t_{ijk} z_i ^{-j}\right)
\end{equation}
on $\mathfrak{C}_{V^* _{GL}}(n,n(g-1))$. 
Consequently, the orbit of the $n$-component KP flows
on $Z_n ^0(U_p)$ starting at $\overline{W}$ is 
naturally identified with the family
of $\Pic^{n^2(g-1)}(C_s)$ on $U_p$. 
Since the Hitchin fibration
$H: \mathcal{H}_C(n,n(g-1))\rightarrow V^* _{GL}$ is
also a family of $\Pic^{n^2(g-1)}(C_s)$ on $V^* _{GL}$,
we have a rational map from $H: \mathcal{H}_C(n,n(g-1))$ to 
the $n$-component KP orbit of $\overline{W}$
in $Z_n ^0(U_p)$. 

Recall that the Hitchin integrable system on 
$H: \mathcal{H}_C(n,n(g-1))$ is the linear Jacobian
flows defined by the coordinate functions $(h_1,\dots,h_N)$.
We note that this is exactly what (\ref{eq:nKPevolution})
gives. 
\end{proof}

\begin{rem}
So far we have assumed  that
$C_s$ is non-singular. Although it will not be an Abelian
variety, we can still define the Jacobian
$\Jac(C_s)$ using (\ref{eq:exptojacCs}) when the spectral 
curve $C_s$ is singular. As long as we choose $p\in C$ so that
$\pi^{-1}(p)$ avoids the singular locus of $C_s$, 
the KP flows are well defined. Moreover, the theory of
\emph{Heisenberg
KP flows} introduced in \cite{AdBe, LM2} allows us to consider
the covering $\pi:C_s\rightarrow C$ right at a ramification point. 
It is more desirable to define the Grassmmanian over the whole
$V^* _{GL}$ and to deal with the entire family 
$F:\mathfrak{C}_{V^* _{GL}}\rightarrow V^* _{GL}
$ together with the moduli stack of
Higgs bundles instead of the stable moduli we have considered here,
since the framework of the Sato Grassmannian allows us to 
consider all vector bundles on $C$ and degenerated spectral curves.
We refer to \cite{DM} for a study in this direction.
\end{rem}

\section{Serre duality and formal adjoint of pseudo-differential
operators}
\label{sect:Serre}

In this section we analyze the Serre duality of Higgs bundles
in terms of the language of the Sato Grassmannian, and
identify the involution on the corresponding pseudo-differential
operators.

The Krichever construction (\cite{M1984, M1990, SW}) assigns a 
point $W$ of the Sato Grassmannian,
\begin{equation}
\label{eq:Krichever1}
Gr_n(\mathbb{C})\owns W=\beta(H^0(C\setminus\{p\},F))\subset
\mathbb{C}((z))^{\dsum n},
\end{equation}
to
a set of geometric data $(C,p,z,F,\beta)$, where $C$ is an 
irreducible algebraic curve, $p\in C$ is a non-singular point,
$z$ is a formal parameter of the completion $\hat{C}_p$,
$F$ is a torsion-free sheaf of rank $n$ on $C$, and 
$$
\beta: F|_{\hat{C}_p} \overset{\sim}{\longrightarrow} \mathcal{O}_{\hat{C}_p} ^{\dsum n}
= \mathbb{C}[[z]]^{\dsum n}
$$
is a local trivialization of $F$ around $p$.  We continue to 
assume that $C$ is non-singular, hence $F$ is locally-free on
$C$. 
As noted in Section~\ref{sect:KP}, 
the choice of a local coordinate $z$ automatically determines 
a local trivialization of $K_C$:
\begin{equation}
\label{eq:KCtrivial}
\alpha: K_C |_{\hat{C}_p} \overset{\sim}{\longrightarrow} \mathcal{O}_{\hat{C}_p}\cdot dz
 \isom \mathcal{O}_{\hat{C}_p}.
 \end{equation}
For every homomorphism $\xi: F\rightarrow K_C$, we 
have an element  
\begin{equation*}
\beta^*(\xi) \overset{\text{def}}{=}\alpha\circ \xi\circ \beta^{-1}\in \left(\mathcal{O}_{\hat{C}_p} ^{\dsum n}\right)^*
= \mathcal{O}_{\hat{C}_p} ^{\dsum n}
\end{equation*}
that is determined by the commutative diagram
\begin{equation}
\label{eq:beta*}
\begin{CD}
F|_{\hat{C}_p} @>{\xi}>> K_C|_{\hat{C}_p}\\
@V{\beta}V{\wr}V @V{\wr}V{\alpha}V\\
\mathcal{O}_{\hat{C}_p} ^{\dsum n} @>{\beta^*(\xi)}>> \mathcal{O}_{\hat{C}_p}.
\end{CD}
\end{equation} 
Thus we have the canonical choice of the local trivialization
$$
\beta^*: F^*\tensor K_C|_{\hat{C}_p}
\overset{\sim}{\longrightarrow}\mathcal{O}_{\hat{C}_p}
^{\dsum n}.
$$

\begin{Def}[Serre Dual] The \emph{Serre dual} of the 
geometric date $(C,p,z,F,\beta)$ is the set of data
$(C,p,z,F^*\tensor K_C, \beta^*)$.
\end{Def}

To identify the counterpart of the Serre duality on the Sato
Grassmannian, we introduce a non-degenerate symmetric paring in
$\mathbb{C}((z))$ by
\begin{equation}
\label{eq:pairing}
\langle a(z), b(z) \rangle = \frac{1}{2\pi i}
\oint a(z) b(z) dz
= \text{ the coefficient of $z^{-1}$ in } a(z) b(z),
\end{equation}
and define
\begin{equation}
\label{eq:pairingn}
\langle f(z), g(z) \rangle_n  = \sum_{i=1} ^n \langle f_i(z),
g_i(z)\rangle ,
\qquad f(z), g(z)\in \mathbb{C}((z))^{\dsum n}.
\end{equation}

\begin{lem}
\label{lem:WandWbot}
Let $W\in Gr_n(\mathbb{C})$ be a point of the Sato Grassmannian
of index $d$. Then 
$$
W^\bot = \{f\in \mathbb{C}((z))^{\dsum n} \;|\; \langle f,w \rangle_n = 0
\text{ for all } w\in W\}
$$
is a point of $Gr_n(\mathbb{C})$ of index $-d$. Moreover, 
we have
\begin{equation}
\label{eq:Ker=Coker*}
\begin{cases}
\Ker(\rgam_W)^* \isom \Coker(\rgam_{W^\bot})\\
\Coker(\rgam_W)^* \isom \Ker(\rgam_{W^\bot}),
\end{cases}
\end{equation}
where $\rgam_W$ is defined by the exact sequence
$$
0\longrightarrow \Ker(\rgam_W) \longrightarrow
W 
\overset{\rgam_W}{\longrightarrow} \mathbb{C}((z))^{\dsum n}/\mathbb{C}[[z]]^{\dsum n}
\longrightarrow \Coker(\rgam_W) \longrightarrow 0.
$$
\end{lem}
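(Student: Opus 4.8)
The plan is to exploit the fact that the residue pairing (\ref{eq:pairingn}) is a non-degenerate continuous pairing for which the standard lattice $\mathbb{C}[[z]]^{\dsum n}$ is its own annihilator. Write $H=\mathbb{C}((z))^{\dsum n}$ and $H_+=\mathbb{C}[[z]]^{\dsum n}$, so that by definition
\begin{equation*}
\Ker(\rgam_W)=W\cap H_+,\qquad \Coker(\rgam_W)=H/(W+H_+),
\end{equation*}
and likewise with $W$ replaced by $W^\bot$. The entire statement will follow once these four spaces are matched across the pairing. First I would record the linear-algebraic input. The residue computation $\langle z^a,z^b\rangle=\delta_{a+b,-1}$ shows that the pairing is non-degenerate, that the lattice is isotropic, and in fact that $H_+^\bot=H_+$; moreover the induced pairing $H/H_+\times H_+\to\mathbb{C}$ is perfect, identifying $H/H_+$ with the restricted dual of $H_+$ and vice versa. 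Because $W$ is Fredholm, $W\cap H_+$ is finite-dimensional (hence closed) and $W+H_+$ is closed, since it contains $H_+$ and $H/H_+$ is discrete. This commensurability with $H_+$ is exactly what makes the annihilator calculus behave as in finite dimensions, in particular $W^{\bot\bot}=W$ and $(W\cap H_+)^\bot=W^\bot+H_+$.

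Next I would carry out the two identifications. Since $H_+^\bot=H_+$, an element of $H$ lies in $W^\bot\cap H_+$ exactly when it annihilates $W+H_+$; thus $W^\bot\cap H_+=(W+H_+)^\bot$, and pairing against $H/(W+H_+)$ sends $f\mapsto\langle f,\cdot\rangle$ into $\Coker(\rgam_W)^*$. Non-degeneracy makes this map injective, and perfectness (every functional on the finite-dimensional quotient $\Coker(\rgam_W)$ is represented by some $f\in H$) makes it surjective, giving $\Ker(\rgam_{W^\bot})\isom\Coker(\rgam_W)^*$. Dually, restricting the pairing to the finite-dimensional space $W\cap H_+$ produces a surjection $H\to(W\cap H_+)^*$ whose kernel is $(W\cap H_+)^\bot=W^\bot+H_+$, whence
\begin{equation*}
\Coker(\rgam_{W^\bot})=H/(W^\bot+H_+)\isom(W\cap H_+)^*=\Ker(\rgam_W)^*.
\end{equation*}
These two isomorphisms are precisely (\ref{eq:Ker=Coker*}).

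Finiteness and the index are then immediate. By the Fredholm condition on $W$, both $\Ker(\rgam_W)$ and $\Coker(\rgam_W)$ are finite-dimensional, so their duals are as well, and the displayed isomorphisms show that $\Ker(\rgam_{W^\bot})$ and $\Coker(\rgam_{W^\bot})$ are finite-dimensional; hence $W^\bot$ again satisfies the Fredholm condition and is a point of $Gr_n(\mathbb{C})$. Counting dimensions,
\begin{equation*}
\index(\rgam_{W^\bot})=\dim\Coker(\rgam_W)-\dim\Ker(\rgam_W)=-\index(\rgam_W)=-d.
\end{equation*}

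I expect the only genuine obstacle to be the infinite-dimensional bookkeeping behind the annihilator identities used above: the closedness of $W+H_+$, the representability of continuous functionals by elements of $H$, and the double-annihilator equality $W^{\bot\bot}=W$ together with $(W\cap H_+)^\bot=W^\bot+H_+$. All of these rest on the commensurability of $W$ with $H_+$ guaranteed by the Fredholm condition, combined with the topological perfectness of the residue pairing. Thus the argument ultimately reduces to invoking the standard duality between linearly compact and discrete $\mathbb{C}$-vector spaces, and no new computation beyond the residue formula $\langle z^a,z^b\rangle=\delta_{a+b,-1}$ should be required.
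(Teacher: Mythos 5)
Your argument is correct, and it rests on the same mechanism as the paper's proof --- duality via the residue pairing together with the fact that $H_+=\mathbb{C}[[z]]^{\dsum n}$ is its own annihilator --- but the execution is genuinely different in two places. Where you obtain surjectivity of $\Ker(\rgam_{W^\bot})\to\Coker(\rgam_W)^*$ by appealing to the duality between the discrete space $H/H_+$ and the linearly compact space $H_+$, the paper proves exactly this representability by hand: it chooses elements $g^i_j$ with prescribed leading terms $\mathbf{e}^iz^{-j}$ (taken inside $W$ whenever possible) and solves the resulting triangular system recursively for the coefficients of the representing series $f\in H_+$; your packaging makes the existence and uniqueness of that solution transparent without the computation. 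For the second isomorphism the paper merely says ``apply the same argument,'' whereas you route it through the annihilator identity $(W\cap H_+)^\bot=W^\bot+H_+$, which you correctly single out as the one step still owed. It does hold, by the very duality you invoke: given $g\in(W\cap H_+)^\bot$, the functional $\langle g,\cdot\rangle|_W$ kills $W\cap H_+$ and so descends to $(W+H_+)/H_+\subset H/H_+$; extend it linearly to all of $H/H_+$ and represent the extension by some $h\in H_+$ (possible because $H_+$ is the full dual of the discrete space $H/H_+$), so that $g-h\in W^\bot$ and $g\in W^\bot+H_+$. One small correction to your closing remark: neither this identity nor $H_+\isom(H/H_+)^*$ uses the Fredholm condition; commensurability of $W$ with $H_+$ enters only to make the four spaces finite-dimensional, which is what you need for the reflexivity step in $\Coker(\rgam_{W^\bot})\isom\Ker(\rgam_W)^*$ and for the index count. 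With that lemma supplied, your proof is complete.
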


\begin{proof}
Take an element $f\in W^\bot \cap \mathbb{C}[[z]]^{\dsum n}$. Then
$$\langle f, W+\mathbb{C}[[z]]^{\dsum n}\rangle_n = 0. 
$$
Thus it induces a
linear map
$$
f: \mathbb{C}((z))\big/ (W+ \mathbb{C}[[z]]^{\dsum n})
\owns g
\longmapsto \langle f,g \rangle_n\in 
\mathbb{C},
$$
defining a natural inclusion 
\begin{equation}
\label{eq:KerinCoker}
\Ker(\rgam_{W^\bot})\subset
\Coker(\rgam_W)^*. 
\end{equation}
Now let $h^*\in \Coker(\rgam_W)^*$
be an arbitrary element.
Choose a sequence 
$$\{g_1 ^i,g_2 ^i, g_3 ^i\dots\}_{i=1,\dots,n}$$
of elements of $\mathbb{C}((z))^{\dsum n}$ such that
\begin{enumerate}
\item $g_j ^i = \mathbf{e}^i z^{-j} + \sum_{\ell = 1} ^n \sum_{k\ge 1}
 {g}_{\ell jk} \mathbf{e}^\ell z^{-j+k}$;
\item if $W$ has an element whose leading term is
$\mathbf{e}^i z^{-j}$ for $j>0$,
then $g_j ^i\in W$.
\end{enumerate}
Here $\mathbf{e}^i = (0,\dots,0,1,0,\dots,0)^T$ is the 
$i$-th standard basis vector for $\mathbb{C}^n$. 
We denote by  $\bar{g}_j ^i$ the projection image of $g_j ^i$
in $\mathbb{C}((z))^{\dsum n}\big/ (W+ \mathbb{C}[[z]]^{\dsum n})$.
Consider the set of equations
\begin{equation}
\label{eq:eqforf}
\langle f, g_j ^i\rangle_n = h^*(\bar{g}_j ^i),\qquad i=1,\dots, n; \quad j=1,2,3,\dots
\end{equation}
for $f\in \mathbb{C}[[z]]^{\dsum n}$. If we write $f(z) = \sum_{i=1} ^n
\sum_{j\ge 0}
 a_{ij}  \mathbf{e}^i z^j$, then (\ref{eq:eqforf}) is equivalent to 
 $$
 \begin{aligned}
 a_{i0} &= h^*(g_1 ^i)\\
 a_{i1} &= h^*(g_2 ^i) -\sum_{\ell = 1} ^n a_{\ell 0}  g_{\ell 21}\\
 a_{i2} &=h^*(g_3 ^i)-\left(\sum_{\ell = 1} ^n 
 a_{\ell 1} g_{\ell 31} + a_{\ell 0} g_{\ell 32} \right)\\
 a_{i3}  &=h^*(g_4 ^i)-\left(\sum_{\ell = 1} ^n
 a_{\ell 2} g_{\ell 41} + a_{\ell 1} g_{\ell 42} + a_{\ell 0} g_{\ell 43}
 \right)\\
 &\vdots
\end{aligned}
$$
It is obvious that (\ref{eq:eqforf}) has a unique solution.
By construction we have $\langle f, W\rangle_n = 0$, hence
$
f\in \Ker(\rgam_{W^\bot})
$.
Therefore, the inclusion
(\ref{eq:KerinCoker}) is indeed a surjective map.

The application of the same argument to $W\cap
\mathbb{C}[[z]]^{\dsum n}$  establishes that
$$\Ker(\rgam_W)\isom \Coker(\rgam_{W^\bot})^*.
$$
\end{proof}

\begin{rem}
We note that $W^{\bot\bot} = W$. It is obvious that
$W\subset W^{\bot\bot}$. The relation (\ref{eq:Ker=Coker*})
then makes the inclusion relation actually the equality.
\end{rem}

\begin{thm}
\label{thm:Serre=bot}
Let $W\in Gr_n(\mathbb{C})$ be the point of the Grassmannian
corresponding to a set of geometric data
$(C,p,z,F,\beta)$. Then the point corresponding
to its Serre dual $(C,p,z,F^*\tensor K_C,\beta^*)$ is $W^\bot$.
\end{thm}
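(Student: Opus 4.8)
The plan is to prove the theorem by establishing the two inclusions $W^* \subseteq W^\bot$ and $W^\bot \subseteq W^*$, where
$$
W^* = \beta^*\big(H^0(C\setminus\{p\}, F^*\tensor K_C)\big) \subset \mathbb{C}((z))^{\dsum n}
$$
denotes the point of $Gr_n(\mathbb{C})$ attached by the Krichever construction (\ref{eq:Krichever1}) to the Serre dual data $(C,p,z,F^*\tensor K_C,\beta^*)$. Since $F^*\tensor K_C$ is again a locally free sheaf of rank $n$, this $W^*$ is a genuine point of $Gr_n(\mathbb{C})$, so the entire content of the theorem is the identification $W^* = W^\bot$.

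The inclusion $W^*\subseteq W^\bot$ is the conceptual heart of the argument, and it is exactly where the residue theorem enters. First I would observe that the abstract pairing (\ref{eq:pairingn}) on $\mathbb{C}((z))^{\dsum n}$, read through the trivializations $\beta$ and $\alpha$, computes the residue of an honest meromorphic one-form on $C$. Indeed, for $f\in H^0(C\setminus\{p\}, F)$ and $\xi\in H^0(C\setminus\{p\}, F^*\tensor K_C)$, chasing the commutative diagram (\ref{eq:beta*}) gives $\beta^*(\xi)(\beta(f)) = \alpha(\xi(f))$, so that near $p$ one has $\xi(f) = \big(\sum_{i=1}^n \beta^*(\xi)_i\,\beta(f)_i\big)\,dz$. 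By the definition of the pairing this yields
$$
\langle \beta^*(\xi), \beta(f)\rangle_n = \res_p\big(\xi(f)\big).
$$
Now $\xi(f)$ is a meromorphic one-form on the compact curve $C$ that is holomorphic on $C\setminus\{p\}$; its only pole sits at $p$, so the residue theorem forces $\res_p(\xi(f)) = 0$. Hence $\langle \beta^*(\xi),\beta(f)\rangle_n = 0$ for all such $f$ and $\xi$, which is precisely $W^*\subseteq W^\bot$.

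To obtain the reverse inclusion I would first check that $W^*$ and $W^\bot$ carry the same index. On one hand the Krichever dictionary gives $\Ker(\rgam_{W^*})\isom H^0(C,F^*\tensor K_C)$ and $\Coker(\rgam_{W^*})\isom H^1(C,F^*\tensor K_C)$; Serre duality on $C$ rewrites these as $H^1(C,F)^*\isom\Coker(\rgam_W)^*$ and $H^0(C,F)^*\isom\Ker(\rgam_W)^*$. On the other hand Lemma~\ref{lem:WandWbot} gives $\Ker(\rgam_{W^\bot})\isom\Coker(\rgam_W)^*$ and $\Coker(\rgam_{W^\bot})\isom\Ker(\rgam_W)^*$. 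Thus $W^*$ and $W^\bot$ have isomorphic kernels and cokernels, and in particular equal index $-d$. Now the inclusion $W^*\subseteq W^\bot$ gives $\Ker(\rgam_{W^*})\subseteq\Ker(\rgam_{W^\bot})$ and $\operatorname{im}(\rgam_{W^*})\subseteq\operatorname{im}(\rgam_{W^\bot})$, whence $\dim\Ker(\rgam_{W^*})\le\dim\Ker(\rgam_{W^\bot})$ and $\dim\Coker(\rgam_{W^*})\ge\dim\Coker(\rgam_{W^\bot})$. Since the two indices agree, both inequalities must be equalities, so $\Ker(\rgam_{W^*})=\Ker(\rgam_{W^\bot})$ and $\operatorname{im}(\rgam_{W^*})=\operatorname{im}(\rgam_{W^\bot})$. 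A short diagram chase then shows $W^\bot\subseteq W^*$: given $w\in W^\bot$, pick $w'\in W^*$ with $\rgam_{W^*}(w')=\rgam_{W^\bot}(w)$, so that $w-w'\in\Ker(\rgam_{W^\bot})=\Ker(\rgam_{W^*})\subseteq W^*$, and hence $w\in W^*$.

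I expect the residue identification of the second paragraph to be the main obstacle: the only real work is making the bookkeeping of the two trivializations $\beta$ and $\alpha$ precise enough that the abstract pairing $\langle\cdot,\cdot\rangle_n$ is literally $\res_p(\xi(f))$. Once this is in hand, the index comparison via Serre duality together with Lemma~\ref{lem:WandWbot}, and the Fredholm-dimension upgrade of an inclusion to an equality, are both routine.
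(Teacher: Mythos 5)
Your proposal is correct and follows essentially the same route as the paper: the inclusion $W^*\subseteq W^\bot$ via the residue theorem (the paper cites this as Abel's theorem) applied to the meromorphic $1$-form $\xi(f)$, followed by the observation that $\rgam_{W^*}$ and $\rgam_{W^\bot}$ have isomorphic kernels and cokernels (Krichever dictionary plus Serre duality on $C$ plus Lemma~\ref{lem:WandWbot}), which upgrades the inclusion to an equality. Your final paragraph merely spells out in detail the dimension count and diagram chase that the paper compresses into its last sentence.
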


\begin{proof}
Let $W'$ be the point of the Grassmannian corresponding 
to $(C,p,z,F^*\tensor K_C,\beta^*)$. 
Note that 
$$
\index(\rgam_W) = \dim H^0(C,F)-\dim H^1(C,F)
= -\index(\rgam_{W'}).
$$
Take an arbitrary $f\in H^0(C\setminus\{p\},F)$ and
$g \in  H^0(C\setminus \{p\}, F^*\tensor K_C)$. Then
$$
g\cdot f \in H^0(C\setminus\{p\},K_C).
$$
From (\ref{eq:beta*}) we see that
$\beta^*(g)\cdot \beta(f) = \alpha(g\cdot f)$.
Abel's theorem tells us that the sum of the residues of the meromorphic
$1$-form 
$g\cdot f$ on $C$ is $0$. Since $g\cdot f$ is holomorphic everywhere
on $C$ except for $p$, we have
$$
0 = \res_p(g\cdot f) = \langle \beta^*(g), \beta(f)\rangle_n.
$$
Therefore, $W'\subset W^\bot$. Since $\rgam_{W'}$ and 
$\rgam_{W^\bot}$ have the isomorphic
kernels and cokernels, we conclude that $W'=W^\bot$. 
\end{proof}

The ring of \emph{ordinary differential operators} is defined 
as
\begin{equation}
\label{eq:differential}
\mathcal{D}= \mathbb{C}[[x]] \left[\frac{d}{dx}\right].
\end{equation}
Extending the powers of the differentiation to negative integers,
we define the ring of \emph{pseudo-differential operators}
by
\begin{equation}
\label{eq:pseudo}
\mathcal{E} = \mathcal{D}+ \mathbb{C}[[x]] \left[\left[
\left(\frac{d}{dx}\right)^{-1}\right]\right].
\end{equation}
Let $\mathcal{E}x$ be the left maximal ideal of 
$\mathcal{E}$ generated by $x$. Then $\mathcal{E}/\mathcal{E}x$
is a left $\mathcal{E}$-module. Let us denote
$\partial = d/dx$. As a $\mathbb{C}$-vector space we identify
\begin{equation}
\label{eq:EEx}
\mathcal{E}/\mathcal{E}x = \mathbb{C} ((\partial^{-1})).
\end{equation}
Two useful formulas for calculating
pseudo-differential operators are
\begin{equation}
\label{eq:formula1}
\partial^n\cdot a(x) = \sum_{i\ge 0} \binom{n}{i}
a^{(i)}(x)\partial^{n-i},
\end{equation}
where $a^{(i)}(x)$ is the $i$-th derivative of $a(x)$, and
\begin{equation}
\label{eq:formula2}
a(x)\cdot \partial^n = \sum_{i\ge 0} (-1)^i \binom{n}{i}
\partial^{n-i}\cdot a^{(i)}(x).
\end{equation}
A pseudo-differential operator has thus two expressions
\begin{align*}
P = \sum_{n\in \mathbb{Z}} a_n(x) \partial^n
&= \sum_{n\in \mathbb{Z}} \sum_{i\ge 0} (-1)^i\binom{n}{i} \partial^{n-i}\cdot a_n ^{(i)}
(x)\\
&= \sum_{m\in \mathbb{Z}} \sum_{i\ge 0} (-1)^i \binom{m+i}{i}\partial^m\cdot
a_{m+i} ^{(i)}(x).
\end{align*}
The natural projection $\mathcal{E}\rightarrow \mathcal{E}/\mathcal{E}x$ is given by
\begin{multline}
\label{eq:rho}
\rho:\mathcal{E}\owns P = \sum_n a_n(x) \partial^n\\
\longmapsto \sum_m \sum_{i\ge 0} (-1)^i\binom{m+i}{i} 
a_{m+i} ^{(i)}(0)\partial^{m} \in 
\mathbb{C} ((\partial^{-1})).
\end{multline}
The relation to the Grassmannian comes from the identification
\begin{equation}
\label{eq:identification}
\mathbb{C}((z))\owns f(z)\longmapsto
f(\partial^{-1})\cdot \partial^{-1}\in
\mathbb{C} ((\partial^{-1})).
\end{equation}
Then $\mathbb{C}((z))$ becomes an $\mathcal{E}$-module. 
The action of $P\in\mathcal{E}$ on $f(z)\in \mathbb{C}((z))$
is given by the formula
$$
P\cdot f(z) = \rho(P \cdot f(\partial^{-1})\partial^{-1})
\in \mathbb{C}((z)).
$$

\begin{Def}[Adjoint] 
The \emph{adjoint} of $P=\sum_\ell a_\ell (x)\partial ^\ell$
is defined by
$$
P^* = \sum_\ell \partial^\ell\cdot  a_\ell (-x).
$$
\end{Def}

\begin{rem}
Our adjoint is slightly different from the 
 \emph{formal adjoint} of \cite{JM, T}, which  is defined to be
$\sum_\ell (-\partial)^\ell\cdot  a_\ell (x)$. This is due to 
the fact that we are considering the action of pseudo-differential
operators on the function space $\mathbb{C}((z))$ through
 \emph{Fourier transform}. Thus $\partial$
 acts as the multiplication of $z^{-1}$, 
 and $x$ acts as the differentiation.
\end{rem}

Let us compute the $(m,n)$-matrix entry of $P$. We find
\begin{equation*}
\begin{aligned}
&\langle z^m, P\cdot z^n\rangle\\
&= \left\langle z^m , \rho\left(\sum_\ell a_\ell(x) \partial^{\ell -n-1}\right)\right\rangle\\
&=\left\langle z^m , \rho\left(\sum_\ell \sum_i (-1)^i\binom{\ell-n-1}{i}
\partial^{\ell -n-1-i}\cdot a_\ell ^{(i)}(x)\right)\right\rangle\\
&=\left\langle z^m , \sum_\ell \sum_i (-1)^i\binom{\ell-n-1}{i}
 a_\ell ^{(i)}(0)\cdot z^{-\ell +n+i}\right\rangle\\
 &= \sum_i (-1)^i \binom{m+i}{i}a_{m+n+i+1} ^{(i)}(0).
\end{aligned}
\end{equation*}
Similarly, we find
\begin{equation*}
\begin{aligned}
&\langle z^n, P^*\cdot z^m\rangle\\
&= \left\langle z^n , \rho\left(\sum_\ell \partial^\ell a_\ell(-x) 
\cdot \partial^{-m-1}\right)\right\rangle\\
&=\left\langle z^n , \rho\left(\sum_\ell \sum_i (-1)^{i}(-1)^i
\binom{-m-1}{i}
\partial^{\ell -m-1-i}\cdot a_\ell ^{(i)}(-x)\right)\right\rangle\\
&=\left\langle z^n , \sum_\ell \sum_i \binom{-m-1}{i}
 a_\ell ^{(i)}(0)\cdot z^{-\ell +m+i}\right\rangle\\
 &= \sum_i  \binom{-m-1}{i}a_{m+n+i+1} ^{(i)}(0).
\end{aligned}
\end{equation*}

Following \cite{LM2}, we introduce the ring of 
differential and pseudo-differential
operators with matrix coefficients and denote them
by $gl_n(\mathcal{D})$ and
$gl_n(\mathcal{E})$, respectively. The adjoint of a matrix pseudo-differential
operator is defined in an obvious way:
\begin{equation}
\label{eq:matrixadjoint}
P = [P_{ij}]_{i,j=1,\dots,n} \Longrightarrow
P^* \overset{\text{def}}{=} [P^* _{ji}]_{i,j=1,\dots,n}.
\end{equation}

\begin{prop}
\label{prop:adjointformula}
For arbitrary $f(z), g(z)\in \mathbb{C}((z))^{\dsum n}$ and $P\in gl_n(\mathcal{E})$
we have the adjoint formula
$$
\langle f(z), P\cdot g(z)\rangle_n = \langle P^*\cdot f(z), g(z)\rangle_n.
$$
\end{prop}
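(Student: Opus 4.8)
The plan is to reduce the matrix statement to the scalar case and then to verify the scalar identity on the monomial basis, where the two computations immediately preceding the proposition already do essentially all the work.

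First I would unwind the definitions. Writing $P = [P_{ij}]$ and expanding the $n$-component pairing (\ref{eq:pairingn}), one has $\langle f, P\cdot g\rangle_n = \sum_{i,j}\langle f_i, P_{ij}\cdot g_j\rangle$. On the other hand, since by (\ref{eq:matrixadjoint}) the $(i,j)$-entry of $P^*$ is the scalar adjoint of the $(j,i)$-entry of $P$, we get $\langle P^*\cdot f, g\rangle_n = \sum_{i,j}\langle (P_{ji})^*\cdot f_j, g_i\rangle = \sum_{i,j}\langle (P_{ij})^*\cdot f_i, g_j\rangle$ after relabeling the summation indices. Hence the whole proposition follows term-by-term once we establish the scalar identity $\langle a, P\cdot a'\rangle = \langle P^*\cdot a, a'\rangle$ for a scalar $P\in\mathcal{E}$ and $a,a'\in\mathbb{C}((z))$.

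For the scalar identity I would first prove it on the monomials $a = z^m$, $a' = z^n$. The two displayed computations just before the proposition give $\langle z^m, P\cdot z^n\rangle = \sum_i (-1)^i \binom{m+i}{i} a_{m+n+i+1}^{(i)}(0)$ and $\langle z^n, P^*\cdot z^m\rangle = \sum_i \binom{-m-1}{i} a_{m+n+i+1}^{(i)}(0)$ (both finite sums, since $P$ has order bounded above). The elementary binomial identity $\binom{-m-1}{i} = (-1)^i\binom{m+i}{i}$ makes these two expressions literally equal, and together with the symmetry $\langle b, c\rangle = \langle c, b\rangle$ of the residue pairing (\ref{eq:pairing}) this yields $\langle z^m, P\cdot z^n\rangle = \langle P^*\cdot z^m, z^n\rangle$, which is exactly the scalar identity on monomials.

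Finally I would extend from monomials to arbitrary Laurent series. Both sides are bilinear, so what remains is a convergence and continuity argument in the $z$-adic topology of Remark~\ref{rem:topology}: a pseudo-differential operator of order $\le N$ sends $z^n$ to a series of order $\ge n-N$, so $P\cdot a'$ converges, while the pairing $\langle a,\cdot\rangle$ is continuous because $a$ has only finitely many negative powers of $z$. This legitimizes interchanging the convergent sums, and the monomial identity then propagates to general $a,a'$, and thence to the matrix case. I expect this last bookkeeping — verifying that every infinite sum in sight converges and that the pairing is continuous, so that term-by-term evaluation is valid — to be the only genuinely delicate point, since the algebraic core of the statement is already encapsulated in the single binomial identity above.
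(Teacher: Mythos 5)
Your proposal is correct and follows essentially the same route as the paper's own proof, which simply invokes the two monomial computations preceding the proposition, the binomial identity $(-1)^i\binom{m+i}{i}=\binom{-m-1}{i}$, and ``the usual matrix transposition.'' The only addition is your explicit bilinearity/continuity step extending from monomials $z^m$, $z^n$ to general Laurent series, which the paper leaves implicit; this is sound (and in fact the relevant double sums are finite, since an operator of order $\le N$ sends $z^n$ into $z^{n-N}\mathbb{C}[[z]]$), so it is a welcome but not essentially different elaboration.
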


\begin{proof}
The statement follows from the above computations, noting the identity
$$
(-1)^i \binom{m+i}{i} = \binom{-m-1}{i},
$$
and  the usual matrix transposition. 
\end{proof}

A more direct relation between pseudo-differential operators
and the Sato Grassmannian is that the identification of the
\emph{big cell} of $Gr_n(\mathbb{C})$ with the group 
\begin{equation}
\label{eq:groupofpdo}
G = I_n + gl_n(\mathbb{C}[[x]]) [[\partial^{-1}]]\cdot \partial^{-1}
\end{equation}
of monic pseudo-differential operators of order $0$ (\cite{
LM2}, Theorem~6.5).

\begin{thm}
\label{thm:bigcell}
Let $W\in Gr_n(\mathbb{C})$ be in the big cell, and correspond 
to a pseudo-differential operator $S\in G$. Then $W^\bot$ 
corresponds to $(S^* )^{-1}\in G$. 
\end{thm}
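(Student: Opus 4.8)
The plan is to reduce the claim to two facts about the residue pairing and the explicit parametrization (\ref{eq:groupofpdo}) of the big cell: that the \emph{vacuum} is self-dual, and that the operation $(\,\cdot\,)^{\bot}$ converts the action of an operator into the action of its adjoint. Recall from \cite{LM2} that under the identification (\ref{eq:groupofpdo}) the vacuum $W_0=(z^{-1}\mathbb{C}[z^{-1}])^{\dsum n}$ corresponds to $S=I_n$, and a general big-cell point is recovered as $W=S^{-1}\cdot W_0$, where $S^{-1}\in gl_n(\mathcal{E})$ acts through the module structure (\ref{eq:identification}) on $\mathbb{C}((z))^{\dsum n}$. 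Granting this, the whole argument rests on Proposition~\ref{prop:adjointformula} together with the self-duality of $W_0$.

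First I would verify $W_0^{\bot}=W_0$ by a direct component-by-component computation: an element $f_i\in\mathbb{C}((z))$ annihilates every $z^{-j}$ with $j\ge 1$ precisely when the coefficient of $z^{m}$ in $f_i$ vanishes for all $m\ge 0$, i.e. when $f_i\in z^{-1}\mathbb{C}[z^{-1}]$. Hence $W_0^{\bot}=W_0$. This is consistent with Lemma~\ref{lem:WandWbot}: since $W_0$ lies in the index~$0$ big cell (so $\rgam_{W_0}$ is an isomorphism and both its kernel and cokernel vanish), $W_0^{\bot}$ again has index $0$ and its $\rgam$ is an isomorphism.

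Next, for $W=S^{-1}\cdot W_0$ I would transport the complement through the group action using Proposition~\ref{prop:adjointformula}. For $f\in\mathbb{C}((z))^{\dsum n}$ one has $f\in W^{\bot}$ iff $\langle f, S^{-1}w\rangle_n=0$ for all $w\in W_0$, iff $\langle (S^{-1})^{*}f, w\rangle_n=0$ for all $w\in W_0$, iff $(S^{-1})^{*}f\in W_0^{\bot}=W_0$. Thus $W^{\bot}=\big((S^{-1})^{*}\big)^{-1}\cdot W_0$. It then remains to identify $(S^{-1})^{*}=(S^{*})^{-1}$: the characterizing identity of Proposition~\ref{prop:adjointformula} together with associativity of the module action forces $(PQ)^{*}=Q^{*}P^{*}$ on $gl_n(\mathcal{E})$, so the adjoint is an anti-automorphism and $(S^{-1})^{*}(S)^{*}=(SS^{-1})^{*}=I_n$. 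Therefore $W^{\bot}=S^{*}\cdot W_0$, and since $V\cdot W_0$ corresponds to $V^{-1}\in G$ under (\ref{eq:groupofpdo}), the point $W^{\bot}$ corresponds to $(S^{*})^{-1}$, as claimed. Note that both $S^{*}$ and $(S^{*})^{-1}$ lie in $G$, since the adjoint fixes the leading term $I_n$ and lowers order and $G$ is a group.

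The main obstacle I anticipate is not any deep point but the bookkeeping of the parametrization: one must fix the precise convention of \cite{LM2} (whether $W=S^{-1}W_0$ or $W=S\,W_0$, and that the vacuum is the strictly-polar $(z^{-1}\mathbb{C}[z^{-1}])^{\dsum n}$ rather than $(\mathbb{C}[z^{-1}])^{\dsum n}$, which would shift the index by $n$), and verify that the infinite-order operator $S^{-1}$ genuinely carries $W_0$ to a big-cell point in the $z$-adic topology of Remark~\ref{rem:topology}, so that the formal manipulations above are legitimate. Reassuringly, the same final answer $(S^{*})^{-1}$ emerges under either convention, so the conclusion is robust; the only genuinely essential inputs are the self-duality of the vacuum and the adjoint formula of Proposition~\ref{prop:adjointformula}.
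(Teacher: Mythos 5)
Your proof is correct and takes essentially the same route as the paper: both rest on the parametrization $W=S^{-1}\cdot\mathbb{C}[z^{-1}]^{\dsum n}\cdot z^{-1}$ of the big cell and on Proposition~\ref{prop:adjointformula}. In fact your version is slightly more complete — the paper only verifies $\langle S^*\cdot f, S^{-1}\cdot g\rangle_n=0$, i.e.\ the inclusion $S^*\cdot W_0\subset W^\bot$, whereas you pin down the equality by checking $W_0^\bot=W_0$ and that the adjoint is an anti-automorphism.
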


\begin{proof}
The correspondence of $S\in G$ and $W$ in the big cell is
the equation
$$
W = S^{-1}\cdot \mathbb{C}[z^{-1}]^{\dsum n}\cdot z^{-1}.
$$
For $f(z),g(z)\in \mathbb{C}[z^{-1}]^{\dsum n}\cdot z^{-1}$, we have
$$
\langle S^*\cdot f(z),S^{-1}\cdot g(z)\rangle_n
= \langle f(z),SS^{-1}\cdot g(z)\rangle_n = 0.
$$
\end{proof}

The \emph{Lax operator} is defined by
\begin{equation}
\label{eq:Laxoperator}
\mathbf{L} = S\cdot I_n\cdot \partial\cdot S^{-1}.
\end{equation}
Lax operators bijectively correspond to points of the big cell
of the quotient Grassmannian $Z_n(\mathbb{C})$. 
If $\overline{W}$ corresponds to $\mathbf{L}$, then
$\overline{W}^\bot$ corresponds to $\mathbf{L}^*$.
Therefore, the Serre duality indeed corresponds to the
adjoint action of the pseudo-differential operators.

\section{The Hitchin integrable system for $Sp_{2m}$ and
corresponding KP-type equations}
\label{sect:Sp}

In this section we identify the KP-type equations that are equivalent to the
Hitchin integrable system for $Sp_{2m}$. Since the 
moduli space of the $Sp_{2m}$-Higgs bundles is identified
as the fixed-point-set of the Serre duality involution on
$\mathcal{H}_C(2m,2m(g-1))$ (Proposition~\ref{prop:SerreandSp}), 
we will see that the evolution equations are the KP equations
restricted to a certain type of self-adjoint Lax operators.

Let $(E,\phi) = (E^*\tensor K_C, -\phi^*)$ be a fixed point
of the Serre duality operation on the Higgs moduli space
$\mathcal{H}_C(2m,2m(g-1))$, and $(\mathcal{L}_E,C_s)$ the
corresponding spectral data. We choose a point $p\in C$ and 
a local coordinate $z$ as in Section~\ref{sect:KP}. We assume
that the spectral cover $\pi:C_s\rightarrow C$ is unramified at $p$.
This time we choose a local trivialization
$$
\beta: \mathcal{L}_E |_{\hat{C}_{s,\pi^{-1}(p)}}
\overset{\sim}{\longrightarrow} \mathcal{O}_{\hat{C}_{s,\pi^{-1}(p)}}
\isom \mathbb{C}[[z]]^{\dsum 2m}.
$$
Then the Serre dual of the data $(\pi_s:C_s \rightarrow C,p,z,
\mathcal{L}_E,\phi,\beta)$ is 
$$
(\pi_{s^*}:C_{s^*} \rightarrow C,p,z,\epsilon^*(\mathcal{L}_E ^*\tensor K_{C_s}), -\phi^*, \epsilon^*\beta^*).
$$
In general the dual trivialization is defined by
$$
\begin{CD}
\mathcal{L}_E |_{\hat{C}_{s,\pi_s ^{-1}(p)}}
@>{\xi}>> K_{C_s} |_{\hat{C}_{s,\pi_s ^{-1}(p)}}\\
@V{\beta}V{\wr}V @V{\wr}V{\alpha}V\\
\mathcal{O}_{\hat{C}_{s,\pi_s ^{-1}(p)}}
@>{\beta^*(\xi)}>> 
\mathcal{O}_{\hat{C}_{s,\pi_s ^{-1}(p)}}\\
@V{\epsilon}V{\wr}V @V{\wr}V{\epsilon}V\\
\mathcal{O}_{\hat{C}_{s^*,\pi_{s^*} ^{-1}(p)}}
@>{\epsilon^*\beta^*(\xi)}>> 
\mathcal{O}_{\hat{C}_{s^*,\pi_{s^*} ^{-1}(p)}}.
\end{CD}
$$
Note that in the current case we have $s=s^*$ and
$\epsilon : C_s\rightarrow C_s$ is a non-trivial involution. 
Since $\epsilon$ commutes with $\pi$, it induces
an involution of the set $\pi^{-1}(p) = \{p_1,\dots,p_{2m}\}$.
Let us number the $2m$ distinct points so that 
$$
\epsilon (p_1,\dots,p_{2m}) = (p_1,\dots,p_{2m})\cdot A,
$$
where 
\begin{equation}
\label{eq:matrixA}
A = \begin{bmatrix}
0&I_m\\
I_m &0
\end{bmatrix}.
\end{equation}
By the same argument we used in Theorem~\ref{thm:Serre=bot},
if we
define 
$$
W = \beta(H^0(C_s\setminus \pi^{-1}(p),\mathcal{L}_E))
\subset \mathbb{C}((z))^{\dsum 2m},
$$
then we have
\begin{equation}
\label{eq:WbotinSp}
AW^\bot = \epsilon^*\beta^*(H^0(C_{s^*}\setminus \pi^{-1}(p),
\epsilon^*(\mathcal{L}^* _E\tensor K_{C_s})))
\subset \mathbb{C}((z))^{\dsum 2m}.
\end{equation}

Let us assume that $E$ is not on the theta divisor of 
\cite{BNR} so that $H^0(C,E) = H^1(C,E)=0$. 
Then $W\in Gr_{2m}(\mathbb{C})$ is on the big cell,
and hence corresponds to a monic $0$-th order pseudo-differential
operator $S\in G$.  Since $W=AW^\bot$, we have 
\begin{equation}
\label{eq:twistedselfadjoint}
S=A\cdot (S^*)^{-1}
\cdot A. 
\end{equation}
The Lax operator 
$$
\mathbf{L} = S\cdot I_{2m}\cdot \partial \cdot S^{-1}
=
\begin{bmatrix}
\mathbf{L}_1&\mathbf{L}_2\\
\mathbf{L}_3&\mathbf{L}_4
\end{bmatrix}
$$
then satisfies that 
\begin{equation}
\label{eq:Laxadjoint}
\mathbf{L}^* = A\cdot \mathbf{L}\cdot A,
\end{equation}
or equivalently
$$
\mathbf{L}_1 ^* = \mathbf{L}_4, 
\qquad 
\mathbf{L}_2 ^* = \mathbf{L}_2, 
\qquad
\mathbf{L}_3 ^* = \mathbf{L}_3.
$$
The time evolution of $S$ and the Lax operator $\mathbf{L}$ is 
given by the formula established in Section~6 of \cite{LM2}.
Let $S(t)$ be the solution of the $n$-component KP equations
with the initial data $S(0) = S$. Then $S(t)$ is given by the
generalized Birkhoff decomposition of \cite{LM2, M1988}
\begin{equation}
\label{eq:birkhoff}
\exp\left(\begin{bmatrix}
D_1\\
&D_2
\end{bmatrix}\right)\cdot S(0)^{-1} = 
S(t)^{-1}\cdot Y(t),
\end{equation}
where $D_1$ and $D_2$ are diagonal matrices of the shape
$$
D_i = \begin{bmatrix}
\sum_{j\ge 1}t_{ij1} \partial^j\\
&\ddots\\
&&\sum_{j\ge 1}t_{ijm} \partial^j
\end{bmatrix}
$$
corresponding to (\ref{eq:nKP}), and $Y(t)$ is an invertible 
infinite order
differential operator introduced in \cite{M1988}. 
We impose  that the time evolution $S(t)$ satisfies 
the same twisted self-adjoint condition (\ref{eq:twistedselfadjoint}). 
Applying the adjoint-inverse operation and conjugation by 
$A$ of (\ref{eq:matrixA}) to (\ref{eq:birkhoff}), we obtain
$$
\exp\left(A\begin{bmatrix}
-D_1\\
&-D_2
\end{bmatrix}A\right)\cdot S(0)^{-1} = 
S(t)^{-1}\cdot (A (Y(t)^*)^{-1}A).
$$
Therefore, the time evolution (\ref{eq:birkhoff}) 
with the condition $D_2=-D_1$ preserves the twisted self-adjointness
(\ref{eq:twistedselfadjoint}). We have thus established

\begin{thm}
\label{thm:SpHitchinandKP}
The KP-type equations that generate the Hitchin integrable
systems on the moduli spaces of $Sp_{2m}$-Higgs bundles
are the reduction of the $2m$-component KP equations 
that preserve the twisted self-adjointness $(\ref{eq:twistedselfadjoint})$
for the operator $S$, or $(\ref{eq:Laxadjoint})$ for the 
Lax operator $\mathbf{L}$. The time evolution $S\longmapsto S(t)$
preserving the condition is given by
the generalized Birkhoff decomposition
\begin{equation}
\label{eq:SpKP}
\exp\left(\begin{bmatrix}
D\\
&-D
\end{bmatrix}\right)\cdot S(0)^{-1} = 
S(t)^{-1}\cdot Y(t),
\end{equation}
where 
$$
D = \begin{bmatrix}
\sum_{j\ge 1}t_{j1} \partial^j\\
&\ddots\\
&&\sum_{j\ge 1}t_{jm} \partial^j
\end{bmatrix}.
$$
\end{thm}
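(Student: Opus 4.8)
The plan is to carry out the whole argument on the big cell, where the point $\overline{W}$ attached to the spectral data of a Serre fixed point $(E,\phi)=(E^*\tensor K_C,-\phi^*)$ is encoded by a monic zeroth-order pseudo-differential operator $S\in G$. First I would assemble the dictionary built in the previous sections: by Proposition~\ref{prop:SerreandSp} the $Sp_{2m}$-Higgs moduli space is the fixed-point-set of the Serre involution; by Theorem~\ref{thm:Serre=bot} that involution is $W\mapsto W^\bot$; and by Theorem~\ref{thm:bigcell} the latter is $S\mapsto (S^*)^{-1}$ on the big cell. Combined with the block-permutation $A$ of (\ref{eq:matrixA}) recording the action of $\epsilon$ on $\pi^{-1}(p)$, the fixed-point condition $W=AW^\bot$ becomes the twisted self-adjointness (\ref{eq:twistedselfadjoint}), $S=A(S^*)^{-1}A$. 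A short computation using that the adjoint of this paper satisfies $\partial^*=\partial$, together with $A^*=A$ and $A^2=I_{2m}$, transports this to the Lax operator $\mathbf{L}=S\,I_{2m}\,\partial\,S^{-1}$ in the form (\ref{eq:Laxadjoint}), $\mathbf{L}^*=A\,\mathbf{L}\,A$. The remaining task is purely to decide which KP times preserve (\ref{eq:twistedselfadjoint}) along the flow.

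The heart of the proof is to apply the operation $S\mapsto A(S^*)^{-1}A$ to the generalized Birkhoff factorization (\ref{eq:birkhoff}) governing the $2m$-component KP evolution. I would take the formal adjoint of (\ref{eq:birkhoff}), using Proposition~\ref{prop:adjointformula} so that $(PQ)^*=Q^*P^*$, then invert, and finally conjugate by $A$. Two features make this clean: each diagonal time-operator $D_i$ has scalar (that is, $x$-independent) coefficients and is therefore self-adjoint, so the exponential factor is unchanged under $*$; and conjugation by the constant involution $A$ merely \emph{interchanges} the two diagonal blocks. Carrying this through, and using the initial condition to rewrite $A\,S(0)^*\,A=S(0)^{-1}$, turns (\ref{eq:birkhoff}) into a factorization of the \emph{same} shape but with the pair of time-operators $(-D_2,-D_1)$ in place of $(D_1,D_2)$, with $S(t)^{-1}$ replaced by $A\,S(t)^*\,A$ and $Y(t)$ by $A\,(Y(t)^*)^{-1}\,A$.

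I would then invoke uniqueness of the generalized Birkhoff decomposition of \cite{LM2, M1988}. For this I must check that the transformed factors remain of the correct type: $A\,S(t)^*\,A$ should again lie in $G$, and $A\,(Y(t)^*)^{-1}\,A$ should remain an invertible infinite-order differential operator of the kind the decomposition produces. Granting this, uniqueness identifies $A(S(t)^*)^{-1}A$ with the solution of the $2m$-component KP flow from the same initial datum $S(0)$ but with time-data $(-D_2,-D_1)$. Preserving the twisted self-adjointness (\ref{eq:twistedselfadjoint}) along the flow means $S(t)=A(S(t)^*)^{-1}A$, i.e.\ that these two flows agree; since they issue from the common point $S(0)$ and the KP flows in the independent time-directions separate, this holds exactly when the two time-data coincide, $(D_1,D_2)=(-D_2,-D_1)$, that is $D_2=-D_1$. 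Imposing this relation and writing $D=D_1$ yields (\ref{eq:SpKP}), and by construction the resulting orbit is precisely the $Sp_{2m}$-Hitchin fibration sitting inside the $GL$-orbit of Theorem~\ref{thm:embedding1}.

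I expect the main obstacle to be the bookkeeping in the uniqueness step rather than the algebra of the adjoint: one must verify that the adjoint-inverse-conjugate operation preserves the splitting of $gl_{2m}(\mathcal{E})$ into its negative pseudo-differential part and its positive infinite-order differential part underlying the Birkhoff factorization, and that KP flows with different time-data genuinely separate, so that matching the two flows truly forces the identity of times. The sign reversal, $D_2=-D_1$ rather than $D_2=D_1$, is the one place where the twisting by $A$ together with the inverse in $S\mapsto A(S^*)^{-1}A$ must be tracked with care.
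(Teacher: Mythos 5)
Your proposal follows essentially the same route as the paper: assemble the dictionary (Serre fixed points $\leftrightarrow$ $W=AW^\bot$ $\leftrightarrow$ $S=A(S^*)^{-1}A$), then apply the adjoint--inverse--conjugate operation to the generalized Birkhoff factorization (\ref{eq:birkhoff}), use that the constant-coefficient time operators are self-adjoint and that conjugation by $A$ swaps the diagonal blocks, and conclude $D_2=-D_1$. The only difference is that you make explicit the appeal to uniqueness of the Birkhoff decomposition, which the paper leaves implicit; this is a faithful and correct rendering of the argument.
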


\begin{rem}
Since the time evolution of (\ref{eq:SpKP}) is given by 
a traceless matrix $\text{diag}(D,-D)$, every finite-dimensional
orbit of this system is a Prym variety by the general theory
of \cite{LM2}, which is expected from the $Sp$ Hitchin fibration.
\end{rem}

\bigskip

%Bibliography

\end{document}